\documentclass[a4paper, 11pt]{amsart}
\usepackage[utf8]{inputenc}
\usepackage[T1]{fontenc}
\usepackage[english]{babel}
\usepackage{graphicx}
\usepackage{stmaryrd}
\usepackage{tikz}
\usepackage{tikz-cd}
\usepackage[all]{xy}
\usepackage{comment}
\usepackage[a4paper]{geometry}
\geometry{hmargin=3.5cm,vmargin=2.5cm,}
\usepackage{amsmath,amsfonts,amssymb,amsthm,epsfig,epstopdf,url,array}
\usepackage{rotating}
\usepackage{cleveref}
\usepackage{nameref}
\usepackage{enumitem}
\usepackage{comment}
\Crefname{paragraph}{Section}{Sections}
\setcounter{tocdepth}{2}
\setcounter{secnumdepth}{5}
\usepackage{fancyhdr}

\newcommand{\ensemblenombre}[1]{\mathbb{#1}}
\newcommand{\N}{\ensemblenombre{N}}

\newcommand{\R}{} 
\renewcommand{\R}{\ensemblenombre{R}}
\newcommand{\C}{\ensemblenombre{C}}

\newcommand{\abs}[1]{\left\lvert#1\right\rvert}
\newcommand{\norme}[1]{\left\lVert#1\right\rVert}


\theoremstyle{plain} 
\newtheorem{prop}{Proposition}[section] 
\newtheorem{theo}[prop]{Theorem}
\newtheorem{defprop}[prop]{Definition-Proposition}
\newtheorem{lem}[prop]{Lemma}
\newtheorem{cor}[prop]{Corollary}
\theoremstyle{definition}
\newtheorem{defi}[prop]{Definition}
\newtheorem{rmk}[prop]{Remark}
\newtheorem{app}[prop]{Application}
\newtheorem{claim}[prop]{Fact}

\makeatletter
\let\original@addcontentsline\addcontentsline
\newcommand{\dummy@addcontentsline}[3]{}
\newcommand{\DeactivateToc}{\let\addcontentsline\dummy@addcontentsline}
\newcommand{\ActivateToc}{\let\addcontentsline\original@addcontentsline}
\makeatother

\pagestyle{plain}

\begin{document}

\title{Null-controllability of two species reaction-diffusion system with nonlinear coupling: a new duality method}
\author{Kévin Le Balc'h}
\maketitle
\begin{abstract} 
We consider a $2\times2$ nonlinear reaction-diffusion system posed on a smooth bounded domain $\Omega$ of $\R^N$ ($N \geq 1$). The control input is in the source term of only one equation. It is localized in some arbitrary nonempty open subset $\omega$ of the domain $\Omega$. First, we prove a global null-controllability result when the coupling term in the second equation is an odd power.   As the linearized system around zero is not null-controllable, the usual strategy consists in using the return method, introduced by Jean-Michel Coron, or the method of power series expansions. In this paper, we give a direct nonlinear proof, which relies on a new duality method that we call Reflexive Uniqueness Method. It is a variation in reflexive Banach spaces of the well-known Hilbert Uniqueness Method, introduced by Jacques-Louis Lions. It is based on Carleman estimates in $L^p$ ($2 \leq p < \infty$) obtained from the usual Carleman inequality in $L^2$ and parabolic regularity arguments. This strategy enables us to find a control of the heat equation, which is an odd power of a regular function. Another advantage of the method is to produce small controls for small initial data. Secondly, thanks to the return method, we also prove a null-controllability result for more general nonlinear reaction-diffusion systems, where the coupling term in the second equation behaves as an odd power at zero.
\end{abstract}
\small
\tableofcontents
\normalsize
\section{Introduction}\label{intro}

Let $T > 0$, $N \in \N^*$, $\Omega$ be a bounded, connected, open subset of $\R^N$ of class $C^2$, and let $\omega$ be a nonempty open subset of $\Omega$. \\
\indent We consider a $2\times 2$ nonlinear reaction-diffusion system with one internal control:
\begin{equation*}
\left\{
\begin{array}{l l}
\partial_t u -  \Delta u =  f_1(u,v) + h 1_{\omega} &\mathrm{in}\ (0,T)\times\Omega,\\
\partial_t v -  \Delta v = f_2(u,v)  &\mathrm{in}\ (0,T)\times\Omega,\\
u,v= 0&\mathrm{on}\ (0,T)\times\partial\Omega,\\
(u,v)(0,.)=(u_0,v_0)& \mathrm{in}\ \Omega,
\end{array}
\right.
\tag{NL}
\label{systemef1f2}
\end{equation*}
with $f_1, f_2 \in C^{\infty}(\R^2;\R)$ satisfying $f_1(0,0)=f_2(0,0)=0$. Here,  $(u,v)(t,.) : \Omega \rightarrow \R^2$ is the \textit{state} to be controlled, $h = h(t,.) : \Omega \rightarrow \R$ is the \textit{control input} supported in $\omega$.\\
\indent We are interested in the \textbf{null-controllability} of \eqref{systemef1f2}: for any initial data $(u_0,v_0)$, does there exist a control $h$ such that the solution $(u,v)$ of \eqref{systemef1f2} verifies $(u,v)(T,.)=(0,0)$?

\subsection{Context}
The problem of null-controllability of the heat equation was solved independently by Gilles Lebeau, Luc Robbiano in $1995$ (see \cite{LR} or the survey \cite{LLR}) and Andrei Fursikov, Oleg Imanuvilov in $1996$ (see \cite{FI}) with Carleman estimates.
\begin{theo}\cite[Corollary 2]{AKBGBT}\\
For every $u_0 \in L^2(\Omega)$, there exists $h \in L^2((0,T)\times\Omega)$ such that the solution $u$ of 
\[
\left\{
\begin{array}{l l}
\partial_t u-  \Delta u =  h 1_{\omega} &\mathrm{in}\ (0,T)\times\Omega,\\
 u = 0 &\mathrm{on}\ (0,T)\times\partial\Omega,\\
 u(0,.)=u_0  &\mathrm{in}\ \Omega,
\end{array}
\right.
\]
satisfies $u(T,.)=0$.
\end{theo}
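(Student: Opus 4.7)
The strategy I would use is the classical duality approach of Fursikov--Imanuvilov. By the Hilbert Uniqueness Method, null-controllability is equivalent to the following observability inequality: there exists $C > 0$ such that, for every $\varphi_T \in L^2(\Omega)$, the solution $\varphi$ of the backward adjoint heat equation
\[
-\partial_t \varphi - \Delta \varphi = 0 \text{ in } (0,T) \times \Omega, \quad \varphi = 0 \text{ on } (0,T) \times \partial\Omega, \quad \varphi(T,\cdot) = \varphi_T,
\]
satisfies
\[
\|\varphi(0,\cdot)\|_{L^2(\Omega)}^2 \leq C \int_0^T \!\!\int_\omega |\varphi|^2 \, dx \, dt.
\]
So the first step is to verify this equivalence via a standard functional-analytic duality argument, which reduces the problem to proving the observability inequality.

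To prove observability, I would establish a global parabolic Carleman estimate. Following Fursikov--Imanuvilov, pick $\omega_0 \Subset \omega$ and a function $\eta_0 \in C^2(\overline\Omega)$ with $\eta_0 > 0$ in $\Omega$, $\eta_0 = 0$ on $\partial\Omega$, and $|\nabla \eta_0| > 0$ on $\overline{\Omega \setminus \omega_0}$; then introduce the weights
\[
\alpha(t,x) = \frac{e^{2\lambda\|\eta_0\|_\infty} - e^{\lambda\eta_0(x)}}{t(T-t)}, \qquad \xi(t,x) = \frac{e^{\lambda\eta_0(x)}}{t(T-t)}.
\]
For $s,\lambda$ large enough, the target estimate
\[
\iint_{(0,T) \times \Omega} s^3\lambda^4 \xi^3 e^{-2s\alpha} |\varphi|^2 \, dx\,dt \leq C \iint_{(0,T) \times \omega} s^3\lambda^4 \xi^3 e^{-2s\alpha} |\varphi|^2 \, dx\,dt
\]
is obtained by conjugating $-\partial_t - \Delta$ with $e^{-s\alpha}$, splitting the conjugated operator $L_s$ into its symmetric and antisymmetric parts in $L^2$, expanding $\|L_s \varphi\|_{L^2}^2$ by several integrations by parts, and showing that the positive principal terms dominate all crossed terms when $s,\lambda$ are sufficiently large; a Caccioppoli-type argument then trades a gradient observation on $\omega_0$ for a zero-order observation on $\omega$.

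From the Carleman estimate, the observability inequality follows by restricting the left-hand side to $[T/4,3T/4]\times\Omega$ (where $\xi^3 e^{-2s\alpha}$ is uniformly bounded from below) and combining with the dissipation bound $\|\varphi(0,\cdot)\|_{L^2(\Omega)} \leq C\|\varphi(t,\cdot)\|_{L^2(\Omega)}$ for $t \in (T/4, 3T/4)$, which holds for the backward heat equation by an elementary energy argument. To reconstruct the control, I would minimize the penalized functional
\[
J_\varepsilon(\varphi_T) = \frac{1}{2} \int_0^T\!\!\int_\omega |\varphi|^2 \, dx\,dt + \varepsilon \|\varphi_T\|_{L^2(\Omega)} + \int_\Omega u_0 \, \varphi(0,\cdot) \, dx
\]
over $L^2(\Omega)$, use the observability inequality to obtain coercivity, extract a minimizer $\varphi_T^\varepsilon$, and pass to the limit $\varepsilon \to 0$: the weak limit produces an adjoint solution whose restriction to $\omega$ is the desired control. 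The main obstacle is unquestionably the Carleman estimate itself: the algebraic bookkeeping of all cross-terms after conjugation is lengthy, and the precise form of $\alpha$, blowing up at $t = 0$ and $t = T$ with an exponential profile in $x$, must be chosen so that every sign-indefinite term is absorbed by the dominant positive contributions $s\lambda^2 \xi|\nabla\varphi|^2$ and $s^3\lambda^4 \xi^3|\varphi|^2$ once $s$ and $\lambda$ are taken sufficiently large.
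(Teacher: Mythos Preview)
The paper does not actually prove this theorem: it is stated in the introductory ``Context'' subsection as a known result, attributed to Lebeau--Robbiano and Fursikov--Imanuvilov, and cited from the survey \cite{AKBGBT}. There is therefore no paper-proof to compare against directly.

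That said, your outline is correct and is precisely the Fursikov--Imanuvilov route the paper alludes to. In fact the paper later imports exactly this machinery for its own purposes: the $L^2$ Carleman estimate you describe appears as \Cref{carl2lem} (quoted from \cite{CGR}), and the passage from Carleman to observability via restriction to $[T/4,3T/4]$ plus $L^2$-dissipation is carried out in the proof of \Cref{carl4usefaiblecor} (there in an $L^{2k+2}$ setting). Your penalized-HUM construction of the control is likewise the variational mechanism the paper generalizes in \Cref{RUM}. So while the paper offers no proof of this particular statement, your proposal matches both the literature it cites and the toolkit it subsequently deploys.
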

\indent Then, null-controllability of linear and nonlinear coupled parabolic systems has been a challenging issue. For example, in \cite{AKBDGB}, Farid Ammar-Khodja, Assia Benabdallah, Cédric Dupaix and Manuel Gonzalez-Burgos identified sharp conditions for the control of systems of the form
\begin{equation}
\left\{
\begin{array}{l l}
\partial_t U - D \Delta U = AU + B H 1_{\omega} &\mathrm{in}\ (0,T)\times\Omega,\\
U = 0&\mathrm{on}\ (0,T)\times\partial\Omega,\\
U(0,.)=U_0& \mathrm{in}\ \Omega.
\end{array}
\right.
\label{systemekalman}
\end{equation}
where $U(t,.): \Omega \rightarrow \R^n$ is the state, $H=H(t,.) : \Omega \rightarrow \R^m$ is the control, $D:=diag(d_1,\dots,d_n)$ with $d_i \in (0,+\infty)$ is the \textit{diffusion matrix}, $A \in \mathcal{M}_n(\R)$ (matrix with $n$ lines and $n$ columns with entries in $\R$) is the \textit{coupling matrix} and $B \in \mathcal{M}_{n,m}(\R)$ (matrix with $n$ lines and $m$ columns with entries in $\R$) represents the \textit{distribution of controls}.  In general, the rank of $B$ is less than $n$ (roughly speaking, there are less controls than equations), so that the controllability of the full system depends strongly on the (linear) coupling present in the system. We can see the survey \cite{AKBGBT} for other results (and open problems) on the controllability of linear coupled parabolic problems. The introduction of the article \cite{LB} provides an overview of the results on the controllability of linear and nonlinear coupled parabolic problems.\\
\indent Roughly speaking, the null-controllability of \eqref{systemef1f2} can be reformulated as follows: how can the component $v$ be controlled thanks to the nonlinear coupling $f_2(u,v)$?
\subsection{Linearization}
We introduce the following notation which will be used throughout the paper,
$$ \forall \tau > 0,\ Q_{\tau} = (0,\tau)\times\Omega.$$
The usual strategy consists in proving a \textbf{local null-controllability result} for \eqref{systemef1f2} from a \textbf{(global) null-controllability result for the linearized system of \eqref{systemef1f2}  around} $((\overline{u},\overline{v}),\overline{h})=((0,0),0)$. The linearized system \eqref{systemelf1f2} is 
\begin{equation*}
\left\{
\renewcommand*{\arraystretch}{1.5}
\begin{array}{l l}
 \partial_t u -  \Delta u =  \frac{\partial f_1}{\partial u}(0,0) u + \frac{\partial f_1}{\partial v}(0,0) v +h 1_{\omega} &\mathrm{in}\ (0,T)\times\Omega,\\
 \partial_t v -  \Delta v =  \frac{\partial f_2}{\partial u}(0,0) u + \frac{\partial f_2}{\partial v}(0,0) v  &\mathrm{in}\ (0,T)\times\Omega,\\
u,v= 0&\mathrm{on}\ (0,T)\times\partial\Omega,\\
(u,v)(0,.)=(u_0,v_0)& \mathrm{in}\ \Omega.
\end{array}
\right.
\tag{L}
\label{systemelf1f2}
\end{equation*}
\begin{defi}\label{definullcontr}
System \eqref{systemelf1f2} is said to be \textit{null-controllable} if \\for every $(u_0,v_0) \in L^2(\Omega)^2$, there exists $h \in L^2(Q_T)$ such that the solution $(u,v)$ of \eqref{systemelf1f2} satisfies $(u,v)(T,.) = (0,0)$.
\end{defi}  
\begin{prop}\cite[Theorem 7.1]{AKBGBT}\label{couplageconstant}\\
The following statements are equivalent.
\begin{enumerate}[nosep]
\item System \eqref{systemelf1f2} is null-controllable.
\item $ \frac{\partial f_2}{\partial u}(0,0) \neq 0$.
\end{enumerate}
\end{prop}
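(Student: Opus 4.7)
I would denote $a=\partial_u f_1(0,0)$, $b=\partial_v f_1(0,0)$, $c=\partial_u f_2(0,0)$, $d=\partial_v f_2(0,0)$, so that hypothesis $(2)$ reads $c\neq 0$, and prove the two implications separately. For $(1)\Rightarrow(2)$, I argue by contraposition: if $c=0$, the second equation of \eqref{systemelf1f2} decouples entirely from $u$ and from the control, reducing to $\partial_t v-\Delta v=dv$ with homogeneous Dirichlet boundary. Taking $v_0=\phi_1$, the first Dirichlet eigenfunction of $-\Delta$ on $\Omega$ with eigenvalue $\lambda_1>0$, the explicit solution $v(t,\cdot)=e^{(d-\lambda_1)t}\phi_1$ never vanishes, so $v(T,\cdot)\neq 0$ regardless of $h$, and null-controllability fails.

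For $(2)\Rightarrow(1)$, by the classical Hilbert Uniqueness Method it is equivalent to prove the observability inequality
\[
\|\varphi(0,\cdot)\|_{L^2(\Omega)}^2+\|\psi(0,\cdot)\|_{L^2(\Omega)}^2 \leq C\int_0^T\!\!\int_\omega|\varphi|^2\,\mathrm{d}x\,\mathrm{d}t
\]
for the backward adjoint system
\[
-\partial_t\varphi-\Delta\varphi=a\varphi+c\psi,\qquad -\partial_t\psi-\Delta\psi=b\varphi+d\psi,
\]
with Dirichlet boundary conditions and final data $(\varphi_T,\psi_T)\in L^2(\Omega)^2$. I would apply the Fursikov--Imanuvilov global Carleman estimate to each scalar heat equation with observation localized in $\omega$, then sum the two resulting weighted estimates. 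The zero-order perturbations $a\varphi$, $d\psi$ and the cross term $b\varphi$ on the right-hand side are absorbed into the principal part of the estimate by fixing the Carleman parameter $s$ large enough.

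The crucial step, where the hypothesis $c\neq 0$ enters, is removing the spurious observation $\int_0^T\!\!\int_\omega|\psi|^2$ from the right-hand side. The first adjoint equation yields pointwise $c\psi=-\partial_t\varphi-\Delta\varphi-a\varphi$; fixing a subset $\omega'\subset\subset\omega$ and a cutoff $\eta\in C_c^\infty(\omega)$ with $\eta\equiv 1$ on $\omega'$, I would square this identity, multiply by $\eta^2$ times a suitable power of the Carleman weight, and integrate by parts in $x$ and $t$ so as to transfer every derivative of $\varphi$ onto $\eta$ or onto the weight. This produces, after some work, a bound of the form $C_s\int_0^T\!\!\int_\omega|\varphi|^2$ for $\int_0^T\!\!\int_{\omega'}|\psi|^2$ weighted appropriately. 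Reinjecting this bound into the summed Carleman inequality and then passing from the weighted norms at $t=T/2$ to the $L^2$ norms at $t=0$ via the standard dissipation estimate on $(0,T/2)$ yields the observability inequality, hence null-controllability.

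The main obstacle is the integration-by-parts bookkeeping in this last step: each derivative of the Carleman weight $e^{-s\alpha}\xi^k$ must be controlled by higher powers of $\xi$ with $s$-dependent constants, so that all remainder terms produced by the transfer of derivatives of $\varphi$ to the weight and cutoff are absorbed by the principal part of the combined Carleman inequality for $(\varphi,\psi)$. This cascade-style absorption is classical but technical, and it is the genuine core of the proof.
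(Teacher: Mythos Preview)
Your proposal is correct and follows essentially the same route as the paper: for $(1)\Rightarrow(2)$ the paper argues by contraposition via decoupling of the $v$-equation (invoking backward uniqueness for the heat equation rather than your explicit eigenfunction solution, but either works), and for $(2)\Rightarrow(1)$ the paper simply defers to the variable-coefficient result \Cref{couplzonecontrole}, whose proof (due to de~Teresa) is exactly the Carleman-plus-cascade-absorption argument you sketch. The only cosmetic difference is that your necessity argument exhibits a single explicit obstruction $v_0=\phi_1$, whereas the paper's backward-uniqueness argument shows failure for every $v_0\neq 0$.
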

\indent Indeed, if $ \frac{\partial f_2}{\partial u}(0,0) = 0$, then the equation on $v$ is decoupled from the first equation of \eqref{systemelf1f2}. Consequently, for any initial data $(u_0,v_0) \in L^2(\Omega)^2$ such that $v_0 \neq 0$, we have $v(T,.)\neq 0$ by the backward uniqueness of the heat equation (see \cite{BT}). The proof of $(2) \Rightarrow (1)$ is a byproduct of \Cref{couplzonecontrole}.\\
\indent Roughly speaking, $u$ can be driven to $0$ thanks to the control $h$ and $v$ can be driven to $0$ thanks to the \textit{coupling term} $ \frac{\partial f_2}{\partial u}(0,0) u$. We have the following diagram
\[ h \overset{controls} \rightsquigarrow u \overset{controls} \rightsquigarrow v.\]
\begin{defi}\label{deflocalglobalcontr}[Null-controllability]
\begin{enumerate}[nosep]
\item System \eqref{systemef1f2} is said to be \textit{\textbf{locally} null-controllable} if there exists $\delta > 0$ such that for every ${(u_0,v_0) \in L^{\infty}(\Omega)^2}$ satisfying $\norme{(u_0,v_0)}_{L^{\infty}(\Omega)^2} \leq \delta$, there exists $h \in L^2(Q_T)$ such that \eqref{systemef1f2} has a (unique) solution $(u,v) \in L^{\infty}(Q_T)^2$ that satisfies $(u,v)(T,.) = (0,0)$.
\item System \eqref{systemef1f2} is said to be \textit{\textbf{globally} null-controllable} if\\ for every $(u_0,v_0) \in L^{\infty}(\Omega)^2$, there exists $h \in L^2(Q_T)$ such that \eqref{systemef1f2} has a (unique) solution $(u,v) \in L^{\infty}(Q_T)^2$ that satisfies $(u,v)(T,.) = (0,0)$.
\end{enumerate}
\end{defi}
\indent Now, we mention the \textbf{linear test} for \eqref{systemef1f2} which is a corollary of \Cref{couplageconstant}.
\begin{prop}\cite[Proof of Theorem 1]{CGR}\label{nonlineartoy}\\
Let us suppose that $\frac{\partial f_2}{\partial u}(0,0) \neq 0$. Then, \eqref{systemef1f2} is locally null-controllable.
\end{prop}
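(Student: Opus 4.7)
The plan is to deduce local null-controllability of \eqref{systemef1f2} from the null-controllability of \eqref{systemelf1f2} by means of an inverse function theorem argument (in the spirit of Liusternik). First, I would write $f_i(u,v) = a_i u + b_i v + g_i(u,v)$, with $a_i := \frac{\partial f_i}{\partial u}(0,0)$, $b_i := \frac{\partial f_i}{\partial v}(0,0)$, and $g_i \in C^{\infty}(\R^2;\R)$ vanishing to second order at the origin. The hypothesis $a_2 \neq 0$ ensures, via \Cref{couplageconstant}, that the constant-coefficient linear system \eqref{systemelf1f2} is null-controllable. A standard refinement extracted from the Carleman-based observability inequality on the adjoint system upgrades this to null-controllability with source terms: for any $(u_0,v_0) \in L^{\infty}(\Omega)^2$ and any $(F_1,F_2)$ bounded in a suitable weighted $L^2$-norm involving Fursikov-Imanuvilov weights $\rho(t)$ that blow up at $t=T$, one can find a control $h \in L^2(Q_T)$ such that the solution of \eqref{systemelf1f2} with right-hand sides $F_1 + h1_{\omega}$ and $F_2$ satisfies $(u,v)(T,\cdot) = (0,0)$, together with a continuous-dependence estimate.

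Next, I would introduce a Banach space $E$ of triples $(u,v,h)$ for which $\rho(\partial_t u - \Delta u - h 1_{\omega})$ and $\rho(\partial_t v - \Delta v)$ lie in $L^2(Q_T)$, with $(u,v) \in L^{\infty}(Q_T)^2$ and $(u,v)(T,\cdot) = (0,0)$, and a target space $F = L^2_\rho(Q_T)^2 \times L^{\infty}(\Omega)^2$. Define
$$\mathcal{F}(u,v,h) := \big(\partial_t u - \Delta u - f_1(u,v) - h 1_{\omega},\ \partial_t v - \Delta v - f_2(u,v),\ u(0,\cdot),\ v(0,\cdot)\big).$$
Since $E$ embeds into $L^{\infty}(Q_T)^2$ and $f_1,f_2$ are smooth, $\mathcal{F}$ is of class $C^1$ in a neighborhood of $0$. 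Its differential at the origin sends $(u,v,h)$ to the linear triple associated with \eqref{systemelf1f2}, and surjectivity of $d\mathcal{F}(0)$ onto $F$ is exactly the source-term null-controllability recalled above. The inverse function theorem then produces $\delta > 0$ such that, whenever $\norme{(u_0,v_0)}_{L^{\infty}(\Omega)^2} \leq \delta$, the equation $\mathcal{F}(u,v,h) = (0,0,u_0,v_0)$ has a solution $(u,v,h) \in E$, yielding a null-controlled trajectory of \eqref{systemef1f2}.

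The main technical point is the simultaneous design of the weighted space $E$: the weight $\rho$ must blow up fast enough at $t=T$ to enforce the terminal constraint $(u,v)(T,\cdot) = (0,0)$ and to be compatible with the Carleman-observability inequality that delivers the source-term controllability, while at the same time $E$ must embed into $L^{\infty}(Q_T)^2$ so that composition with $f_1,f_2$ defines a $C^1$ map into $F$. This interplay between linear Carleman weights and nonlinear regularity is handled by classical parabolic estimates and is by now standard; the detailed argument is carried out in \cite{CGR}.
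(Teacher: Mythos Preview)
Your proposal is correct and follows exactly the approach the paper attributes to \cite{CGR}: the paper does not give its own proof of this proposition but simply cites \cite[Proof of Theorem 1]{CGR}, whose argument is precisely the Liusternik/inverse-function-theorem scheme in Carleman-weighted spaces that you outline. Your sketch matches the cited proof in both strategy and technical ingredients.
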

\begin{rmk}
This result is well-known but it is difficult to find in the literature (see \cite[Theorem 6]{AKBD} with a restriction on the dimension $1 \leq N < 6$ and other function spaces or one can adapt the arguments given in \cite{CGR} to get \Cref{nonlineartoy} for any $N\in \N^*$). For other results in this direction, see \cite{WZ}, \cite{LCMML}, \cite{GBPG} and \cite{CSG}.
\end{rmk}
The natural question is: what can we say about \eqref{systemef1f2} if the \textbf{linearized system around $((0,0),0)$ is not null-controllable} i.e. when $ \frac{\partial f_2}{\partial u}(0,0) = 0$?\\
\indent Another strategy to get local null-controllability for \eqref{systemef1f2} consists in \textbf{linearizing around a non trivial trajectory} $(\overline{u},\overline{v},\overline{h}) \in C^{\infty}(\overline{Q_T})^3$ of the nonlinear system  \eqref{systemef1f2}  which goes from $0$ to $0$. This procedure is called the \textbf{return method} and was introduced by Jean-Michel Coron in \cite{C2} (see \cite[Chapter 6]{C}). The linearized system is the following one:
\begin{equation*}
\left\{
\renewcommand*{\arraystretch}{1.5}
\begin{array}{l l}
\partial_t u -  \Delta u =  \frac{\partial f_1}{\partial u}(\overline{u},\overline{v}) u + \frac{\partial f_1}{\partial v}(\overline{u},\overline{v}) v +h 1_{\omega} &\mathrm{in}\ (0,T)\times\Omega,\\
\partial_t v -  \Delta v =  \frac{\partial f_2}{\partial u}(\overline{u},\overline{v}) u + \frac{\partial f_2}{\partial v}(\overline{u},\overline{v}) v  &\mathrm{in}\ (0,T)\times\Omega,\\
u,v= 0&\mathrm{on}\ (0,T)\times\partial\Omega,\\
(u,v)(0,.)=(u_0,v_0)& \mathrm{in}\ \Omega.
\end{array}
\right.
\tag{L-bis}
\label{systemelrmf1f2}
\end{equation*}
First, let us recall the generalization of \Cref{couplageconstant} when the \textit{coupling coefficients are not constant}. Historically, the proof is due to Luz de Teresa in \cite{LdT}.
\begin{prop}\cite[Theorem 7.1]{AKBGBT}\label{couplzonecontrole}\\
We assume that there exist $t_1 < t_2 \in (0,T)$, a nonempty open subset $\omega_0 \subset \omega$ and $\varepsilon > 0$ such that $\left|\frac{\partial f_2}{\partial u}(\overline{u}(t,x),\overline{v}(t,x))\right| \geq \varepsilon$ for every $ (t,x) \in (t_1,t_2)\times \omega_0$. Then, system \eqref{systemelrmf1f2} is null-controllable (in the sense of \Cref{definullcontr}).
\end{prop}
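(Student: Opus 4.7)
The plan is to invoke the Hilbert Uniqueness Method: the null-controllability of \eqref{systemelrmf1f2} is equivalent to an observability inequality for the adjoint system
\begin{equation*}
\left\{
\begin{array}{l}
-\partial_t \varphi - \Delta \varphi = a_{11}\varphi + a_{21}\psi \text{ in } Q_T,\\
-\partial_t \psi - \Delta \psi = a_{12}\varphi + a_{22}\psi \text{ in } Q_T,\\
\varphi = \psi = 0 \text{ on } (0,T)\times\partial\Omega,\quad (\varphi,\psi)(T,\cdot) = (\varphi_T, \psi_T),
\end{array}
\right.
\end{equation*}
where $a_{ij} \in L^{\infty}(Q_T)$ denotes the corresponding partial derivative of $f_i$ evaluated at $(\overline{u},\overline{v})$. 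The target observability is
\begin{equation*}
\norme{\varphi(0,\cdot)}_{L^2(\Omega)}^2 + \norme{\psi(0,\cdot)}_{L^2(\Omega)}^2 \leq C \iint_{(0,T)\times\omega} |\varphi|^2,
\end{equation*}
which I would derive from a global Carleman estimate, following the strategy of de~Teresa \cite{LdT}.

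First I would fix open sets $\omega_1 \Subset \omega_0$ and a Fursikov--Imanuvilov weight $\eta \in C^2(\overline{\Omega})$ adapted to $\omega_1$, and set $\alpha = (e^{2\lambda\norme{\eta}_\infty}-e^{\lambda\eta})/(t(T-t))$, $\xi = e^{\lambda\eta}/(t(T-t))$. Applying the standard scalar parabolic Carleman estimate to each adjoint equation with local observation on $\omega_1$, summing them, and absorbing the zeroth-order coupling terms for $s,\lambda$ large (using $a_{ij} \in L^\infty$) gives
\begin{equation*}
I(s,\varphi) + I(s,\psi) \leq C s^3 \iint_{(0,T)\times\omega_1} \xi^3 e^{-2s\alpha}\bigl(|\varphi|^2 + |\psi|^2\bigr),
\end{equation*}
where $I(s,w)$ denotes the standard Carleman energy functional. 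The crucial step is to convert the local $\psi$-integral into a local $\varphi$-integral: on $(t_1,t_2)\times \omega_0$, the hypothesis $|a_{21}| \geq \varepsilon$ lets one solve the first adjoint equation for $\psi$,
\begin{equation*}
\psi = -\frac{1}{a_{21}}\bigl(\partial_t \varphi + \Delta \varphi + a_{11}\varphi\bigr).
\end{equation*}
Shrinking the Carleman construction so that the local term is supported in $(t_1',t_2')\times\omega_1$ with $(t_1',t_2') \Subset (t_1,t_2)$, and introducing a space-time cut-off $\chi(t)\zeta(x) \in C^\infty_c((t_1,t_2)\times\omega_0)$ equal to one on this set, one substitutes the expression above and integrates by parts in $t$ and $x$ to transfer $\partial_t$ and $\Delta$ from $\varphi$ onto the smooth compactly-supported weight $\chi^2\zeta^2 \xi^3 e^{-2s\alpha}/a_{21}^2$. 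This bounds the local $\psi$-term by $\tfrac{1}{2}I(s,\varphi)$ plus a polynomial-in-$(s,\xi)$ weighted integral of $|\varphi|^2$ on $(0,T)\times\omega$. Absorbing the first piece into the left yields the desired Carleman inequality, and the observability inequality follows by the usual energy argument: on $[T/4,3T/4]$ the weight $e^{-2s\alpha}$ is bounded below, so $I(s,\cdot)$ controls $\norme{(\varphi,\psi)(T/2,\cdot)}_{L^2(\Omega)}^2$, and backward parabolic dissipation on $[0,T/2]$ handles the initial norms.

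The main technical obstacle is the integration-by-parts step: the $\Delta\varphi$ factor naturally generates $|\nabla\varphi|^2$ contributions on $\omega_0$ that have to be absorbed either into the gradient part $s\iint \xi e^{-2s\alpha}|\nabla\varphi|^2$ of $I(s,\varphi)$ — at the price of picking a prefactor weight strictly less singular in $s,\xi$ than the dominant Carleman weight — or eliminated by one further integration by parts that sends them back to $|\varphi|^2$ plus boundary terms killed by $\zeta$. Similarly, derivatives of $\chi$ in $t$ only act where $\chi \in (0,1)$, i.e.\ inside $(t_1,t_2)$ where $e^{-2s\alpha}$ remains bounded, and the $\partial_t\chi$-terms produce harmless powers of $\xi$. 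A careful bookkeeping of all such boundary and commutator terms, together with the fact that $1/a_{21}$ is smooth and bounded on the support of $\chi\zeta$, is the technical heart of the argument.
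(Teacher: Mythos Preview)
The paper does not give its own proof of this proposition: it is quoted from \cite[Theorem 7.1]{AKBGBT}, with the remark that ``historically, the proof is due to Luz de Teresa in \cite{LdT}''. Your sketch is precisely the de~Teresa strategy the paper is pointing to --- duality, a global Carleman estimate for the adjoint pair, and the key step of using $|a_{21}|\geq\varepsilon$ on $(t_1,t_2)\times\omega_0$ to write $\psi = -a_{21}^{-1}(\partial_t\varphi+\Delta\varphi+a_{11}\varphi)$ locally and then integrating by parts to trade the local $\psi$-term for a local $\varphi$-term --- so there is nothing to compare.

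Two minor technical comments on your write-up. First, the cleanest way to ensure the local observation term lives where $a_{21}$ is invertible is to run the Carleman estimate directly on the subinterval $(t_1,t_2)$ with the weight singular at $t=t_1$ and $t=t_2$, and then propagate to $(0,T)$ by energy estimates for the adjoint system; this avoids having to argue separately about a time cut-off interacting with the $(0,T)$-weight. Second, the integration-by-parts step needs $1/a_{21}$ to be at least $C^{1}$ in $t$ and $C^{2}$ in $x$ on the support of $\chi\zeta$; in the present setting $(\overline{u},\overline{v})\in C^\infty(\overline{Q_T})^2$, so this holds, but it is worth making the regularity requirement explicit.
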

Then, the linear test gives the following result.
\begin{prop}\cite[Proof of Theorem 1]{CGR}\label{nonlineartoy2}\\
We assume that there exist $t_1 < t_2 \in (0,T)$, a nonempty open subset $\omega_0 \subset \omega$ and $\varepsilon > 0$ such that $\left|\frac{\partial f_2}{\partial u}(\overline{u},\overline{v})\right| \geq \varepsilon$  on $(t_1,t_2) \times \omega_0$. Then, system \eqref{systemef1f2} is locally null-controllable.
\end{prop}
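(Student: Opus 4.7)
The plan is to reduce the nonlinear problem to the linear null-controllability result of \Cref{couplzonecontrole} by a change-of-variable around the return trajectory followed by an inverse-mapping argument. I would first perform the substitution $\tilde{u} = u - \overline{u}$, $\tilde{v} = v - \overline{v}$, $\tilde{h} = h - \overline{h}$, which transforms \eqref{systemef1f2} into a system of the form
\begin{equation*}
\left\{
\begin{array}{l}
\partial_t \tilde{u} - \Delta \tilde{u} = \frac{\partial f_1}{\partial u}(\overline{u},\overline{v})\tilde{u} + \frac{\partial f_1}{\partial v}(\overline{u},\overline{v})\tilde{v} + R_1(\tilde{u},\tilde{v}) + \tilde{h}1_\omega,\\
\partial_t \tilde{v} - \Delta \tilde{v} = \frac{\partial f_2}{\partial u}(\overline{u},\overline{v})\tilde{u} + \frac{\partial f_2}{\partial v}(\overline{u},\overline{v})\tilde{v} + R_2(\tilde{u},\tilde{v}),
\end{array}
\right.
\end{equation*}
with initial data $(u_0,v_0)$ (now small since the trajectory starts at $0$) and target $(0,0)$ at time $T$ (since $(\overline{u},\overline{v})(T)=(0,0)$). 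Here $R_1, R_2$ are smooth and $O(|\tilde{u}|^2 + |\tilde{v}|^2)$ near $0$ thanks to the smoothness of $f_1,f_2$ and of $(\overline{u},\overline{v}) \in C^\infty(\overline{Q_T})^2$.

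Next, I would invoke \Cref{couplzonecontrole}: under the hypothesis $\left|\frac{\partial f_2}{\partial u}(\overline{u},\overline{v})\right|\geq \varepsilon$ on $(t_1,t_2)\times \omega_0$, the linearized system around $(\overline{u},\overline{v})$ is null-controllable. More precisely, one obtains a continuous linear right inverse of the ``$(u_0,v_0,\text{source}) \mapsto (u,v)(T,\cdot)$'' operator, yielding for any $L^2$ source term and any $L^2$ initial data a control $\tilde{h} \in L^2(Q_T)$ steering the linear trajectory to zero, with a bound of the form $\|\tilde h\|_{L^2} + \|(\tilde u,\tilde v)\|_{X} \leq C(\|(u_0,v_0)\|_{L^\infty} + \|\text{source}\|_{Y})$, for well-chosen Banach spaces $X,Y$ in which the energy/parabolic estimates behave well (for instance $X = L^\infty(Q_T)^2$ and $Y$ an $L^p$ space with $p$ large enough, so that the quadratic remainders $R_i(\tilde u,\tilde v)$ satisfy a quadratic estimate $\|R_i(\tilde u,\tilde v)\|_Y \leq C\|(\tilde u,\tilde v)\|_X^2$).

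I would then conclude via Liusternik's inverse mapping theorem in Banach spaces (as in \cite{CGR}), applied to the $C^1$ map
\[\Phi : (\tilde u,\tilde v,\tilde h) \longmapsto \big((\tilde u,\tilde v)(0,\cdot),\,(\tilde u,\tilde v)(T,\cdot),\,\text{residual equation}\big)\]
defined on a neighborhood of $0$ in an appropriate product of Banach spaces. The linearization of $\Phi$ at $0$ is surjective onto the relevant target space precisely because of the null-controllability of \eqref{systemelrmf1f2}, so Liusternik's theorem yields, for any sufficiently small $(u_0,v_0) \in L^\infty(\Omega)^2$, a triple $(\tilde u,\tilde v,\tilde h)$ solving the perturbed system with $(\tilde u,\tilde v)(T,\cdot) = 0$. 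Going back to $(u,v) = (\overline u + \tilde u, \overline v + \tilde v)$ gives the desired local null-controllability of \eqref{systemef1f2}. Equivalently, one may implement a Banach fixed-point scheme $(\tilde u,\tilde v) \mapsto (\tilde u',\tilde v')$ where $\tilde u',\tilde v'$ solve the linear control problem with source $R_i(\tilde u,\tilde v)$; the quadratic estimate on $R_i$ ensures contraction on a small ball.

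The main technical obstacle is the choice of function spaces: one needs $(\tilde u,\tilde v)$ in a space strong enough (e.g.\ $L^\infty$) to control the quadratic remainders by the norm squared, while the controlled linear theory of \Cref{couplzonecontrole} is naturally stated in $L^2$. Bridging this gap requires parabolic regularity estimates (maximal $L^p$ regularity for the heat operator with bounded coefficients, plus Sobolev embeddings depending on the dimension $N$) to upgrade an $L^2$ control producing an $L^2$ state into a control whose state lies in $L^\infty(Q_T)$; this is the delicate point that explains the dimension restriction in \cite{AKBD} and motivated the careful $L^p$-framework of \cite{CGR} that can be adapted here for arbitrary $N$.
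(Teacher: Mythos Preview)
The paper does not supply its own proof of this proposition: it is stated with the citation \cite[Proof of Theorem 1]{CGR} and no argument is given in the present text. Your sketch is precisely the standard linearize-around-the-trajectory plus inverse-mapping (or fixed-point) scheme that \cite{CGR} carries out, and you have correctly identified both the role of \Cref{couplzonecontrole} for the surjectivity of the linearized map and the genuine technical issue (choosing function spaces so that the controlled state lies in $L^\infty(Q_T)$ in order to handle the quadratic remainders for arbitrary $N$). There is nothing further to compare.
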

\Cref{nonlineartoy2} is used used in \cite{CGR} with $f_2(u_1,u_2)= u _1^3 + R u_2$, where $R \in \R$, \cite{CGMR}, \cite{CG} and \cite{LB}.
\subsection{The “power system”}
\indent A model-system for the question of null-controllability when the linearized system around $((0,0),0)$ is not null-controllable is the following one:
\begin{equation}
\left\{
\begin{array}{l l}
\partial_t u -  \Delta u =   h 1_{\omega} &\mathrm{in}\ (0,T)\times\Omega,\\
\partial_t v -  \Delta v = u^{n}  &\mathrm{in}\ (0,T)\times\Omega,\\
u,v= 0&\mathrm{on}\ (0,T)\times\partial\Omega,\\
(u,v)(0,.)=(u_0,v_0)& \mathrm{in}\ \Omega,
\end{array}
\right.
\tag{Power}
\label{systemeu^n}
\end{equation}
where $n \geq 2$ is an integer.
\begin{prop}\label{resultatnegatifeven}
If $n$ is an even integer, then \eqref{systemeu^n} is not locally null-controllable.
\end{prop}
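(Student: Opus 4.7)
The strategy is a positivity obstruction: when $n$ is even, $u^n \geq 0$ pointwise, so the source term in the equation for $v$ has a sign, and by testing against the first Dirichlet eigenfunction we show that the first Fourier coefficient of $v$ in that eigenfunction cannot decrease. This will prevent $v(T,\cdot)=0$ for a well-chosen arbitrarily small initial datum $(u_0,v_0)$.

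More precisely, let $\varphi_1 \in C^2(\overline{\Omega})$ be a first Dirichlet eigenfunction of $-\Delta$ on $\Omega$, normalized by $\varphi_1 > 0$ in $\Omega$ and $\int_\Omega \varphi_1^2 \, dx = 1$, associated with the first eigenvalue $\lambda_1 > 0$. I would set
\[
\psi(t,x) := e^{\lambda_1 t}\, \varphi_1(x), \quad (t,x) \in [0,T]\times \overline{\Omega},
\]
which satisfies $\partial_t \psi + \Delta \psi = 0$ in $Q_T$ together with $\psi = 0$ on $(0,T)\times \partial\Omega$, and $\psi > 0$ inside.

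The core computation is to argue, by contradiction. Suppose \eqref{systemeu^n} is locally null-controllable for some $n$ even, and let $\delta > 0$ be the radius given by \Cref{deflocalglobalcontr}. Pick $\varepsilon \in (0,\delta/\|\varphi_1\|_{L^\infty})$ and consider the initial datum $(u_0,v_0) = (0, \varepsilon \varphi_1)$, whose $L^\infty$ norm is at most $\delta$. By assumption there is a control $h \in L^2(Q_T)$ yielding a solution $(u,v) \in L^\infty(Q_T)^2$ of \eqref{systemeu^n} with $(u,v)(T,\cdot) = (0,0)$. Multiplying the $v$-equation by $\psi$, integrating over $Q_T$, and integrating by parts in both $t$ and $x$ (justified by the $L^\infty$ bound on $u^n$ together with standard parabolic regularity of $v$, which lies in $W^{2,1}_p(Q_T)$ for every $p < \infty$), I would obtain
\[
\Bigl[\int_\Omega v(t,\cdot)\,\psi(t,\cdot)\,dx\Bigr]_{t=0}^{t=T} - \int_{Q_T} v\,(\partial_t \psi + \Delta \psi)\,dx\,dt = \int_{Q_T} u^n\,\psi\,dx\,dt.
\]
Since $\partial_t \psi + \Delta \psi = 0$, $v(T,\cdot)=0$, and $v(0,\cdot) = \varepsilon\varphi_1$, this reduces to
\[
- \varepsilon \int_\Omega \varphi_1^2\,dx \;=\; \int_{Q_T} u^n\,\psi\,dx\,dt.
\]
The right-hand side is $\geq 0$ because $n$ is even and $\psi \geq 0$, whereas the left-hand side equals $-\varepsilon < 0$: a contradiction.

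The steps are therefore: (i) fix $\varphi_1$ and define the test function $\psi$; (ii) pick the contradictory initial datum $(0,\varepsilon\varphi_1)$; (iii) carry out the integration by parts carefully using the regularity provided by the assumed $L^\infty$ solution; (iv) exploit the parity of $n$ to conclude. The only delicate point is the justification of the integration by parts, which does not raise genuine difficulties given that $v$ solves the heat equation with $L^\infty$ right-hand side and that $\psi$ is smooth up to the boundary; all the real content of the argument is the sign of $u^n$, which is what makes even exponents structurally obstructive to controllability.
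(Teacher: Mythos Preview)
Your proof is correct but takes a different route from the paper. The paper argues via the parabolic maximum principle: since $u^n \geq 0$, any solution $v$ with $v_0 \geq 0$, $v_0 \not\equiv 0$ dominates the free heat flow $\widetilde{v}$ started from $v_0$, so that $v(T,\cdot) \geq \widetilde{v}(T,\cdot) \geq 0$ with $\widetilde{v}(T,\cdot) \not\equiv 0$ by backward uniqueness for the heat equation. Your argument instead tests the $v$-equation against $e^{\lambda_1 t}\varphi_1$ and reduces the obstruction to a single scalar identity for the first Fourier mode. Both rely on the same structural fact---the nonnegativity of $u^n$ for even $n$---but your version avoids invoking the comparison principle and is more self-contained, needing only an integration by parts and the positivity of $\varphi_1$. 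The paper's route, on the other hand, yields pointwise information (namely $v(T,\cdot) \geq 0$ and $\not\equiv 0$) rather than an averaged obstruction, and applies to any nonnegative $v_0 \not\equiv 0$ without singling out the first eigenfunction.
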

Indeed, by the \textit{maximum principle} ($u^n \geq 0$), we have, for any solution of \eqref{systemeu^n} associated to an initial condition $(u_0, v_0)$ with $v_0 \geq 0$ and $v_0 \neq 0$,
\[ v(T,.) \geq \widetilde{v}(T,.) \geq 0\qquad \text{and}\qquad \widetilde{v}(T,.) \neq 0,\]
where $\widetilde{v}$ is the solution of the heat equation
\begin{equation*}
\left\{
\begin{array}{l l}
\partial_t \widetilde{v}-  \Delta \widetilde{v} = 0  &\mathrm{in}\ (0,T)\times\Omega,\\
\widetilde{v}= 0&\mathrm{on}\ (0,T)\times\partial\Omega,\\
\widetilde{v}(0,.)=v_0& \mathrm{in}\ \Omega.
\end{array}
\right.
\label{heatequationmax}
\end{equation*}
\indent The following result is due to Jean-Michel Coron, Sergio Guerrero and Lionel Rosier. The proof is based on the return method (see \cite{CGR}).
\begin{prop}\cite[Theorem 1]{CGR}\label{Powern=3}\\
If $n=3$, then \eqref{systemeu^n} is locally null-controllable.
\end{prop}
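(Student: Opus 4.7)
The plan is to apply the return method of J.-M.\ Coron. Since $\partial_u(u^3)\big|_{u=0} = 0$, the linearization of \eqref{systemeu^n} with $n=3$ at $(0,0)$ is not null-controllable (\Cref{couplageconstant}), so \Cref{nonlineartoy} does not apply. Instead, I aim to produce a smooth trajectory $(\overline u, \overline v, \overline h) \in C^\infty(\overline{Q_T})^3$ of the cubic system satisfying $(\overline u, \overline v)(0,\cdot) = (\overline u, \overline v)(T,\cdot) = 0$ on $\Omega$ and $|\overline u|^2 \geq \varepsilon$ on some subcylinder $(t_1, t_2) \times \omega_0$ with $\omega_0 \subset \omega$ and $\varepsilon > 0$. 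Then \Cref{nonlineartoy2} applied with $f_2(u,v) = u^3$ gives $\big|\partial_u f_2(\overline u, \overline v)\big| = 3|\overline u|^2 \geq 3\varepsilon$ on $(t_1,t_2)\times\omega_0$ and concludes the local null-controllability.

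To build such a trajectory, I would choose $\overline u \in C^\infty_c\big((0,T)\times\omega\big)$, define $\overline v$ as the solution of $\partial_t \overline v - \Delta \overline v = \overline u^3$ with $\overline v(0,\cdot) = 0$, and set $\overline h := \partial_t \overline u - \Delta \overline u$, which is then automatically smooth and supported in $\omega$. The nontrivial constraint is $\overline v(T,\cdot) = 0$, which by Duhamel amounts to
\begin{equation*}
\int_0^T e^{(T-s)\Delta_D}\,\overline u^{3}(s,\cdot)\,ds = 0 \qquad \text{in } L^2(\Omega),
\end{equation*}
where $\Delta_D$ is the Dirichlet Laplacian on $\Omega$. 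The oddness of the cubic nonlinearity is essential here: $\overline u^3$ can change sign, which leaves room for positive and negative contributions in this Duhamel integral to cancel, in sharp contrast with the situation for even exponents (\Cref{resultatnegatifeven}).

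A concrete way to realize the constraint is to split $\overline u = \overline u_1 + \overline u_2$, with $\overline u_1, \overline u_2 \in C^\infty_c((0,T)\times\omega)$ having \emph{disjoint spatial supports} inside $\omega$, so that $\overline u^3 = \overline u_1^{3} + \overline u_2^{3}$. The first piece $\overline u_1$ is chosen with $\overline u_1 \geq \sqrt{\varepsilon}$ on $(t_1,t_2)\times\omega_0$; this delivers the required lower bound on $|\overline u|$. The second piece $\overline u_2$, supported in a disjoint open subset $\omega_2 \subset \omega$, must play the role of a corrector:
\begin{equation*}
\int_0^T e^{(T-s)\Delta_D}\,\overline u_2^{3}(s,\cdot)\,ds \;=\; -\int_0^T e^{(T-s)\Delta_D}\,\overline u_1^{3}(s,\cdot)\,ds.
\end{equation*}

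The main obstacle is to solve this cubic equation for $\overline u_2$. It is itself a nonlinear exact-controllability problem for the heat equation on $\Omega$, whose distributed source is $\overline u_2^{3}$ localized in $\omega_2$. The natural strategy is to fix a nontrivial smooth ansatz $\overline u_2^{(0)}$ supported in $(0,T) \times \omega_2$ with $\overline u_2^{(0)} \neq 0$ on a subcylinder, and to linearize the cubic Duhamel map around $\overline u_2^{(0)}$. The resulting linear map $\delta \mapsto \int_0^T e^{(T-s)\Delta_D}\big(3(\overline u_2^{(0)})^{2}\delta\big)\,ds$ is surjective onto $L^2(\Omega)$ by the Carleman-based reachability theory for the heat equation with a nonvanishing weight (this is essentially the content underlying \Cref{couplzonecontrole}); an inverse function theorem argument on a well-chosen Banach space then yields $\overline u_2$, after tuning the amplitude of $\overline u_1$ so that the right-hand side lies in a suitably small neighborhood of $\int_0^T e^{(T-s)\Delta_D}(\overline u_2^{(0)})^{3}\,ds$. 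Feeding the resulting trajectory $(\overline u, \overline v, \overline h)$ into \Cref{nonlineartoy2} then closes the proof.
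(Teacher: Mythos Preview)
Your high-level strategy is the right one and matches both the original proof in \cite{CGR} and the paper's own trajectory construction in \Cref{existencetraj}: apply the return method, build a reference trajectory $(\overline u,\overline v,\overline h)$ going from $(0,0)$ to $(0,0)$ with $\overline u$ bounded away from zero on a subcylinder, then invoke \Cref{nonlineartoy2}. You also correctly isolate the only genuine difficulty, namely the constraint $\overline v(T,\cdot)=0$.

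The gap is in your proposed resolution of that constraint. You claim that the linearized Duhamel map
\[
\delta \longmapsto \int_0^T e^{(T-s)\Delta_D}\big(3(\overline u_2^{(0)})^{2}\delta\big)(s,\cdot)\,ds
\]
is surjective onto $L^2(\Omega)$. It is not: by parabolic smoothing, any state reached at time $T$ from zero initial data with an $L^2$ (or even $L^1$) source is $C^\infty(\Omega)$, so the range is a strict dense subspace of $L^2(\Omega)$ that is not closed. \Cref{couplzonecontrole} gives \emph{null}-controllability of the heat equation with such a weight, not exact controllability, and these are very different for irreversible dynamics. Consequently the linearization has no bounded right inverse as a map into $L^2(\Omega)$, and the inverse function theorem does not apply in the form you invoke. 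A second, related issue: even if you could produce some $\overline u_2$ solving the cubic Duhamel identity, nothing in your argument forces $\overline u_2$ to be regular enough for $\overline h_2 := \partial_t \overline u_2 - \Delta \overline u_2$ to make sense.

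Both obstacles are exactly what the paper's \Cref{lemcontreg} is designed to overcome: rather than solving a cubic equation by local inversion, it produces directly a null-control $H$ for the heat equation with the property that $H^{1/(2k+1)}$ lies in $\bigcap_{p}X_{T,p}$, so one can simply set $\overline u_2 := H^{1/3}$ (cf.\ \Cref{defu2v2h2RM}). The original \cite{CGR} argument for $n=3$ achieves the same goal by a more explicit, hands-on construction of the trajectory. Either route avoids the ill-posed exact-controllability step you rely on.
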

\begin{rmk}
The difficult point of the proof of \Cref{Powern=3} is the construction of the nontrivial trajectory (see \cite[Section 2]{CGR}). The method can be generalized to $n=2k+1$ for $k \in \N^*$ but with longer computations. The same problem appears in \cite[Section 4.2]{Zh}.
\end{rmk}
\begin{rmk}
An homogeneity argument shows that for the system \eqref{systemeu^n} the local null-controllability implies the global null-controllability (consider $u_{\varepsilon} = \varepsilon u$, $v_{\varepsilon} = \varepsilon^n v$, $h_{\varepsilon} = \varepsilon h$). However, this strategy does not provide estimate on the control. This kind of argument is used in \cite{CG}. In this paper, we propose a different direct method for the global null-controllability, that provides estimates.
\end{rmk}

\subsection{A direct approach}
From now on, $k\in \N^*$ is fixed.\\
\indent The first goal of this paper is to give \textit{a direct proof (i.e. without return method)} of the \textit{global null-controllability} of the system 
\begin{equation*}
\left\{
\begin{array}{l l}
\partial_t u -  \Delta u =   h 1_{\omega} &\mathrm{in}\ (0,T)\times\Omega,\\
\partial_t v -  \Delta v = u^{2k+1}  &\mathrm{in}\ (0,T)\times\Omega,\\
u,v= 0&\mathrm{on}\ (0,T)\times\partial\Omega,\\
(u,v)(0,.)=(u_0,v_0)& \mathrm{in}\ \Omega.
\end{array}
\right.
\tag{Odd}
\label{systemeodd}
\end{equation*}
Our proof is based on a \textit{new duality method}, called \textbf{Reflexive Uniqueness Method}. The first step will consist in proving a \textit{Carleman estimate in $L^{2k+2}$ for the heat equation} (see \Cref{carlestimate2k+2subsection} and particularly \Cref{carl2+2cor}). The second step will consist in considering a \textit{penalized problem in $L^{\frac{2k+2}{2k+1}}$} (see \Cref{RUM}), a generalization of the Penalized \textbf{Hilbert Uniqueness Method}, introduced by Jacques-Louis Lions (see \cite{LJL} and also \cite[Section 2]{Z-hand} for an introduction to the Hilbert Uniqueness Method and some generalizations). This procedure enables us to find \textit{a control of the heat equation which is an odd power of a regular function}.\\
\indent The second goal of this paper is to prove a \textit{local null-controllability} result for more general systems than \eqref{systemeodd}  thanks to the \textbf{return method} (introduced in \Cref{intro}).

\section{Main results}

\subsection{Definitions and usual properties}

\begin{defi}\label{defiYspaceL2}
We introduce the functional space 
\begin{equation}
W_T := L^2(0,T;H_0^1(\Omega)) \cap H^1(0,T;H^{-1}(\Omega)).
\end{equation}
\end{defi}

\begin{prop}\label{injclassique}\cite[Section 5.9.2, Theorem 3]{E}\\
We have the embedding
\begin{equation}
W_T \hookrightarrow C([0,T];L^2(\Omega)).
\end{equation}
\end{prop}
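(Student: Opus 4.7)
The plan is to follow the classical strategy of Evans: establish an energy identity for smooth approximants, bootstrap this into a uniform sup-bound in $L^2(\Omega)$, and then promote the embedding to the desired continuous inclusion. The main technical ingredient is a chain-rule justification via mollification; this is the only step that requires real care, since $u'$ only lives in $H^{-1}(\Omega)$, so the pairing $\langle u', u\rangle$ must be interpreted in the duality sense.

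First, I would extend any $u \in W_T$ to a function $\widetilde{u}$ on an open interval $I \supset [0,T]$ (for instance by reflection across $t=0$ and $t=T$) so that $\widetilde{u} \in L^2(I; H_0^1(\Omega)) \cap H^1(I; H^{-1}(\Omega))$ with norm controlled by $\|u\|_{W_T}$. Mollifying in time with a standard kernel $\rho_{\varepsilon}$ yields smooth approximants $u^{\varepsilon} := \rho_{\varepsilon} \ast \widetilde{u}$ that are $C^{\infty}$ in time with values in $H_0^1(\Omega)$ and converge to $u$ in $W_T$ as $\varepsilon \to 0$.

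Next, for such a smooth approximant, the classical chain rule gives
\begin{equation*}
\frac{d}{dt}\norme{u^{\varepsilon}(t)}_{L^2(\Omega)}^2 = 2\,\psc{(u^{\varepsilon})'(t)}{u^{\varepsilon}(t)}_{H^{-1}(\Omega), H_0^1(\Omega)},
\end{equation*}
and integrating between $s$ and $t$ in $[0,T]$ yields
\begin{equation*}
\norme{u^{\varepsilon}(t)}_{L^2(\Omega)}^2 = \norme{u^{\varepsilon}(s)}_{L^2(\Omega)}^2 + 2\int_s^t \psc{(u^{\varepsilon})'(r)}{u^{\varepsilon}(r)}_{H^{-1}(\Omega), H_0^1(\Omega)}\, dr.
\end{equation*}
Choosing $s \in (0,T)$ so that $\norme{u^{\varepsilon}(s)}_{L^2(\Omega)}^2 \leq \frac{1}{T}\norme{u^{\varepsilon}}_{L^2(0,T;L^2(\Omega))}^2$ (mean value in $s$) and using the Cauchy–Schwarz inequality on the pairing together with the continuous embedding $H_0^1(\Omega) \hookrightarrow L^2(\Omega) \hookrightarrow H^{-1}(\Omega)$, I obtain
\begin{equation*}
\sup_{t \in [0,T]} \norme{u^{\varepsilon}(t)}_{L^2(\Omega)} \leq C \norme{u^{\varepsilon}}_{W_T},
\end{equation*}
with $C$ depending only on $T$ and $\Omega$.

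Finally, applying the same identity to differences $u^{\varepsilon_1} - u^{\varepsilon_2}$ shows that $(u^{\varepsilon})_{\varepsilon > 0}$ is Cauchy in $C([0,T]; L^2(\Omega))$; its limit is then a continuous representative of $u$, and passing to the limit in the sup-bound delivers the continuous embedding $W_T \hookrightarrow C([0,T]; L^2(\Omega))$. The delicate point, and the only genuine obstacle, is justifying the chain-rule identity for the mollified functions and passing to the limit in the $H^{-1}$–$H_0^1$ duality pairing; everything else reduces to standard mollification estimates and the fundamental theorem of calculus.
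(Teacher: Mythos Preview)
Your proposal is correct and follows exactly the classical argument of Evans (\cite[Section 5.9.2, Theorem 3]{E}) that the paper simply cites without reproducing; since the paper gives no proof of its own, there is nothing further to compare.
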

We have this well-posedness result for linear parabolic systems.
\begin{defprop}\label{wpl2}
Let $l \in \N^*$, $y_0 \in L^2(\Omega)^l$, $g \in L^{2}(Q_T)^l$. The following Cauchy problem admits a unique weak solution $y \in W_T^l $
\[
\left\{
\begin{array}{l l}
\partial_t y -  \Delta y=  g&\mathrm{in}\ (0,T)\times\Omega,\\
y = 0 &\mathrm{on}\ (0,T)\times\partial\Omega,\\
y(0,.)=y_0 &\mathrm{in}\  \Omega.
\end{array}
\right.
\]
This means that $y$ is the unique function in $W_T^l$ which satisfies the variational formulation
\begin{equation}
\forall w \in L^2(0,T;H_0^1(\Omega)^l),\ \int_0^T (\partial_t y ,w)_{(H^{-1}(\Omega)^l),H_0^1(\Omega)^l)} + \int_{Q_T}  \nabla y .  \nabla w = \int_{Q_T} g . w,
\label{formvar}
\end{equation}
and
\begin{equation}
y(0,.) = y_0 \ \mathrm{in}\ L^2(\Omega)^l.
\label{condinitl2}
\end{equation}
Moreover, there exists $ C >0$ independent of $y_0$ and $g$ such that
\begin{equation}
 \norme{y}_{W_T^l} \leq C \left(\norme{y_0}_{L^{2}(\Omega)^l}+\norme{g}_{L^{2}(Q_T)^l}\right).
 \label{estl2faible}
\end{equation}
Let $p \in [1,+\infty]$, $y_0 \in L^{p}(\Omega)^l$ and $g \in L^{p}(Q_T)^l$, then $y \in L^{p}(Q_T)^l$ and there exists $ C >0$ independent of $y_0$ and $g$ such that
\begin{equation}
\norme{y}_{L^{p}(Q_T)^l} \leq C \left(\norme{y_0}_{L^{p}(\Omega)^l}+\norme{g}_{L^{p}(Q_T)^l}\right).
\label{estlinftyfaible}
\end{equation}
\end{defprop}
\begin{proof}
First, the well-posedness in $W_T^l$ (i.e. \eqref{formvar}, \eqref{condinitl2} and \eqref{estl2faible}) is based on \textit{Galerkin approximations and energy estimates}. One can easily adapt the arguments given in \cite[Section 7.1.2]{E}.\\
\indent Secondly, the $L^p$-estimate (i.e. \eqref{estlinftyfaible} for $p<+\infty$) is based on the application of \eqref{formvar} with a cut-off of $w = |y|^{p-2}y$.\\
\indent Finally, the $L^{\infty}$-estimate (i.e. \eqref{estlinftyfaible} for $p=+\infty$) is based on \textit{Stampacchia's method} (see the proof of \cite[Chapter 3, Paragraph 7, Theorem 7.1]{LSU}).
\end{proof}
The following definition-proposition justifies the notion of (unique) solution associated to a control and therefore, the definition of local null-controllability and global null-controllability (already introduced in \Cref{intro}, see \Cref{deflocalglobalcontr}).
\begin{defprop}\label{defpropsolNL}
Let $(u_0,v_0) \in L^{\infty}(\Omega)^2$, $h \in L^2(Q_T)$.\\
\indent Let $(u,v) \in (W_T \cap L^{\infty}(Q_T))^2$. We say that $(u,v)$ is a solution of \eqref{systemef1f2} if $(u,v)$ satisfies 
\begin{align}
&\forall (w_1,w_2) \in L^2(0,T;H_0^1(\Omega))^2\notag,\\
&\label{formvarNL1}  \int_0^T (\partial_t u ,w_1)_{(H^{-1}(\Omega),H_0^1(\Omega))} + \int_{Q_T}  \nabla u .  \nabla w_1 = \int_{Q_T} (f_1(u,v)+h1_{\omega}) w_1,\\
&\label{formvarNL2}\int_0^T (\partial_t v ,w_2)_{(H^{-1}(\Omega),H_0^1(\Omega))} + \int_{Q_T}  \nabla v .  \nabla w_2= \int_{Q_T} f_2(u,v) w_2,
\end{align}
and 
\begin{equation}
\label{initNL}
(u,v)(0,.) = (u_0,v_0)\ \mathrm{in}\ L^{\infty}(\Omega)^2.
\end{equation}
\indent Let $(u,v) \in (W_T \cap L^{\infty}(Q_T))^2$ and $(\widetilde{u},\widetilde{v}) \in (W_T\cap L^{\infty}(Q_T))^2$ be two solutions of \eqref{systemef1f2}. Then, $(u,v) =(\widetilde{u},\widetilde{v})$.
\end{defprop}
\begin{proof}
The nonlinearities $f_1$ and $f_2$ are in $C^{\infty}(\R^2, \R)$, thus they are locally Lipschitz on $\R^2$. This provides the uniqueness of the solution of \eqref{systemef1f2} in $L^{\infty}(Q_T)$ (associated to an initial data $(u_0,v_0) \in L^{\infty}(\Omega)^2$ and a control $h \in L^2(Q_T)$) by a Gronwall argument (see \Cref{proofuniqueness}).
\end{proof}

\subsection{Main results}
Our first main result is the following one.
\begin{theo}\label{mainresult1}
The system \eqref{systemeodd} is globally null-controllable (in the sense of \Cref{deflocalglobalcontr}).\\
\indent More precisely, there exists $(C_p)_{p \in [2,+\infty)} \in (0,\infty)^{[2,+\infty)}$ such that for every initial data $(u_0,v_0) \in L^{\infty}(\Omega)^2$, there exists a control $h \in \bigcap\limits_{p \in [2, +\infty)}L^p(Q_T)$ satisfying
\begin{equation}\label{estimcontrolmr1}
\forall p \in [2, +\infty),\ \norme{h}_{L^p(Q_T)} \leq C_p \left(\norme{u_0}_{L^{\infty}(\Omega)} + \norme{v_0}_{L^{\infty}(\Omega)}^{1/(2k+1)}\right),
\end{equation}
and the solution $(u,v)$ of \eqref{systemeodd} verifies
$$(u,v)(T,.)=(0,0).$$
\end{theo}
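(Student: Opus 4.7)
By the homogeneity argument of the remark following \Cref{Powern=3} (rescaling $u_\varepsilon=\varepsilon u$, $v_\varepsilon=\varepsilon^{2k+1}v$, $h_\varepsilon=\varepsilon h$), it suffices to prove the existence of a control in $\bigcap_{p<\infty}L^p(Q_T)$ whose $L^p$-norm is bounded by a $p$-dependent constant, for data of unit $L^\infty$-norm; the rescaling then yields \eqref{estimcontrolmr1}. I would split $[0,T]=[0,T_1]\cup[T_1,T]$. On $[0,T_1]$, the classical scalar null-controllability of the heat equation (Fursikov--Imanuvilov) supplies $h\in L^\infty((0,T_1)\times\omega)$ driving $u$ from $u_0$ to $0$; by the $L^\infty$-estimate of \Cref{wpl2}, $v(T_1,\cdot)=:\widetilde v_0\in L^\infty(\Omega)$ with norm controlled by $\norme{u_0}_{L^\infty}^{2k+1}+\norme{v_0}_{L^\infty}$. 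The substance of the proof is on $[T_1,T]$: starting from $u(T_1,\cdot)=0$ and $v(T_1,\cdot)=\widetilde v_0$, find $u$ satisfying the controlled heat equation $\partial_tu-\Delta u=h\mathbf{1}_\omega$ with $u(T,\cdot)=0$, such that $u^{2k+1}$, used as a source term, drives $v$ from $\widetilde v_0$ to $0$.

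\medskip
\noindent\textbf{Reflexive Uniqueness Method.}
Testing the terminal constraints against backward heat solutions $\varphi,\psi$ with terminal data $\varphi_T,\psi_T$, the conditions $u(T,\cdot)=0$ and $v(T,\cdot)=0$ translate into
\[
\iint_{\omega\times(T_1,T)}h\,\varphi\;=\;0,\qquad\iint_{(T_1,T)\times\Omega}u^{2k+1}\psi\;=\;-\int_\Omega\widetilde v_0\,\psi(T_1,\cdot).
\]
Following the Reflexive Uniqueness Method announced in the introduction, I would construct $(u,h)$ as the critical point of a strictly convex penalized functional, dual to the minimisation of $\iint_{\omega\times(T_1,T)}|h|^{2k+2}$, set on the reflexive Banach space $L^{(2k+2)/(2k+1)}(\Omega)$ of terminal data. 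The Euler--Lagrange equation, together with the \emph{odd-power duality} $x\mapsto|x|^{1/(2k+1)-1}x$ on $\R$ (whose inverse is $x\mapsto x^{2k+1}$), identifies $u$ on $\omega$ with the $1/(2k+1)$-th power of the adjoint state, so that $u^{2k+1}$ on $\omega$ coincides with the adjoint variable; the heat equation outside $\omega$ extends $u$ to the full domain. Coercivity of the dual functional requires the paper's key technical ingredient, an $L^{2k+2}$-Carleman estimate for the backward heat equation (\Cref{carl2+2cor}),
\[
\iint_{(T_1,T)\times\Omega}\theta\,|\psi|^{2k+2}\;\le\;C\iint_{\omega\times(T_1,T)}\theta\,|\psi|^{2k+2},
\]
for a suitable Fursikov--Imanuvilov-type weight $\theta$. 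I would derive this inequality from the classical $L^2$-Carleman by bootstrapping through the $L^p$-parabolic estimate of \Cref{wpl2}, applied to the conjugated variable $\theta^{1/(2k+2)}\psi$. Passing the penalization parameter to $0$ and using reflexivity (rather than Hilbertian weak compactness), I would extract a weak limit from which the corresponding $u$ and $h=(\partial_t-\Delta)u$ provide the desired control; a final parabolic bootstrap via \eqref{estlinftyfaible} upgrades $u$ to $L^\infty$, whence $h\in L^p(Q_T)$ for every finite $p$.

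\medskip
\noindent\textbf{Main obstacle.}
The central difficulty is the design of the dual problem so that the Euler--Lagrange equation of a convex functional on a \emph{reflexive Banach} (rather than Hilbert) space reproduces the nonlinear source term $u^{2k+1}$ required in the second equation. The conjugate exponents $2k+2$ and $(2k+2)/(2k+1)$ are forced by the duality pairing $|x|^{2k}x\leftrightarrow x^{1/(2k+1)}$ on $\R$, and this in turn dictates the need for an $L^{2k+2}$-Carleman estimate; obtaining the latter, rather than the usual $L^2$ one, is the technical heart of the method. Tracking the $p$-dependence of the constants in the $L^{2k+2}$-Carleman and in the subsequent parabolic regularity finally yields the $p$-dependent bound \eqref{estimcontrolmr1}.
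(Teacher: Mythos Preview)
Your overall strategy matches the paper's: split $[0,T]$ in two, drive $u$ to zero on the first half by scalar heat controllability, then on the second half construct $u$ so that $u^{2k+1}$ acts as a null-control for the $v$-equation, with the $L^{2k+2}$-Carleman estimate (obtained by bootstrapping the $L^2$ one through parabolic regularity) as the engine of the Reflexive Uniqueness Method. The homogeneity reduction you invoke at the start is legitimate, though the paper instead tracks the constants directly through the construction.

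However, your formulation of the RUM is off in a way that matters for the argument to close. The penalized functional is \emph{not} set up over the control $h$ of the $u$-equation in an $L^{2k+2}$-norm; the paper minimizes a weighted $L^q$-norm, $q=(2k+2)/(2k+1)$, of the control $H$ of the \emph{$v$-equation} (the object that must equal $u^{2k+1}$), plus a terminal penalty, see \eqref{defJ}. The Euler--Lagrange equation \eqref{hfonctionphi} then reads $(H^\varepsilon)^{1/(2k+1)}=(\text{Carleman weight})\,\varphi^\varepsilon\,\chi$, with $\varphi^\varepsilon$ a backward heat solution: the point is that $H^{1/(2k+1)}$, not $H$, is a weight times the adjoint, so $u$ itself (not $u^{2k+1}$) coincides with the weighted adjoint. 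A second bootstrap on the backward equation for $\varphi^\varepsilon$ (\Cref{bootstrap}) then gives $H^{1/(2k+1)}\in\bigcap_p X_{T,p}$. One simply sets $u_2:=H^{1/(2k+1)}$, which is already globally defined and supported in $\omega$ via the cutoff $\chi$, and $h_2:=\partial_t u_2-\Delta u_2\in\bigcap_p L^p$; no ``extension by the heat equation outside $\omega$'' is involved. With these corrections your sketch becomes exactly the paper's proof.
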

\begin{rmk}
We give some natural extensions of this result in \Cref{geneodd}.
\end{rmk}
\begin{rmk}\label{rmkhLinfty}
If we assume that 
\begin{equation}
\Omega \in C^{2, \alpha},
\label{OmegaC2a}
\end{equation}
with $0 < \alpha < 1$, then \Cref{mainresult1} remains true with a control $h \in L^{\infty}(Q_T)$ and the estimate \eqref{estimcontrolmr1} holds true with $p = +\infty$ (see \Cref{rmk2hlinfty} and \Cref{rmk3hlinfty}).
\end{rmk}
Our second main result is a local controllability result for more general reaction-diffusion systems than \eqref{systemeodd}. 
\begin{theo}\label{mainresult2}
Let $(g_1,g_2)\in C^{\infty}(\R;\R)^2$ be such that
\begin{align*}
&g_1(0) = g'_1 (0) = \dots = g_1^{(2k)}(0)=0\ \text{and}\ g_1^{(2k+1)}(0) \neq 0,\\
&g_2(0) \neq 0. \end{align*}
Let $f_1 \in C^{\infty}(\R^2;\R)$, $f_2 \in C^{\infty}(\R^2;\R)$ be such that
\begin{align}
&\forall v \in \R,\  f_1(0,v)=0\label{hyp1},\\
&\forall (u,v) \in \R^2,\  f_2 (u,v) := g_1(u)g_2(v).
\label{hyp2}
\end{align}
Then, the system \eqref{systemef1f2} is locally null-controllable (in the sense of \Cref{deflocalglobalcontr}).
\end{theo}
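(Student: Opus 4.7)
The plan is to apply the return method: I seek a smooth nontrivial trajectory $(\overline{u}, \overline{v}, \overline{h}) \in C^{\infty}(\overline{Q_T})^3$ of \eqref{systemef1f2} steering $(0,0)$ at time $t=0$ back to $(0,0)$ at time $t=T$, with $\mathrm{supp}(\overline{h}) \subset (0,T)\times\omega$, such that the coupling coefficient of the linearization around $(\overline{u}, \overline{v})$,
\[
\frac{\partial f_2}{\partial u}(\overline{u}, \overline{v}) = g_1'(\overline{u}) \, g_2(\overline{v}),
\]
is bounded below in absolute value on some sub-cylinder $(t_1, t_2) \times \omega_0$. Local null-controllability of \eqref{systemef1f2} near $(0,0)$ then follows directly from the linear test \Cref{nonlineartoy2}.

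To construct the trajectory, I first pick $\overline{u} \in C^{\infty}(\overline{Q_T})$ compactly supported in $(0,T)\times\omega$ (in particular vanishing at $t=0,T$ and on $(0,T)\times\partial\Omega$) with $|\overline{u}| \geq \eta > 0$ on some sub-cylinder $(t_1, t_2) \times \omega_0$; a product $\overline{u}(t, x) = \lambda \alpha(t)\beta(x)$ with $\alpha \in C^{\infty}_c((0,T))$ and $\beta \in C^{\infty}_c(\omega)$ is a natural choice. Then $\overline{v}$ is uniquely determined as the smooth solution of
\[
\partial_t \overline{v} - \Delta \overline{v} = g_1(\overline{u})\, g_2(\overline{v}), \qquad \overline{v} = 0 \text{ on } (0,T)\times\partial\Omega, \qquad \overline{v}(0, \cdot) = 0,
\]
and I define $\overline{h} := \partial_t \overline{u} - \Delta \overline{u} - f_1(\overline{u}, \overline{v})$. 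The hypothesis $f_1(0, v) = 0$ from \eqref{hyp1}, combined with $\overline{u} \equiv 0$ outside $(0,T)\times\omega$, guarantees $\mathrm{supp}(\overline{h}) \subset (0,T)\times\omega$. Using the Taylor expansions $g_1'(u) = \frac{g_1^{(2k+1)}(0)}{(2k)!}\,u^{2k} + O(u^{2k+1})$ and $g_2(\overline{v}) = g_2(0) + O(\overline{v})$ together with $g_2(0) \neq 0$, the required lower bound on $\left|\frac{\partial f_2}{\partial u}(\overline{u}, \overline{v})\right|$ on $(t_1, t_2) \times \omega_0$ holds automatically for small enough $\lambda$, since both factors then stay away from $0$ there.

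The main obstacle is the remaining endpoint condition $\overline{v}(T, \cdot) = 0$: the construction of $\overline{u}$ above does not produce this identity for free, and a naive implicit function argument fails because the Fréchet derivative of the map $\overline{u} \mapsto \overline{v}(T, \cdot)$ at $\overline{u} = 0$ vanishes identically (indeed $g_1$ vanishes to order $2k+1$ at zero). I plan to follow the strategy of \cite[Section 2]{CGR}, which handles the cubic case $f_2(u,v) = u^3$ by an explicit construction and, as the authors note, extends to $f_2(u,v) = u^{2k+1}$ at the cost of longer computations: work in the small-amplitude regime $\overline{u} = \lambda \widetilde{u}$, expand $\overline{v} = \lambda^{2k+1} \overline{v}_1 + \lambda^{2k+2} \overline{v}_2 + \dots$ using the Taylor series of $g_1$ and $g_2$, choose the profile $\widetilde{u}$ so that the leading term satisfies $\overline{v}_1(T, \cdot) = 0$, then absorb the higher-order remainders by a Banach fixed-point argument. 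Once the trajectory is in hand, smoothness of $f_1, f_2$ makes the application of \Cref{nonlineartoy2} routine and closes the proof.
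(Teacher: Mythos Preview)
Your overall strategy via the return method and \Cref{nonlineartoy2} matches the paper's. The divergence is precisely at the point you flag as the main obstacle: forcing $\overline{v}(T,\cdot)=0$.

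You propose to follow the \cite{CGR} route (power-series expansion in the amplitude $\lambda$, choice of profile killing the leading term, fixed point for the remainder). The paper explicitly acknowledges this is possible but deliberately avoids it: the whole purpose of \Cref{lemcontreg} (the new ``odd-power control'' for the heat equation, obtained via the Reflexive Uniqueness Method of Section~4) is to bypass those computations. Concretely, the paper splits $(0,T)$ in half. On $(0,T/2)$ it does exactly what you describe---pick $\overline{u_1}$ compactly supported in $(0,T/2)\times\omega$, solve for $\overline{v_1}$, define $\overline{h_1}$---without worrying about the final condition. On $(T/2,T)$ it invokes \Cref{lemcontreg} to produce a control $H$ steering the heat equation from $\overline{v_1}(T/2,\cdot)$ to $0$ with the key property that $H^{1/(2k+1)}$ is regular and small; then, writing $g_1(u)=\widetilde{g_1}(u)^{2k+1}$ locally near $0$ via Taylor (with $\widetilde{g_1}$ a local $C^\infty$-diffeomorphism), it sets
\[
\overline{u_2}:=\widetilde{g_1}^{\,-1}\!\left(H^{1/(2k+1)}\,g_2(\overline{v_2})^{-1/(2k+1)}\right),
\]
so that $g_1(\overline{u_2})\,g_2(\overline{v_2})=H$ holds \emph{exactly}. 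This replaces your asymptotic expansion and fixed-point closure by a single algebraic inversion, at the price of the reference trajectory living only in $(W_T\cap L^\infty)^2\times\bigcap_p L^p$ rather than $C^\infty(\overline{Q_T})^3$.

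What each approach buys: yours stays within the \cite{CGR} toolkit and would in principle yield a $C^\infty$ trajectory, but the expansion here is heavier than in the cubic case (the higher Taylor coefficients of $g_1$ and the $v$-dependence through $g_2$ both feed back into every order), so the ``longer computations'' the paper warns about become substantially longer, and you have not indicated how the fixed-point step handles the $g_2(\overline{v})$ factor. The paper's route is conceptually cleaner, scales uniformly in $k$, and showcases its main new tool---but it requires the $L^{2k+2}$ Carleman estimate and the $L^{(2k+2)/(2k+1)}$ penalized duality argument as prerequisites.
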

\begin{app}
By taking $f_1(u,v) = -u^{2k+1}=-f_2(u,v)$,  \Cref{mainresult2} shows the local null-controllability of a model for the non reversible chemical reaction (according to the law of mass action and the Fick's law) 
\begin{equation*}
(2k+1)U\rightharpoonup V,
\end{equation*}
where $u$ and $v$ denote respectively the concentrations of the component $U$ and $V$.\\
\indent However, we can't deduce from \Cref{mainresult2} a local null-controllability result (which is true for $k=1$ thanks to \cite[Theorem 1]{CGR}) of a model for the reversible chemical reaction \begin{equation*}
(2k+1)U\rightleftharpoons V,
\end{equation*}
which corresponds to $f_1(u,v) = -u^{2k+1} + v = -f_2(u,v)$.\\
\indent By taking $f_1(u,v) = (k_2 - (2k_1+1)) u^{2k_1+1} + (k_5 -  (2k_1+1))u^{2k_1+1} v^{k_4}$ and $f_2(u,v) = k_3 u^{2k_1+1} +(k_6 - k_4) u^{2k_1+1} v^{k_4}$ with $k_1$, $k_2$, $k_3$, $k_4$, $k_5$, $k_6$ positive integers, \Cref{mainresult2} shows the local null-controllability of a model for the chemical reaction
\begin{align*}
\left\{
\begin{array}{l l}
(2k_1+1) U &\rightharpoonup k_2 U + k_3 V,\\
(2k_1+1) U + k_4 V &\rightharpoonup k_5 U + k_6 V.
\end{array}
\right.
\end{align*}
\end{app}

From now on, unless otherwise specified, we denote by $C$ (respectively $C_r$) a positive constant (respectively a positive constant which depends on the parameter r) that may change from line to line.

\section{Global null-controllability for the “odd power system”}\label{preuveglobale}

The aim of this part is to prove \Cref{mainresult1}. We now fix $(u_0,v_0) \in L^{\infty}(\Omega)^2$ until the end of the section.

\subsection{First step of the proof: steer $u$ to $0$}\label{fsetape}

First, we find a control of \eqref{systemeodd} which steers $u$ to $0$ in time $T/2$. 
\begin{prop}\label{controlT/2}
There exists $h_1 \in L^{\infty}((0,T/2)\times \Omega)$ satisfying 
\begin{equation}
\norme{h_1}_{L^{\infty}((0,T/2)\times \Omega)} \leq C \norme{u_0}_{L^{\infty}(\Omega)},
\label{esticontrolT/2}
\end{equation}
such that the solution $(u_1,v_1) \in L^{\infty}((0,T/2)\times \Omega)^2 $ of
\begin{equation*}
\left\{
\begin{array}{l l}
\partial_t u_1 -  \Delta u_1 =   h_1 1_{\omega} &\mathrm{in}\ (0,T/2)\times\Omega,\\
\partial_t v_1 -  \Delta v_1 = u_1^{2k+1}  &\mathrm{in}\ (0,T/2)\times\Omega,\\
u_1,v_1= 0&\mathrm{on}\ (0,T/2)\times\partial\Omega,\\
(u_1,v_1)(0,.)=(u_0,v_0)& \mathrm{in}\ \Omega,
\end{array}
\right.
\label{systemeoddT/2}
\end{equation*}
satisfies $u_1(T/2,.)=0$. Moreover, we have 
\begin{equation}\label{estiv1T/2}
\norme{v_1(T/2,.)}_{L^{\infty}(\Omega)} \leq C \left(\norme{u_0}_{L^{\infty}(\Omega)}^{2k+1} + \norme{v_0}_{L^{\infty}(\Omega)}\right).
\end{equation}
\end{prop}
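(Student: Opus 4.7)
The plan hinges on the observation that the first equation of the system is a decoupled heat equation with distributed control. Once a control $h_1$ steering $u_1(T/2,\cdot)$ to zero has been constructed with the required $L^\infty$-bound, the $v_1$-equation becomes a linear heat equation with source $u_1^{2k+1}$, and the desired estimate \eqref{estiv1T/2} will follow from the $L^\infty$-estimate \eqref{estlinftyfaible} in \Cref{wpl2}.

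The first task is therefore to produce $h_1 \in L^\infty((0,T/2)\times\Omega)$ driving $u_1(T/2,\cdot)$ to zero, with $\norme{h_1}_{L^\infty} \leq C\norme{u_0}_{L^\infty(\Omega)}$. The Lebeau--Robbiano / Fursikov--Imanuvilov theorem recalled in \Cref{intro} only directly supplies an $L^2$-control with linear dependence on $\norme{u_0}_{L^2(\Omega)}$. To upgrade this to $L^\infty$ with a linear dependence on $\norme{u_0}_{L^\infty(\Omega)}$, I would split the time interval, use the freedom to make the control vanish on a neighbourhood of $t=T/2$, and exploit parabolic regularity: on an initial sub-interval one applies \eqref{estlinftyfaible} with $h\equiv 0$ to retain the $L^\infty$-bound on the uncontrolled heat evolution, on a middle sub-interval one uses the $L^2$-null-controllability (after rescaling so that the relevant $L^2$-norm is controlled by the $L^\infty$-norm of the traced initial condition), and a final sub-interval is again run without control. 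A bootstrap using \eqref{estlinftyfaible} for increasing integrability $p$, combined with the smoothing effect of the heat semigroup away from the support of the control, then converts the $L^2$-control into an $L^\infty$-control while preserving the linear quantitative bound.

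With $h_1$ in hand, the remainder is direct. Applying \eqref{estlinftyfaible} from \Cref{wpl2} to the $u_1$-equation yields $\norme{u_1}_{L^\infty((0,T/2)\times\Omega)} \leq C(\norme{u_0}_{L^\infty(\Omega)} + \norme{h_1}_{L^\infty}) \leq C\norme{u_0}_{L^\infty(\Omega)}$. Raising to the power $2k+1$, the source $u_1^{2k+1}$ in the $v_1$-equation satisfies $\norme{u_1^{2k+1}}_{L^\infty} \leq C\norme{u_0}_{L^\infty(\Omega)}^{2k+1}$, and a second application of \eqref{estlinftyfaible} gives
\[
\norme{v_1}_{L^\infty((0,T/2)\times\Omega)} \leq C\left(\norme{v_0}_{L^\infty(\Omega)} + \norme{u_0}_{L^\infty(\Omega)}^{2k+1}\right),
\]
which implies \eqref{estiv1T/2} by evaluating at $t=T/2$.

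The main obstacle is undoubtedly the first step: producing an $L^\infty$-control whose norm is linearly bounded by $\norme{u_0}_{L^\infty(\Omega)}$. The two remaining steps are routine consequences of the decoupled structure of the system and the $L^p$-parabolic estimate \eqref{estlinftyfaible} in \Cref{wpl2}.
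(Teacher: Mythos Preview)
Your approach matches the paper's exactly: invoke $L^\infty$ null-controllability of the heat equation to get $h_1$ and the bound on $u_1$, then apply \eqref{estlinftyfaible} twice (once to the $u_1$-equation, once to the $v_1$-equation) to obtain \eqref{estiv1T/2}. The only difference is that the paper treats the $L^\infty$ null-controllability of the heat equation as a black-box result (stated as \Cref{nullheat}, with references to Fernandez-Cara--Zuazua, Barbu, and Bodart--Gonzalez-Burgos--P\'erez-Garc\'ia in \Cref{refpreuve}), whereas you sketch a derivation of it; your sketch is in the spirit of the first of those references, but since the result is classical you can simply cite it.
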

\begin{proof}
The proof is based on the following result (see \Cref{refpreuve} for some references).
\begin{prop}\label{nullheat}[Null-controllability in $L^{\infty}$ of the linear heat equation in any time]\\
For every $\tau >0$, $y_0 \in L^{\infty}(\Omega)$, there exists $h_\tau \in L^{\infty}(Q_{\tau})$ satisfying
\begin{equation}
\norme{h}_{L^{\infty}((0,\tau)\times \Omega)} \leq C_{\tau} \norme{y_0}_{L^{\infty}(\Omega)},
\end{equation}
such that the solution $y \in L^{\infty}(Q_{\tau})$ of 
\begin{equation}
\left\{
\begin{array}{l l}
\partial_t y -  \Delta y =   h_\tau 1_{\omega} &\mathrm{in}\ (0,\tau)\times\Omega,\\
y= 0&\mathrm{on}\ (0,\tau)\times\partial\Omega,\\
y(0,.)=y_0& \mathrm{in}\ \Omega,
\end{array}
\right.
\label{heat}
\end{equation}
satisfies $y(\tau,.)=0.$
\end{prop}
We use \Cref{nullheat} by taking $\tau = T/2$, $y_0 = u_0$. We get the existence of a control $h_1 \in L^{\infty}((0,T/2)\times \Omega)$ satisfying \eqref{esticontrolT/2} which steers $u_1 \in L^{\infty}((0,T/2)\times \Omega)$ to $0$:
\begin{equation}
\left\{
\begin{array}{l l}
\partial_t u_1 -  \Delta u_1 =   h_1 1_{\omega} &\mathrm{in}\ (0,\tau)\times\Omega,\\
u_1= 0&\mathrm{on}\ (0,\tau)\times\partial\Omega,\\
(u_1(0,.),u_1(T/2,.))=(u_0,0)& \mathrm{in}\ \Omega.
\end{array}
\right.
\label{heatu1}
\end{equation}
Moreover, from \eqref{heatu1}, \eqref{estlinftyfaible} with $p=+\infty$ and \eqref{esticontrolT/2}, we get
\begin{equation}
\norme{u_1}_{L^{\infty}((0,T/2)\times \Omega)} \leq C \norme{u_0}_{L^{\infty}(\Omega)}.
\label{estu1linfty}
\end{equation}
Then, we set $v_1 \in L^{\infty}((0,T/2)\times \Omega)$ (see \Cref{wpl2} with $p= +\infty$), as the solution of 
\begin{equation}
\left\{
\begin{array}{l l}
\partial_t v_1 -  \Delta v_1 = u_1^{2k+1}  &\mathrm{in}\ (0,T/2)\times\Omega,\\
v_1= 0&\mathrm{on}\ (0,T/2)\times\partial\Omega,\\
v_1(0,.)=v_0& \mathrm{in}\ \Omega.
\end{array}
\right.
\label{systemevT/2}
\end{equation}
From \eqref{systemevT/2}, \eqref{estlinftyfaible} with $p=+\infty$ and \eqref{estu1linfty}, we have \eqref{estiv1T/2}.
\end{proof}
\begin{rmk}\label{refpreuve}
There exists at least three proofs of \Cref{nullheat}. First, the common argument is the \textit{null-controllability of the heat equation in $L^2$} proved independently by Gilles Lebeau, Luc Robbiano in 1995 (see \cite{LR} and \cite{LLR}) and Andrei Fursikov, Oleg Imanuvilov in 1996 (see \cite{FI}). Then, the goal is to get a control in $L^{\infty}$. The first method has been employed for the first time by Enrique Fernandez-Cara and Enrique Zuazua (see \cite[Theorem 3.1]{FCZ}) and it is based on the \textit{local regularizing effect of the heat equation} which leads to a \textit{refined observability inequality} (see \cite[Proposition 3.2]{FCZ}). The second method has been employed for the first time by Viorel Barbu (see \cite{B2}) and it is based on a \textit{Penalized Hilbert Uniqueness Method} (see also \cite[Section 3.1.2]{CGR}). The more recent method is due to Olivier Bodart, Manuel Gonzalez-Burgos, Rosario Pérez-Garcia (see \cite{BGBPG}) and it is sometimes called the \textit{fictitious control method} (see \cite[Section 2]{FCGBGP} for the Neumann case and \cite{DL1}).
\end{rmk}

\subsection{Second step of the proof: steer $v$ to $0$ thanks to a control which is as an odd power}\label{scetape}
The aim of this part is to find a control of \eqref{systemeodd} which steers $v$ to $0$ (and $u$ from $0$ to $0$) in time $T$.
\begin{prop}\label{controlT}
Let $((u_1,v_1),h_1)$ as in \Cref{controlT/2}.\\
\indent There exists a control $h_2 \in \bigcap_{p \in [2, +\infty)}L^p((T/2,T)\times \Omega)$ satisfying
\begin{equation}
\forall p \in [2,+\infty),\ \norme{h_2}_{L^{p}((T/2,T)\times \Omega)} \leq C_p\left( \norme{u_0}_{L^{\infty}(\Omega)} + \norme{v_0}_{L^{\infty}(\Omega)}^{1/(2k+1)}\right),
\label{esticontrolT}
\end{equation}
such that the solution $(u_2,v_2) \in L^{\infty}((T/2,T)\times \Omega)^2 $ of
\begin{equation*}
\left\{
\begin{array}{l l}
\partial_t u_2 -  \Delta u_2 =   h_2 1_{\omega} &\mathrm{in}\ (T/2,T)\times\Omega,\\
\partial_t v_2 -  \Delta v_2 = u_2^{2k+1}  &\mathrm{in}\ (T/2,T)\times\Omega,\\
u_2,v_2= 0&\mathrm{on}\ (T/2,T)\times\partial\Omega,\\
(u_2,v_2)(T/2,.)=(0,v_1(T/2,.))& \mathrm{in}\ \Omega,
\end{array}
\right.
\label{systemeoddT}
\end{equation*}
satisfies $(u_2,v_2)(T,.)=(0,0)$.
\end{prop}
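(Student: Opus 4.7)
First, I would translate time to work on the interval $(0, T/2)$ (by a straightforward change of variable) and set $w_0 := v_1(T/2, \cdot) \in L^\infty(\Omega)$, so the task reduces to driving $(u_2, v_2)$ from $(0, w_0)$ to $(0, 0)$ in time $T/2$. I would then aim to construct a function $u_2 \in L^\infty$ with $u_2 = 0$ on $(0,T/2)\times\partial\Omega$ and $u_2(0, \cdot) = u_2(T/2, \cdot) = 0$ such that $h_2 := (\partial_t - \Delta) u_2$ is supported in $(0,T/2) \times \omega$ (making $h_2$ a genuine internal control), and such that the solution $v_2$ of the second equation with source $u_2^{2k+1}$ and initial datum $w_0$ satisfies $v_2(T/2, \cdot) = 0$.

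To produce such a $u_2$, I would apply the Reflexive Uniqueness Method announced in the introduction and developed in \Cref{RUM}. The idea is to minimize, on the terminal adjoint datum $\varphi_T$ of the backward heat equation, a strictly convex penalized functional whose leading part is a Carleman-weighted $L^{2k+2}$-norm on $(0, T/2) \times \omega$ of $\varphi$, plus a linear pairing $\int_\Omega w_0\, \varphi(0, \cdot)$, plus a small $\varepsilon$-quadratic penalization $\tfrac{\varepsilon}{2}\norme{\varphi_T}_{L^2(\Omega)}^2$. The coercivity of this functional in the reflexive $L^{2k+2}$-setting is exactly the content of the Carleman inequality in $L^{2k+2}$ (\Cref{carl2+2cor}); this is the reflexive substitute for the $L^2$-observability inequality underlying the classical Hilbert Uniqueness Method. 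At the unique minimizer $\varphi^\varepsilon_T$, the Euler--Lagrange equation forces the optimal source of the $v$-equation to be pointwise of the form $\rho |\varphi^\varepsilon|^{2k}\varphi^\varepsilon$ on $\omega$, which reads as $(u_2^\varepsilon)^{2k+1}$ for $u_2^\varepsilon := \rho^{1/(2k+1)}\varphi^\varepsilon$; and it simultaneously ensures that this $u_2^\varepsilon$ solves the $u$-equation with source supported in $\omega$ and with the required vanishing endpoint data.

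I would then pass to the limit $\varepsilon \to 0$ via weak compactness in reflexive $L^p$ spaces (together with the uniform bounds supplied by the Carleman inequality) to obtain the exact control. For the integrability estimate \eqref{esticontrolT} for every $p < \infty$, I would bootstrap using the $L^p$ parabolic regularity of \Cref{wpl2}, iteratively improving the integrability of $u_2$, of $u_2^{2k+1}$, and hence of $h_2 = (\partial_t - \Delta)u_2$. The specific form of the bound, with exponent $1/(2k+1)$ on $\norme{v_0}_{L^\infty(\Omega)}$, is dictated by the natural scaling $(u, v, h) \mapsto (\lambda^{1/(2k+1)} u,\, \lambda v,\, \lambda^{1/(2k+1)} h)$ preserving \eqref{systemeodd}, while the $\norme{u_0}_{L^\infty(\Omega)}$ term enters via the majorization of $\norme{w_0}_{L^\infty(\Omega)}$ provided by \eqref{estiv1T/2}.

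The main difficulty will be calibrating the Carleman weight $\rho$ (ensuring sufficient vanishing at $t = 0$ and $t = T/2$) and the precise form of the penalized functional so that the formal Euler--Lagrange computation actually yields a \emph{genuine} heat solution $u_2$ with all the required properties: source supported in $\omega$, correct vanishing at both endpoints, and $u_2^{2k+1}$ equal to the effective control of the $v$-equation. Once the $L^{2k+2}$-Carleman estimate is secured, the remainder is a systematic deployment of this reflexive variational framework combined with parabolic bootstrapping.
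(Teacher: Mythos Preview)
Your proposal is correct and follows essentially the same approach as the paper: the paper packages the RUM construction as \Cref{lemcontreg} (producing a heat control $H$ supported in $\omega$ with $H^{1/(2k+1)}\in\bigcap_{p}X_{\tau,p}$ and vanishing at the endpoints), then sets $u_2:=H^{1/(2k+1)}$, $h_2:=(\partial_t-\Delta)u_2$, and deduces \eqref{esticontrolT} from \eqref{estiv1T/2}. The only cosmetic differences are that the paper minimizes the \emph{primal} penalized functional \eqref{defJ} over controls in $L_{wght}^{q}$ rather than your dual functional over adjoint data $\varphi_T$ (the resulting Euler--Lagrange relation \eqref{hfonctionphi} is the same), and the regularity bootstrap uses maximal parabolic regularity \Cref{wplp} rather than \Cref{wpl2}.
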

\indent Our approach consists in looking at the second equation of \eqref{systemeodd} like a controlled heat equation where the \textit{state} is $v(t,.)$ and the \textit{control input} is $u^{2k+1}(t,.)$. Here, \textbf{the question consists in proving that the heat equation is null-controllable with localized control which \textit{is as an odd power}} of a regular function.

\indent For the sequel, we need to introduce some usual definitions and properties.
\begin{defi}\label{defracinepuissimp}
The mapping $x \in \R \mapsto x^{2k+1} \in \R$ is one-to-one. We note its inverse function $x  \mapsto x^{\frac{1}{2k+1}} $.
\end{defi}
\begin{defi}
\label{defiXp}
For all $\tau > 0$, $0< \tau_1 < \tau_2$, $p \in [1,+\infty]$, we introduce the functional spaces
$$X_{\tau,p} = L^p(0,\tau;W^{2,p}\cap W_0^{1,p}(\Omega)) \cap W^{1,p}(0,\tau; L^p(\Omega)),$$
$$X_{(\tau_1, \tau_2),p} = L^p(\tau_1,\tau_2;W^{2,p}\cap W_0^{1,p}(\Omega)) \cap W^{1,p}(\tau_1,\tau_2; L^p(\Omega)),$$
\end{defi}
The following result is \textit{new} and it is the \textit{key point} of this section.
\begin{prop}\label{lemcontreg}
For every $\tau > 0$, there exists $C_{\tau} >0$ such that \\for every $y_0 \in L^{\infty}(\Omega)$, there exists a control $h_{\tau} \in L^{\infty}(Q_{\tau})$ which verifies 
\begin{align*}
& \norme{h_{\tau}^{\frac{1}{2k+1}}}_{L^{\infty}(Q_{\tau})} \leq C_{\tau} \norme{y_0}_{L^{\infty}(\Omega)}^{1/(2k+1)},\\
&h_{\tau}^{\frac{1}{2k+1}} \in \bigcap\limits_{p \in [2, +\infty)} X_{\tau,p},\\
& \forall p \in [2,+\infty),\ \exists C_{\tau,p} >0,\  \norme{h_{\tau}^{\frac{1}{2k+1}}}_{X_{\tau,p}} \leq C_{\tau,p} \norme{y_0}_{L^{\infty}(\Omega)}^{1/(2k+1)},\\ 
&h_{\tau}(0,.)=h_{\tau}(\tau,.)=0,\\
&\forall t \in [0,\tau],\ supp(h_{\tau}(t,.)) \subset\subset \omega,
\end{align*} 
such that the solution $y \in  L^{\infty}(Q_{\tau})$ of \eqref{heat} satisfies $y(\tau,.)=0$.
\end{prop}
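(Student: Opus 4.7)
The plan is to realize this proposition as the output of a nonlinear HUM in the reflexive Banach space $L^{2k+2}$, which is what the introduction advertises as the \emph{Reflexive Uniqueness Method}. The starting input will be the Carleman inequality in $L^{2k+2}$ obtained in the previous subsection, which I will use to produce an observability inequality
\[
\|\varphi(0,\cdot)\|_{L^{(2k+2)/(2k+1)}(\Omega)} \leq C\,\|\rho\,\varphi\|_{L^{2k+2}(Q_\tau)}
\]
for solutions $\varphi$ of the backward heat equation with final data $\varphi_\tau$, where $\rho$ is a smooth weight, built from the Carleman weights, that vanishes to all orders at $t=0$, $t=\tau$ and on $\Omega\setminus\omega_0$ for some $\omega_0\subset\subset\omega$. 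Since $(2k+2)/(2k+1)$ is the dual exponent of $2k+2$, this is exactly the pairing that will let the control come out as an odd power.

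Next, I would introduce the strictly convex, continuous functional
\[
J(\varphi_\tau) = \frac{1}{2k+2}\iint_{Q_\tau} \rho^{2k+2}\,|\varphi|^{2k+2} + \int_\Omega y_0\,\varphi(0,\cdot)\,dx
\]
on the completion $X$ of $L^2(\Omega)$ under the norm $\varphi_\tau\mapsto\|\rho\varphi\|_{L^{2k+2}(Q_\tau)}$. Coercivity follows from the observability inequality together with $y_0\in L^\infty\subset L^{(2k+2)/(2k+1)}$; since $X$ is reflexive and $J$ is strictly convex, a unique minimizer $\hat\varphi$ exists (this is the step that replaces the Hilbertian structure of classical HUM by reflexivity, possibly after a Barbu–Glowinski style penalization to handle the completion cleanly). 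The Euler–Lagrange equation, tested against an arbitrary adjoint solution $\psi$ with final data $\psi_\tau$, reads
\[
\iint_{Q_\tau}\rho^{2k+2}\,|\hat\varphi|^{2k}\hat\varphi\,\psi + \int_\Omega y_0\,\psi(0,\cdot)\,dx = 0,
\]
and since $2k$ is even one has $|\hat\varphi|^{2k}\hat\varphi=\hat\varphi^{2k+1}$, so I can read off the candidate control
\[
h_\tau := -\rho^{2k+2}\,\hat\varphi^{2k+1},\qquad h_\tau^{1/(2k+1)} = -\rho^{(2k+2)/(2k+1)}\,\hat\varphi.
\]
Pairing the identity against the primal heat equation driven by $h_\tau$ from $y_0$ yields $\int_\Omega y(\tau,\cdot)\psi_\tau=0$ for every $\psi_\tau$, hence $y(\tau,\cdot)=0$. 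The compact support of $\rho$ in $(0,\tau)\times\omega$ takes care, by construction, of the conditions $h_\tau(0,\cdot)=h_\tau(\tau,\cdot)=0$ and $\mathrm{supp}\,h_\tau(t,\cdot)\subset\subset\omega$.

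The main obstacle will be the last point: upgrading the sole a priori bound $\rho\hat\varphi\in L^{2k+2}(Q_\tau)$ to $h_\tau^{1/(2k+1)}\in X_{\tau,p}$ for every $p<\infty$, together with an $L^\infty$ estimate proportional to $\|y_0\|_{L^\infty}^{1/(2k+1)}$. The function $\hat\varphi$ solves the smooth backward heat equation, so interior parabolic regularity yields $C^\infty$-smoothness on every strip $[\delta,\tau-\delta]\times\overline{\Omega}$; the difficulty is localized at the temporal endpoints and at $\partial\omega$, where the weight $\rho$ must vanish at the right rate. Choosing $\rho$ so that $\rho^{(2k+2)/(2k+1)}$ is itself smooth and compactly supported in $(0,\tau)\times\omega$ (for instance by taking $\rho$ of exponential form built directly from the Carleman weights, so that arbitrary fractional powers remain smooth and vanish to infinite order) allows the $L^p$ maximal regularity of the heat operator to promote $\rho^{(2k+2)/(2k+1)}\hat\varphi$ to $X_{\tau,p}$ for every $p<\infty$. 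For the $L^\infty$ estimate, the quantitative observability first produces $\|\rho\hat\varphi\|_{L^{2k+2}(Q_\tau)}\leq C\|y_0\|_{L^\infty}^{1/(2k+1)}$, and a bootstrap through \Cref{wpl2} (using Stampacchia's method for the final $L^\infty$ step) converts it into the required sup-norm bound on $h_\tau^{1/(2k+1)}$. The principal technical challenge is to track these estimates through the algebraic identities forced by the odd exponent $2k+1$ and to verify that the Carleman-compatible weight is simultaneously compatible with every step of the bootstrap.
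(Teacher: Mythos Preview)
Your outline is the paper's proof: a HUM-type variational problem in $L^{2k+2}$ yields $h_\tau^{1/(2k+1)}=(\text{Carleman weight})\cdot\chi\cdot(\text{adjoint heat solution})$, and a parabolic bootstrap then places this in $\bigcap_p X_{\tau,p}$.

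Two places where the paper is sharper and your sketch would not go through as written. First, the paper does \emph{not} minimize over $\varphi_\tau$ in an abstract completion. It minimizes a \emph{penalized} functional of the control,
\[
J_\varepsilon(h)=\tfrac1q\iint e^{(q/2)s\rho\eta}(s\eta)^{-3q/2}|h|^{q}+\tfrac1{q\varepsilon}\|\zeta(T)\|_{L^q}^q,\qquad q=\tfrac{2k+2}{2k+1},
\]
over a weighted $L^q$ space; Euler--Lagrange then gives $(h^\varepsilon)^{1/(2k+1)}=e^{-(q/2)s\rho\eta}(s\eta)^{3q/2}\chi\,\varphi^\varepsilon$ with $\varphi^\varepsilon$ a genuine adjoint heat solution, and the limit $\varepsilon\to0$ is taken only \emph{after} the regularity bootstrap has been carried out uniformly in $\varepsilon$. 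Your parenthetical ``possibly after a Barbu--Glowinski style penalization'' is not optional here: without it $\hat\varphi$ lives in a completion where one cannot invoke parabolic regularity. (Minor slip: the observability you need is $\|\varphi(0,\cdot)\|_{L^{2k+2}}\le C\|\text{weight}\cdot\varphi\|_{L^{2k+2}((0,\tau)\times\omega)}$; the left-hand exponent $(2k+2)/(2k+1)$ in your display is the wrong one.)

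Second, the regularity step is not a single application of maximal regularity after choosing a sufficiently nice weight. The commutator of $\partial_t-\Delta$ with the weight produces $\nabla\varphi^\varepsilon$, for which you have no a priori $L^p$ bound from $\|\rho\hat\varphi\|_{L^{2k+2}}$ alone. The paper's $L^{2k+2}$ Carleman inequality controls both $|\varphi|^{2k+2}$ \emph{and} $|\nabla\varphi|^{2k+2}$ (with different powers of $s\eta$) on all of $Q_\tau$, and this gradient bound is what launches the bootstrap: one sets $\psi_n=(s\eta)^{-2}\psi_{n-1}$, checks that $-\partial_t\psi_n-\Delta\psi_n=a_n\psi_{n-1}+(s\eta)^{-1}\nabla\psi_{n-1}\cdot\nabla\rho$, and iterates maximal $L^{p_{n-1}}$ regularity together with Sobolev embeddings until $p_n=\infty$, losing a factor $(s\eta)^{-2}$ at each pass. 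Your appeal to interior smoothness plus a well-chosen weight does not substitute for this quantitative iteration; in particular, the reference to Stampacchia via \Cref{wpl2} is the wrong tool --- it is \Cref{wplp} (maximal regularity) and \Cref{injsobo} that drive the argument.
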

\begin{rmk}
\Cref{lemcontreg} extends \Cref{nullheat}. Its proof is inspired by the \textit{Penalized Hilbert Uniqueness Method} introduced by Barbu (see \cite{B2}). It is based on the Reflexive Uniqueness Method.
\end{rmk}
\begin{rmk}\label{rmk2hlinfty}
If we assume that \eqref{OmegaC2a} holds true, then we can replace $\bigcap\limits_{p \in [2, +\infty)} X_{\tau,p}$ by $C^{1,2}(\overline{Q_T})$. It easily gives the proof of \Cref{rmkhLinfty} by adapting the proof of \Cref{controlT}.
\end{rmk}
Before proving this proposition (whose proof is reported to \Cref{existencecontrolodd}), we apply it to our problem.

\begin{proof}
We apply \Cref{lemcontreg} with $(0,\tau)\leftarrow(T/2,T)$, $y_0 \leftarrow v_1(T/2,.) \in L^{\infty}(\Omega)$. Then, there exists a control $H\in L^{\infty}((T/2,T)\times\Omega)$ such that 
\begin{align}
&H^{\frac{1}{2k+1}} \in \bigcap_{p \in [2, +\infty)} X_{(T/2,T),p},\label{Hreg}\\
&\forall p \in [2,+\infty),\ \norme{H^{\frac{1}{2k+1}}}_{X_{(T/2,T),p}} \leq C_{p} \norme{v_1(T/2,.)}_{L^{\infty}(\Omega)}^{1/(2k+1)},\label{Hregesti}\\
& H(T/2,.) = H(T,.) = 0\label{Hnul},\\
& \forall t \in [T/2,T],\ supp(H(t,.)) \subset\subset \omega\label{suppH},
\end{align}
and the solution $v_2$ of 
\begin{equation}
\left\{
\begin{array}{l l}
\partial_t v_2 -  \Delta v_2 = H  &\mathrm{in}\ (T/2,T)\times\Omega,\\
v_2= 0&\mathrm{on}\ (T/2,T)\times\partial\Omega,\\
v_2(T/2,.)=v_1(T/2,.)& \mathrm{in}\ \Omega,
\end{array}
\right.
\label{systemevT}
\end{equation}
satisfies \begin{equation}v_2(T,.) = 0.\label{nulvT}\end{equation}
From \eqref{Hreg} and a Sobolev embedding (see for instance \cite[Theorem 1.4.1]{WYW} or \cite[Lemma 3.3, page 80]{LSU}), we set 
\begin{equation}
u_2 := H^{\frac{1}{2k+1}} \in \left(\bigcap_{p \in [2, +\infty)} X_{(T/2,T),p}\right) \subset L^{\infty}((T/2,T)\times\Omega).
\label{defu2}
\end{equation}
From \eqref{Hnul} and \eqref{defu2}, we have
\begin{equation}
u_2(T/2,.)= u_2(T,.) = 0.
\label{u2nulext}
\end{equation}
Then, we set, from \eqref{defu2}
\begin{equation}
 h_2:= \partial_t u_2 - \Delta u_2 \in \bigcap_{p \in [2, +\infty)}L^p((T/2,T)\times\Omega),
\label{defh2}
\end{equation}
which is supported in $(T/2,T)\times\omega)$ by \eqref{suppH}.
Moreover, from \eqref{Hregesti} and \eqref{estiv1T/2}, we get
\begin{equation}\label{esticontrolTbis}
\forall p \in [2,+\infty),\ \norme{h_2}_{L^p((T/2,T)\times\Omega)} \leq C_p\left( \norme{u_0}_{L^{\infty}(\Omega)} + \norme{v_0}_{L^{\infty}(\Omega)}^{1/(2k+1)}\right).
\end{equation}
By using \eqref{suppH}, \eqref{systemevT}, \eqref{nulvT}, \eqref{defu2}, \eqref{u2nulext}, \eqref{defh2} and \eqref{esticontrolTbis}, we check that $((u_2,v_2),h_2)$ satisfies \Cref{controlT}.
\end{proof}

\subsection{Strategy of control in the whole interval $(0,T)$}
We gather \Cref{controlT/2} and \Cref{controlT} to find a control which steers $(u,v)$ to $(0,0)$ in time $T$.
\begin{prop}\label{strategyofcontrol0-T}
There exists a control $h \in \bigcap_{p \in [2, +\infty)}L^p(Q_T)$ satisfying 
\begin{equation}
\forall p \in [2,+\infty),\ \norme{h}_{L^{p}((0,T)\times \Omega)} \leq C_p\left( \norme{u_0}_{L^{\infty}(\Omega)} + \norme{v_0}_{L^{\infty}(\Omega)}^{1/(2k+1)}\right),
\label{esticontrolwholeint}
\end{equation} 
such that the solution $(u,v) \in L^{\infty}(Q_T)$ of \eqref{systemeodd} satisfies $(u,v)(T,.) = (0,0)$.
\end{prop}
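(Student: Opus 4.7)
The plan is to simply concatenate the two controls produced by \Cref{controlT/2} and \Cref{controlT}. First, I would apply \Cref{controlT/2} to the initial data $(u_0,v_0)$, which yields a control $h_1 \in L^\infty((0,T/2)\times\Omega)$ and a solution $(u_1,v_1) \in L^\infty((0,T/2)\times\Omega)^2$ with $u_1(T/2,\cdot) = 0$ and with the $L^\infty$ bound \eqref{estiv1T/2} on $v_1(T/2,\cdot)$. Then, using $v_1(T/2,\cdot) \in L^\infty(\Omega)$ as new data for the second segment, I would apply \Cref{controlT} to obtain a control $h_2 \in \bigcap_{p\in[2,\infty)} L^p((T/2,T)\times\Omega)$ and a solution $(u_2,v_2) \in L^\infty((T/2,T)\times\Omega)^2$ starting at $(0,v_1(T/2,\cdot))$ and reaching $(0,0)$ at time $T$.

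Next I would define $h \in L^2(Q_T)$ by setting $h = h_1$ on $(0,T/2)\times\Omega$ and $h = h_2$ on $(T/2,T)\times\Omega$, and similarly define $(u,v)$ by gluing $(u_1,v_1)$ and $(u_2,v_2)$. The crucial compatibility is that the two pieces match in $L^2(\Omega)$ at $t=T/2$: by construction $u_1(T/2,\cdot) = 0 = u_2(T/2,\cdot)$ and $v_2(T/2,\cdot) = v_1(T/2,\cdot)$. Thanks to the embedding $W_{T/2}\hookrightarrow C([0,T/2];L^2(\Omega))$ of \Cref{injclassique} (and the analogous embedding on $(T/2,T)$), the concatenated pair $(u,v)$ belongs to $W_T^2$, and summing the variational identities \eqref{formvarNL1}--\eqref{formvarNL2} on the two subintervals (with test functions restricted to each half) yields the global variational formulation on $(0,T)$. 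Hence $(u,v)\in (W_T\cap L^\infty(Q_T))^2$ is the (unique, by \Cref{defpropsolNL}) solution of \eqref{systemeodd} associated with $h$, and it clearly satisfies $(u,v)(T,\cdot)=(0,0)$.

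It remains to verify the estimate \eqref{esticontrolwholeint}. For any $p \in [2,+\infty)$, since $h_1 \in L^\infty$, Hölder's inequality and \eqref{esticontrolT/2} give
\begin{equation*}
\norme{h_1}_{L^p((0,T/2)\times\Omega)} \leq |Q_{T/2}|^{1/p}\,\norme{h_1}_{L^\infty((0,T/2)\times\Omega)} \leq C_p\,\norme{u_0}_{L^\infty(\Omega)},
\end{equation*}
while \eqref{esticontrolT} furnishes the corresponding bound for $\norme{h_2}_{L^p((T/2,T)\times\Omega)}$. Adding these two contributions yields exactly \eqref{esticontrolwholeint}.

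No step is really delicate here: the argument is purely a gluing, and the only point that deserves care is the verification that the glued pair is a genuine weak solution on the whole interval, which reduces to the time-continuity at $t=T/2$ provided by the embedding $W_T \hookrightarrow C([0,T];L^2(\Omega))$. All the substantive work has already been absorbed into \Cref{controlT/2} and \Cref{controlT}.
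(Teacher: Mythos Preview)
Your proposal is correct and follows essentially the same approach as the paper: both proofs glue $((u_1,v_1),h_1)$ from \Cref{controlT/2} with $((u_2,v_2),h_2)$ from \Cref{controlT} and read off the $L^p$-bound on $h$ from \eqref{esticontrolT/2} and \eqref{esticontrolT}. Your write-up is slightly more explicit about the time-continuity at $t=T/2$ and the use of H\"older for $h_1$, but these are exactly the details the paper leaves implicit.
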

\begin{proof}
Let $((u_1,v_1),h_1) \in L^{\infty}((0,T/2)\times\Omega)^3$ as in \Cref{controlT/2}.\\
\indent  Let ${((u_2,v_2),h_2) \in L^{\infty}((T/2,T)\times\Omega)^2 \times \bigcap_{p \in [2, +\infty)} L^p((T/2,T)\times\Omega)}$ as in \Cref{controlT}.\\
\indent We define $((u,v),h) \in (W_T \cap L^{\infty}(Q_T))^2 \times \bigcap_{p \in [2, +\infty)}L^p(Q_T)$ by
\begin{align*}
\begin{array}{llll}
u= u_1& \text{in}\ [0,T/2]\times\Omega,& u=u_2&\text{in}\ [T/2,T]\times\Omega,\\
v= v_1& \text{in}\ [0,T/2]\times\Omega,& v=v_2& \text{in}\ [T/2,T]\times\Omega,\\
h = h_1& \text{in}\ (0,T/2)\times\Omega,& h=h_2& \text{in}\ (T/2,T)\times\Omega.
\end{array}
\end{align*}
\indent We deduce from \Cref{controlT/2} and \Cref{controlT} that $(u,v) \in (W_T \cap L^{\infty}(Q_T))^2$ is the solution of \eqref{systemeodd} associated to the control $h$ and $(u,v)(T,.) = (0,0)$. Moreover, from the bounds \eqref{esticontrolT/2} and \eqref{esticontrolT}, we get the bound \eqref{esticontrolwholeint}.
\end{proof}
\Cref{strategyofcontrol0-T} proves our first main result: \Cref{mainresult1}.
\section{A control for the heat equation which is an odd power}\label{existencecontrolodd}

The goal of this section is to prove \Cref{lemcontreg}. We assume in the following that $\tau = T$. First, we prove a \textit{new} Carleman estimate in $L^{2k+2}$ for the heat equation. This type of inequality comes from the \textit{usual Carleman inequality in $L^2$} and \textit{parabolic regularity}. Then, we get the existence of a control for the heat equation such that $h^{\frac{1}{2k+1}}$ is regular by considering a \textit{penalized problem in $L^{\frac{2k+2}{2k+1}}$}, which is a generalization of the \textit{usual Penalized Hilbert Uniqueness Method}.

\subsection{A Carleman inequality in $L^{2k+2}$}\label{carlestimate2k+2subsection}

\subsubsection{Maximal regularity and Sobolev embeddings}
We have the following parabolic regularity result and Sobolev embedding lemma.
\begin{prop}\label{wplp}\cite[Theorem 2.1]{DHP}\\
Let $1 < p < +\infty$, $g \in L^p(Q_T)$, $y_0 \in C_0^{\infty}(\Omega)$. The following Cauchy problem admits a unique solution $y \in X_{T,p}$ (see \Cref{defiXp})
\[
\left\{
\begin{array}{l l}
\partial_t y -  \Delta y= g&\mathrm{in}\ (0,T)\times\Omega,\\
y = 0 &\mathrm{on}\ (0,T)\times\partial\Omega,\\
y(0,.)=y_0 &\mathrm{in}\  \Omega.
\end{array}
\right.
\]
Moreover, if $y_0=0$, there exists $ C >0$ independent of $g$ such that
\[ \norme{y}_{X_{T,p}} \leq C \norme{g}_{L^p(Q_T)}.\]
\end{prop}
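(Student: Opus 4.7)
The plan is to establish \Cref{wplp} by appealing to maximal $L^p$-regularity for the Dirichlet Laplacian on $\Omega$, which is the operator-theoretic content quoted from Denk--Hieber--Prüss. I would organise the argument in four steps.

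\textbf{Step 1 (Reduction to homogeneous initial data).} Let $A$ denote the realisation of $-\Delta$ on $L^p(\Omega)$ with homogeneous Dirichlet boundary conditions, so that $A$ is sectorial with domain $D(A)=W^{2,p}\cap W_0^{1,p}(\Omega)$. Since $y_0\in C_0^\infty(\Omega)\subset D(A^k)$ for every $k$, the function $z(t):=e^{-tA}y_0$ is smooth in $t\ge0$ by analyticity of the semigroup and belongs to $X_{T,p}$. Writing $\widetilde y:=y-z$, it suffices to produce $\widetilde y\in X_{T,p}$ solving $\partial_t\widetilde y+A\widetilde y=g$ with $\widetilde y(0)=0$, together with the bound $\|\widetilde y\|_{X_{T,p}}\le C\|g\|_{L^p(Q_T)}$.

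\textbf{Step 2 (Duhamel and the key estimate).} The natural candidate is $\widetilde y(t)=\int_0^t e^{-(t-s)A}g(s)\,ds$. The heart of the proposition is then the inequality
\[
\|\partial_t\widetilde y\|_{L^p(Q_T)}+\|A\widetilde y\|_{L^p(Q_T)}\le C\|g\|_{L^p(Q_T)}.
\]
Once this is established, elliptic $L^p$-regularity applied fibrewise in time (using $D(A)=W^{2,p}\cap W_0^{1,p}(\Omega)$) upgrades $A\widetilde y\in L^p(Q_T)$ to $\widetilde y\in L^p(0,T;W^{2,p}\cap W_0^{1,p}(\Omega))$, which together with $\partial_t\widetilde y\in L^p(0,T;L^p(\Omega))$ yields membership in $X_{T,p}$.

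\textbf{Step 3 (Maximal regularity via $H^\infty$-calculus).} The estimate in Step~2 is exactly the statement that $A$ enjoys maximal $L^p$-regularity on $(0,T)$. I would obtain it through the following chain of classical results: $L^p(\Omega)$ is a UMD space for $1<p<\infty$; the Dirichlet Laplacian on a $C^2$ bounded domain admits a bounded $H^\infty$-calculus on $L^p(\Omega)$ of angle less than $\pi/2$ (proved via pseudodifferential parametrix constructions, or via Gaussian upper bounds on the heat kernel); by the Kalton--Weis theorem, bounded $H^\infty$-calculus on a UMD space implies $\mathcal R$-sectoriality; and by Weis's characterisation, $\mathcal R$-sectoriality is equivalent to maximal $L^p$-regularity.

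\textbf{Step 4 (Uniqueness and main obstacle).} Uniqueness of the $X_{T,p}$-solution follows by applying the maximal-regularity estimate to the difference of two solutions sharing the same data. \emph{The main obstacle} is Step~3: establishing the bounded $H^\infty$-calculus, or equivalently the $\mathcal R$-sectoriality, of the Dirichlet Laplacian on a general $C^2$ bounded domain requires substantial harmonic-analytic machinery, and this is precisely the content that the paper offloads to Denk--Hieber--Prüss rather than reproving.
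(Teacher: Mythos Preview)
The paper does not prove this proposition at all: it is stated with a direct citation to \cite[Theorem 2.1]{DHP} and used as a black box throughout. Your sketch is a correct outline of the maximal $L^p$-regularity theory underlying the Denk--Hieber--Pr\"uss result, and your own Step~4 already identifies that the substantive work (bounded $H^\infty$-calculus, $\mathcal R$-sectoriality) is exactly what the paper delegates to that reference.
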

\begin{lem}\label{injsobo}\cite[Lemma 3.3, page 80]{LSU}\\
Let $p\in [1,+\infty)$, we have
\[ 
X_{T,p} \hookrightarrow 
\left\{
\begin{array}{c l}
L^{\frac{(N+2)p}{N+2-p}}(0,T;W_0^{1,\frac{(N+2)p}{N+2-p}}(\Omega)) & \mathrm{if}\  p < N+2,\\
L^{2p}(0,T;W_0^{1,2p}(\Omega)) & \mathrm{if}\  p = N+2,\\
L^{\infty}(0,T;W_0^{1,\infty}(\Omega)) & \mathrm{if}\  p > N+2.
\end{array}
\right.
\]
\end{lem}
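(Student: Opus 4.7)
The plan is to recognize $X_{T,p}$ as the anisotropic (parabolic) Sobolev space in which one time derivative is weighted like two spatial derivatives, and to derive the embeddings as the parabolic analogues of the classical Sobolev embedding theorem. In the parabolic scaling $(t,x)\mapsto(\lambda^2 t,\lambda x)$ the cylinder $Q_T$ has effective dimension $N+2$, and the three cases of the lemma correspond precisely to the subcritical, critical and supercritical regimes of the $W^{1,p}$ embedding in dimension $N+2$, with critical exponent $p^\star=(N+2)p/(N+2-p)$.

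The first step is a reduction to the full space $\R\times\R^N$. Using the $C^2$ regularity of $\partial\Omega$, I would locally flatten the boundary and extend any $y\in X_{T,p}$ by odd reflection across $\partial\Omega$ (so that the Dirichlet trace is preserved); then extend in time by even reflection across $t=0$ and $t=T$ and multiply by a temporal cutoff, obtaining a compactly supported function $\tilde y$ with
\[
\|\partial_t\tilde y\|_{L^p(\R^{N+1})}+\|\nabla^2\tilde y\|_{L^p(\R^{N+1})}+\|\tilde y\|_{L^p(\R^{N+1})}\lesssim \|y\|_{X_{T,p}}.
\]
The resulting $\tilde y$ satisfies $\tilde y=K\ast\Phi$ for some $\Phi\in L^p(\R^{N+1})$, where $K$ is the Fourier multiplier associated to the symbol $(1+|\tau|+|\xi|^2)^{-1}$. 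This $K$ is an anisotropic Bessel potential of order $2$ on a space of parabolic dimension $N+2$, and one verifies the pointwise bound
\[
|\nabla K(t,x)|\lesssim (|t|^{1/2}+|x|)^{-(N+1)}
\]
with exponential decay at infinity, so that $\nabla K$ behaves as a parabolic Riesz potential of order $1$.

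The Hardy-Littlewood-Sobolev inequality in its mixed-norm (Benedek-Calderón-Panzone) parabolic form then yields $\|\nabla\tilde y\|_{L^q(\R^{N+1})}\lesssim\|\Phi\|_{L^p(\R^{N+1})}$ with $1/q=1/p-1/(N+2)$ whenever $p<N+2$; the homogeneity of $\nabla K$ under the parabolic scaling forces the same exponent $q$ to appear in both the time and space slots, so that after restricting to $Q_T$ and undoing the reflection one recovers the first case $L^q(0,T;W_0^{1,q}(\Omega))$. For $p=N+2$ the kernel $\nabla K$ is logarithmically critical, and a Trudinger-type bound followed by Hölder's inequality on the bounded cylinder $Q_T$ produces the weaker $L^{2p}(0,T;W_0^{1,2p}(\Omega))$ statement actually claimed. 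For $p>N+2$ one checks by direct integration that $\nabla K\in L^{p'}(\R^{N+1})$, and Young's inequality immediately gives $\nabla\tilde y\in L^\infty$, hence the embedding into $L^\infty(0,T;W_0^{1,\infty}(\Omega))$. The main technical obstacle is upgrading HLS from a plain $L^q(Q_T)$ bound to the mixed-norm form $L^q_tL^q_x$ with matching exponents in each slot; this is settled either by Marcinkiewicz interpolation on the parabolic Riesz potential between the trivial $L^1\to L^1$ and $L^\infty\to L^\infty$ estimates, or by invoking the anisotropic pointwise kernel bound above together with Minkowski's integral inequality to separate the variables.
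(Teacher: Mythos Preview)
The paper does not prove this lemma; it is quoted verbatim from Lady\v{z}enskaja--Solonnikov--Ural'ceva, so there is no in-paper argument to compare against. Your overall strategy---pass to the whole space, represent the function as a parabolic Bessel potential of order $2$ applied to an $L^p$ datum, and read off the mapping properties of the order-$1$ potential $\nabla K$ in parabolic homogeneous dimension $N+2$---is the classical route and is essentially what lies behind the cited reference.

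That said, your reduction step contains a real gap. Odd reflection across a flattened Dirichlet boundary preserves $W^{1,p}_0$ but \emph{not} $W^{2,p}$: with local normal variable $x_N$, the reflected function $\tilde y(x',-x_N)=-y(x',x_N)$ has $\partial_{x_N}\tilde y$ equal to the even extension of $\partial_{x_N}y$, and the distributional $x_N$-derivative of that even extension carries a singular layer on $\{x_N=0\}$ unless $\partial_{x_N}^2 y(x',0)=0$, which nothing in $X_{T,p}$ guarantees. Hence $\nabla^2\tilde y\notin L^p(\R^{N+1})$ in general, and your identity $\tilde y=K\ast\Phi$ with $\Phi\in L^p$ breaks down. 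The repair is standard but must be made explicit: either use a higher-order (Seeley-type) reflection $\tilde y(x',-x_N)=\sum_j a_j\,y(x',\lambda_j x_N)$ with coefficients matching traces through second order, or avoid extension altogether by writing $y(t)=S(t)y(0)+\int_0^t S(t-s)\bigl(\partial_t y-\Delta y\bigr)(s)\,ds$ via the Dirichlet heat semigroup on $\Omega$ and applying the Gaussian gradient bound $|\nabla_x p_\Omega(t,x,z)|\lesssim t^{-(N+1)/2}e^{-c|x-z|^2/t}$ directly. Either way you recover the pointwise estimate $|\nabla K|\lesssim(|t|^{1/2}+|x|)^{-(N+1)}$ on which the rest of your argument correctly hinges. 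A smaller point: the target $L^q(0,T;W^{1,q}_0(\Omega))$ is a diagonal $L^q(Q_T)$ statement for $\nabla y$, not a mixed norm, so your closing discussion of ``mixed-norm HLS'' is unnecessary---plain parabolic Hardy--Littlewood--Sobolev with $q=(N+2)p/(N+2-p)$ already gives what you need.
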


\subsubsection{Carleman estimates}
We define
\begin{equation}
\forall t \in (0,T),\ \eta(t) := \frac{1}{t(T-t)}.
\end{equation}
Let $\omega_1$ be a nonempty open subset such that
\begin{equation}
\omega_1 \subset \subset \omega.
\end{equation}
Let us recall the usual Carleman estimate in $L^2$ (see \cite[Lemma 8]{CGR} or \cite{FCG} for a general introduction to Carleman estimates).
\begin{prop}\label{carl2lem}\cite[Lemma 8]{CGR}\\
There exist $C >0$ and a function $\rho \in C^2(\overline{\Omega};(0,+\infty))$ such that\\ for every $\varphi_T \in C_0^{\infty}(\Omega)$ and for every $s \geq C$, the solution $\varphi \in X_{T,2}$ of
\begin{equation}
\left\{
\begin{array}{l l}
-\partial_t \varphi - \Delta\varphi = 0 &(0,T)\times\Omega,\\
\varphi = 0 &(0,T)\times\partial\Omega,\\
\varphi(T,.)=\varphi_T &\Omega,
\end{array}
\right.
\label{equationphiadj}
\end{equation}
satisfies
\begin{align}
\label{carl2}
&\int\int_{(0,T)\times\Omega} e^{-s \rho(x) \eta(t)}((s\eta)^3 |\varphi|^2 + (s \eta) |\nabla \varphi |^2) dxdt\\
& \leq C\int\int_{(0,T)\times\omega_1} e^{-s \rho(x) \eta(t)}(s\eta)^3|\varphi|^2 dxdt\notag.
\end{align}
\end{prop}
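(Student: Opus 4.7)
The plan is to follow the classical Fursikov--Imanuvilov approach to Carleman estimates for the heat operator (a detailed exposition appears in \cite{FCG}).

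First I would produce, by a standard construction, a weight $\rho \in C^2(\overline{\Omega};(0,+\infty))$ with $|\nabla \rho| \geq c > 0$ on $\overline{\Omega}\setminus \omega_0$ for some open set $\omega_0$ with $\omega_0 \subset\subset \omega_1$; this geometric property is the single ingredient that lets the argument control everything outside $\omega_0$. I would then conjugate the backward heat operator with this weight: setting $\psi := e^{-s\rho\eta/2}\varphi$, one computes that $\psi$ satisfies $P_1\psi + P_2\psi = R\psi$, where $P_1$ gathers the formally symmetric part (roughly $-\Delta\psi$ plus the multiplier $s^2\eta^2|\nabla\rho|^2\psi$), $P_2$ gathers the antisymmetric part (roughly $-\partial_t\psi$ together with the transport term $2s\eta\,\nabla\rho \cdot \nabla\psi$), and $R\psi$ collects genuinely lower-order contributions coming from $\partial_t\eta$ and $\Delta\rho$.

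Squaring and integrating, $\|P_1\psi\|_{L^2}^2 + \|P_2\psi\|_{L^2}^2 + 2(P_1\psi,P_2\psi)_{L^2} = \|R\psi\|_{L^2}^2$, and the core of the estimate lies in the cross term. Expanding it and integrating by parts in both $t$ and $x$, one extracts positive integrals of $(s\eta)^3|\nabla\rho|^4|\psi|^2$ and $(s\eta)|\nabla\rho|^2|\nabla\psi|^2$, modulo boundary-in-$t$ terms (which vanish because $e^{-s\rho\eta}$ decays to $0$ as $t\to 0^+$ and $t\to T^-$), boundary-in-$x$ terms (which vanish because $\varphi=0$ on $\partial\Omega$, hence $\psi=0$ on $\partial\Omega$), and a residual term localized in $\omega_0$ coming from the zone where $|\nabla\rho|$ may fail to be bounded below. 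For $s$ large enough, $\|R\psi\|_{L^2}^2$ together with every lower-order error produced by the integrations by parts is absorbed into the principal $(s\eta)^3|\psi|^2$ contribution.

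Returning to $\varphi$ by undoing the conjugation yields the announced inequality except that a $(s\eta)|\nabla\varphi|^2$ local term still sits on the right-hand side over $\omega_0$. To eliminate it, pick $\chi \in C_c^\infty(\omega_1)$ with $\chi \equiv 1$ on $\omega_0$, multiply \eqref{equationphiadj} by $\chi^2 (s\eta) e^{-s\rho\eta}\varphi$ and integrate: the Laplacian contribution produces exactly $\iint \chi^2(s\eta) e^{-s\rho\eta}|\nabla\varphi|^2$, while the remaining terms (derivatives hitting $\chi^2$, $s\eta$ or $e^{-s\rho\eta}$) are controlled via Young's inequality by $\iint_{(0,T)\times\omega_1} (s\eta)^3 e^{-s\rho\eta}|\varphi|^2$ plus an $\varepsilon$-fraction of the already-proved global left-hand side. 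This yields \eqref{carl2}. The main obstacle is pure bookkeeping in the cross-term computation: one must verify that every error term created by the integrations by parts carries a power of $s\eta$ strictly smaller than $(s\eta)^3$ (resp.\ $s\eta$ for gradient terms), so that all such errors are absorbable for $s \geq C$.
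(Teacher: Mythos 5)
The paper does not prove this proposition: it is quoted verbatim from the literature (\cite[Lemma 8]{CGR}, itself a variant of the Fursikov--Imanuvilov estimate), so there is no internal proof to compare against. Your sketch follows the standard route for proving such estimates, and its overall architecture (conjugation, symmetric/antisymmetric splitting, cross-term expansion, elimination of the local gradient term by a cutoff multiplier) is the correct one.

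There is, however, one point where your accounting would fail as literally described. You claim that the whole argument rests on the single geometric property $|\nabla\rho|\geq c>0$ outside $\omega_0$, and that one need only check that every error term produced by the integrations by parts carries a power of $s\eta$ \emph{strictly} smaller than $(s\eta)^3$, so that all errors are absorbed by taking $s$ large. This is not true: the cross term between the multiplier $s^2\eta^2|\nabla\rho|^2\psi$ in $P_1$ and the transport term $s\eta\,\nabla\rho\cdot\nabla\psi$ in $P_2$ produces, after integration by parts, contributions proportional to $(s\eta)^3\bigl(|\nabla\rho|^2\Delta\rho + 2D^2\rho(\nabla\rho,\nabla\rho)\bigr)|\psi|^2$, i.e.\ terms of exactly the same order $(s\eta)^3$ as the principal positive term $(s\eta)^3|\nabla\rho|^4|\psi|^2$. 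No choice of $s$ absorbs these. They are controlled only because $\rho$ is not an arbitrary function with non-vanishing gradient but is constructed as (a constant minus) $e^{\lambda\rho_0}$ for a Morse-type function $\rho_0$ without critical points outside $\omega_0$ and for $\lambda$ fixed sufficiently large: then the sign-definite piece $D^2\rho(\nabla\rho,\nabla\rho)\sim -\lambda^4 e^{3\lambda\rho_0}|\nabla\rho_0|^4$ dominates the remaining same-order-in-$s\eta$ contributions, which carry strictly fewer powers of $\lambda$. In other words, the absorption is a two-parameter mechanism ($\lambda$ large first, then $s$ large), and the pseudoconvexity supplied by the exponential structure of the weight is an essential second ingredient, not a cosmetic one. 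With that correction, your sketch matches the classical proof of the cited lemma.
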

From now on,  $\rho$ is as in \Cref{carl2lem}. We will deduce from the above $L^2$-Carleman estimate the following $L^{2k+2}$-Carleman estimate.
\begin{theo}\label{carl2+2cor}There exist $C >0$ and $m \in (0,+\infty)$ such that for every $\varphi_T \in C_0^{\infty}(\Omega)$ and for every $s \geq C$, the solution $\varphi \in X_{T,2k+2}$ of \eqref{equationphiadj} satisfies
\begin{align}\label{carl2k+2}
& \int\int_{(0,T)\times\Omega} e^{-(k+1)s \rho(x) \eta(t)}((s\eta)^{-(k+1)m} |\varphi|^{2k+2} + (s \eta)^{-(k+1)(m+2)} |\nabla \varphi |^{2k+2})dxdt\\
& \leq C\int\int_{(0,T)\times\omega_1} e^{-(k+1)s \rho(x) \eta(t)}(s\eta)^{3(k+1)}|\varphi|^{2k+2} dxdt.\notag
\end{align}
\end{theo}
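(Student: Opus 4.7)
The plan is to bootstrap from the $L^2$-Carleman estimate of Proposition~\ref{carl2lem} to the target $L^{2k+2}$ estimate by iteratively combining parabolic maximal regularity (Proposition~\ref{wplp}) with the Sobolev-type embedding of Lemma~\ref{injsobo}. Informally, the $L^2$-Carleman controls a weighted $L^2(H^1_0)$-norm of $\varphi$ by an integral over $\omega_1$; via maximal regularity this upgrades to a weighted $X_{T,2}$-estimate of an auxiliary function built from $\varphi$, which Sobolev-embeds into $L^{p_1}(W^{1,p_1}_0)$ for some $p_1>2$; iterating finitely many times lands in $L^{2k+2}(W^{1,2k+2}_0)$.

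More concretely, fix parameters $\alpha > 0$ and $\beta\in\R$ to be tuned and set
$$\psi(t,x) := e^{-\alpha s\rho(x)\eta(t)}(s\eta(t))^{-\beta}\varphi(t,x).$$
Because $\eta(t)\to+\infty$ at $t=0$ and $t=T$, the function $\psi$ extends by $0$ at these times and satisfies a backward heat equation
$$-\partial_t\psi - \Delta\psi = G_s \text{ in } Q_T,\qquad \psi = 0 \text{ on } (0,T)\times\partial\Omega,$$
where $G_s$ is a finite sum of terms $e^{-\alpha s\rho\eta}(s\eta)^{\gamma}\varphi$ and $e^{-\alpha s\rho\eta}(s\eta)^{\gamma'}\nabla\varphi$ with smooth bounded coefficients (depending on $\rho$ and its derivatives). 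Tuning $\alpha = 1/2$ and $\beta$ so that the exponents $\gamma,\gamma'$ match the weights $(s\eta)^{3/2}$ and $(s\eta)^{1/2}$ in the LHS of \eqref{carl2}, Proposition~\ref{carl2lem} gives $\|G_s\|_{L^2(Q_T)}^2 \leq C \int\int_{(0,T)\times\omega_1} e^{-s\rho\eta}(s\eta)^3|\varphi|^2$; Proposition~\ref{wplp} with $p=2$ then yields $\|\psi\|_{X_{T,2}}^2 \leq C\|G_s\|_{L^2(Q_T)}^2$, and Lemma~\ref{injsobo} embeds $X_{T,2}\hookrightarrow L^{p_1}(W^{1,p_1}_0)$ with $p_1 = 2(N+2)/N > 2$.

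The next iterations proceed identically but with $L^{p_i}$ in place of $L^2$: starting from the weighted $L^{p_i}$-bound on $\varphi$ and $\nabla\varphi$ produced at step $i$, construct a new $\psi^{(i+1)}$ with adjusted parameters $(\alpha_{i+1}, \beta_{i+1})$, apply Proposition~\ref{wplp} in $L^{p_i}$, and Sobolev-embed to $L^{p_{i+1}}$ via Lemma~\ref{injsobo}. Since $p_{i+1}>p_i$ and the Sobolev exponent tends to $\infty$ (or jumps to the regime $p\geq N+2$), after a finite number $j_0 = j_0(N,k)$ of steps one has $p_{j_0}\geq 2k+2$, yielding the weighted $L^{2k+2}(W^{1,2k+2}_0)$-control of $\varphi$. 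The exponents $\alpha_i, \beta_i$ are chosen so that their cumulative contributions produce the exponential factor $e^{-(k+1)s\rho\eta}$ and the polynomial losses $(s\eta)^{-(k+1)m}$, $(s\eta)^{-(k+1)(m+2)}$ on the LHS, and $(s\eta)^{3(k+1)}$ on the RHS of \eqref{carl2k+2}.

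The main technical obstacle is the careful bookkeeping of the exponential and polynomial weights through the iteration. Each commutator $[\Delta,e^{-\alpha s\rho\eta}(s\eta)^{-\beta}]$ and $[\partial_t,(s\eta)^{-\beta}]$ generates lower-order terms whose powers of $s\eta$ must be absorbed by the Carleman weights at the preceding iteration; this dictates the choice of $(\alpha_i,\beta_i)$ and ultimately fixes $m=m(N,k)$. In particular, ensuring that the exponential factors accumulate to exactly $e^{-(k+1)s\rho\eta}$ on both sides while the RHS stays localized to $(0,T)\times\omega_1$ throughout the bootstrap is what constrains the procedure and determines the final form of \eqref{carl2k+2}.
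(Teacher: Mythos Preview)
Your bootstrap strategy---auxiliary weighted functions solving a backward heat equation, $L^2$-Carleman to control the first source term, then maximal regularity plus the Sobolev embedding of Lemma~\ref{injsobo} to climb from $L^2$ to $L^{2k+2}$---is exactly the paper's approach. The paper sets $\psi_{-1}=e^{-s\rho\eta/2}(s\eta)^{3/2}\varphi$ and then $\psi_n=(s\eta)^{-2}\psi_{n-1}$, and the final exponent is $m=4n_0+1$ where $n_0$ is the number of Sobolev steps needed.

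One correction to your bookkeeping: the exponential parameter should \emph{not} vary across iterations. In the paper $\alpha=1/2$ is fixed throughout; only the polynomial weight $(s\eta)^{-\beta}$ is lowered by $(s\eta)^{-2}$ at each step. The factor $e^{-(k+1)s\rho\eta}$ in \eqref{carl2k+2} is not an accumulation of successive exponential shifts---it arises trivially at the end because $|e^{-s\rho\eta/2}\,\cdot\,|^{2k+2}=e^{-(k+1)s\rho\eta}|\,\cdot\,|^{2k+2}$. Keeping $\alpha$ fixed is what makes the source term $g_n$ expressible cleanly as $a_n\psi_{n-1}+(s\eta)^{-1}\nabla\psi_{n-1}\cdot\nabla\rho$ with $a_n\in L^\infty$; if you varied $\alpha_i$ the commutator $[\Delta,e^{-\alpha_i s\rho\eta}]$ would produce terms with a different exponential weight than the one controlled at the previous step, and the iteration would not close. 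The other small point you leave implicit is that the localized right-hand side also has its integrability exponent upgraded at each step, via the trivial embedding $L^{p_n}((0,T)\times\omega_1)\hookrightarrow L^{p_{n-1}}((0,T)\times\omega_1)$ on the bounded set $\omega_1$.
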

\begin{proof}
Let $\varphi_T \in C_0^{\infty}(\Omega)$, $\varphi \in X_{T,2k+2}$ be the solution of \eqref{equationphiadj} and $s \geq C$ where $C$ is as in \Cref{carl2lem}. \\
\indent By a standard parabolic regularity argument, one may deduce from the $L^2$-Carleman estimate \eqref{carl2} another Carleman type inequality in $L^{p_0}$ with $p_0 = 2(N+2)/N$. If $p_0 > 2k+2$, this estimate implies \eqref{carl2k+2}. Otherwise, we iterate this strategy.\\

\indent  \textbf{Step 1:}
We introduce the sequence $(\psi_n)_{n \geq -1}$,
\begin{equation}
\psi_{-1} := e^{-s \rho \eta /2} (s \eta)^{3/2} \varphi,\qquad \forall n \geq 0,\ \psi_{n} := (s \eta)^{-2} \psi_{n-1} := e^{-s \rho \eta /2}(s \eta)^{3/2 - 2 (n+1)} \varphi.
\label{defpsisuite}
\end{equation}
Then, we also define an increasing sequence $(p_n)_{n \geq -1}$ by
\begin{equation}
p_{-1} := 2,\qquad \forall n \geq 0,\   p_{n} :=
\left\{
\begin{array}{c l}
\frac{(N+2)p_{n-1}}{N+2-p_{n-1}} & \mathrm{if}\  p_{n-1} < N+2,\\
2 p_{n-1} & \mathrm{if}\  p_{n-1} = N+2,\\
+\infty & \mathrm{if}\  p_{n-1} > N+2.\\
\end{array}
\right.
\label{defsequencep}
\end{equation}
Clearly, there exists a unique integer $n_0$ such that
\begin{equation}
p_{n_{0}} > 2k+2 \geq p_{n_0 -1}.
\label{defn0}
\end{equation} 
We will need this technical lemma.
\begin{lem}\label{lemtechniquerec}
For every integer $n \in \N$,
\begin{equation}
\left\{
\begin{array}{l l}
- \partial_t \psi_{n} -  \Delta \psi_{n} = g_n &\mathrm{in}\ (0,T)\times\Omega,\\
\psi_{n}= 0&\mathrm{on}\ (0,T)\times\partial\Omega,\\
\psi_{n}(T,.)=0& \mathrm{in}\ \Omega,
\end{array}
\right.
\label{systemepsin}
\end{equation} 
with 
\begin{equation}
g_n(t,x) = a_n(t,x) \psi_{n-1} + (s \eta)^{-1} \nabla \psi_{n-1} . \nabla \rho, \qquad \text{where}\ \norme{a_n}_{L^{\infty}(Q_T)} \leq C_n.
\label{deffn}
\end{equation}
\end{lem}
\begin{proof} We prove \Cref{lemtechniquerec} by induction on $n \in \N$.\\
\indent We introduce the notation
\begin{equation}
\forall (t,x) \in (0,T)\times\Omega,\ \Theta(t,x) :=e^{-s \rho(x) \eta(t)/2} (s \eta(t))^{3/2}.
\label{defTheta}
\end{equation}
\indent \textit{Initialization}: For $n=0$, by using \eqref{defpsisuite}, \eqref{defTheta} and \eqref{equationphiadj}, we have
\begin{align}
&- \partial_t \psi_0 - \Delta \psi_0\notag\\
&= -\partial_t ((s \eta)^{-2} \psi_{-1}) - \Delta ((s \eta)^{-2} \psi_{-1})\notag\\
&= - \partial_t((s \eta)^{-2}) \psi_{-1} + (s \eta)^{-2} (- \partial_t \psi_{-1} - \Delta \psi_{-1} )\notag\\
&=  - \partial_t((s \eta)^{-2}) \psi_{-1} + (s \eta)^{-2} (- (\partial_t \Theta) \varphi  + \Theta (-\partial_t \varphi - \Delta \varphi) - 2 \nabla \Theta . \nabla \varphi - (\Delta \Theta) \varphi)\notag,
\end{align}
\begin{equation}
\label{hyprecpsi0} - \partial_t \psi_0 - \Delta \psi_0 = - \partial_t((s \eta)^{-2}) \psi_{-1} + (s \eta)^{-2} (- (\partial_t \Theta) \varphi  - 2 \nabla \Theta . \nabla \varphi - (\Delta \Theta) \varphi).
\end{equation}
Straightforward computations lead to 
\begin{equation}
\partial_t \Theta = e^{-s \rho \eta /2} \left(-\frac{1}{2} (s \eta')(s\eta)^{3/2} \rho + \frac{3}{2}(s \eta')(s \eta)^{1/2}\right),
\label{derivetTheta}
\end{equation}
\begin{equation}
\nabla \Theta = -\frac{1}{2} e^{-s \rho \eta /2} (s \eta)^{5/2} \nabla \rho,\quad\Delta \Theta = e^{-s \rho \eta /2}\left(\frac{(s \eta)^{7/2}}{4} |\nabla \rho|^2 - \frac{(s\eta)^{5/2}}{2} \Delta \rho\right).
\label{derivexTheta}
\end{equation}
By using \eqref{hyprecpsi0}, \eqref{derivetTheta}, \eqref{derivexTheta}, we get 
\begin{align}\label{hyprecpsi0suite}
&- \partial_t \psi_0 - \Delta \psi_0\\
&= - \partial_t((s \eta)^{-2}) \psi_{-1} \notag\\
&\quad+\left(e^{-s \rho \eta /2} \left(\frac{1}{2} (s \eta')(s\eta)^{-1/2} \rho - \frac{3}{2}(s \eta')(s \eta)^{-3/2} -\frac{(s \eta)^{3/2}}{4} |\nabla \rho|^2 + \frac{(s\eta)^{1/2}}{2} \Delta \rho \right)\right) \varphi\notag\\
&\quad + e^{-s \rho \eta /2} (s \eta)^{1/2} \nabla \rho . \nabla \varphi\notag.
\end{align}
Moreover, by using \eqref{defpsisuite} and \eqref{derivexTheta}, we have 
\begin{equation}
\label{gradpsi-1}
\psi_{-1} = e^{-s \rho \eta /2} (s \eta)^{3/2} \varphi \Leftrightarrow \varphi = e^{s \rho \eta /2} (s \eta)^{-3/2} \psi_{-1},
\end{equation}
\begin{align}
\label{gradpsi-1bis}(s \eta)^{-1} \nabla \psi_{-1} . \nabla \rho &= (s \eta)^{-1} \Big((\nabla \Theta. \nabla \rho) \varphi + (\nabla \varphi . \nabla \rho) \Theta\Big)\notag\\
&  = e^{-s \rho \eta /2} \left( - \frac{(s \eta)^{3/2}}{2} |\nabla \rho|^2 \varphi + (s\eta)^{1/2} \nabla \rho. \nabla \varphi \right).
\end{align}
We gather \eqref{hyprecpsi0suite}, \eqref{gradpsi-1} and \eqref{gradpsi-1bis} to get \eqref{systemepsin} and \eqref{deffn} for $n=0$ (remark that $\eta' \leq C(\eta^2+\eta^3)$) with
\begin{equation}
a_0:= -\partial_t ((s\eta)^{-2}) + \eta' \left( \frac{s^{-1}\eta^{-2}}{2} \rho  - \frac{3}{2} s^{-2} \eta^{-3}\right) + \frac{1}{4} |\nabla \rho |^2 + \frac{1}{2} (s \eta)^{-1}\Delta \rho \in L^{\infty}(Q_T),
\label{defa_0}
\end{equation}
\indent \textit{Heredity}: Let $n \geq 1$. We assume that \eqref{systemepsin} and \eqref{deffn} hold true for $n-1$. Then, by using \eqref{defpsisuite}, we have
\begin{align*}
- \partial_t \psi_n - \Delta \psi_n &=  -\partial_t ((s \eta)^{-2} \psi_{n-1}) - \Delta ((s \eta)^{-2} \psi_{n-1})\\
&= - \partial_t((s \eta)^{-2}) \psi_{n-1} + (s \eta)^{-2} (- \partial_t \psi_{n-1} - \Delta \psi_{n-1})\\
& = - \partial_t((s \eta)^{-2}) \psi_{n-1} + (s\eta)^{-2} (a_{n-1} \psi_{n-2} + (s \eta)^{-1} \nabla \psi_{n-2} . \nabla \rho )\\
&= - \partial_t((s \eta)^{-2}) \psi_{n-1} + a_{n-1} \psi_{n-1} + (s \eta)^{-1} \nabla \psi_{n-1} . \nabla \rho.
\end{align*}
Therefore, \eqref{systemepsin} and \eqref{deffn} hold true for $n$ with
\begin{equation}
a_n(t,x) := - \partial_t ((s \eta)^{-2}) + a_{n-1} (t,x) \in L^{\infty}(Q_T).
\label{defrecanan-1}
\end{equation}
This ends the proof of \Cref{lemtechniquerec}.
\end{proof}
\textbf{Step 2:} We show by induction that
\begin{align}
\forall n \in \{0,\dots, n_0\},\ \psi_n \in X_{T,p_{n-1}},\ \norme{\psi_n}_{X_{T,p_{n-1}}}^{p_{n-1}} &\leq C_n  \norme{e^{-s \rho \eta /2} (s \eta)^{3/2} \varphi}_{L^{p_{n-1}}((0,T)\times\omega_1)}^{p_{n-1}}\label{pnpn-1}.
\end{align}
\indent First, we treat the case $n=0$. By using \eqref{deffn} for $n=0$, \eqref{gradpsi-1} and \eqref{gradpsi-1bis}, we remark that 
\begin{align}
\label{ecritureg0}
g_0 &= a_0 \psi_{-1} + (s \eta)^{-1} \nabla \psi_{-1}.\nabla \rho\notag\\ &= a_0 e^{-s \rho \eta /2} (s \eta)^{3/2} \varphi + e^{-s \rho \eta /2} \left( - \frac{(s \eta)^{3/2}}{2} |\nabla \rho|^2 \varphi + (s\eta)^{1/2} \nabla \rho. \nabla \varphi \right).
\end{align}
Then, from \eqref{ecritureg0}, we get that $g_0 \in L^2(Q_T)$ and
\begin{equation}
\norme{g_0}_{L^2(Q_T)}^2 \leq C \int\int_{(0,T)\times\Omega} e^{-s \rho(x) \eta(t)}((s\eta)^3 |\varphi|^2 + (s \eta) |\nabla \varphi |^2)dxdt .
\label{estimateg0}
\end{equation}
Consequently, by \eqref{systemepsin} (for $n=0$), \eqref{estimateg0} and a parabolic regularity estimate (see \Cref{wplp} with $p=2$), we find that
\begin{equation}
\psi_0 \in X_{T,2}\ \text{and}\ \norme{\psi_0}_{X_{T,2}}^2 \leq C \int\int_{(0,T)\times\Omega} e^{-s \rho(x) \eta(t)}((s\eta)^3 |\varphi|^2 + (s \eta) |\nabla \varphi |^2)dxdt.
\label{estimatepsi0}
\end{equation}
Gathering the Carleman estimate in $L^2$ i.e. \eqref{carl2} and \eqref{estimatepsi0}, we have
\begin{align}
\norme{\psi_0}_{X_{T,2}}^2
 \leq C \int\int_{(0,T)\times\omega_1} e^{-s \rho(x) \eta(t)}(s\eta)^3|\varphi|^2 dxdt.
\label{psi0sobestimate4}
\end{align}
This concludes the proof of \eqref{pnpn-1} for $n=0$.\\

Now, we assume that \eqref{pnpn-1} holds true for an integer $n\in\{0,\dots, n_0-1\}$. By \eqref{defsequencep}, a Sobolev embedding (see \Cref{injsobo}) applied to the left hand side of \eqref{pnpn-1}, the embedding $L^{{p_n}}((0,T)\times\omega_1) \hookrightarrow L^{p_{n-1}}((0,T)\times\omega_1)$, applied to right hand side of \eqref{pnpn-1}, we obtain
$$\psi_{n} \in L^{p_{n}}(0,T;W^{1,p_{n}}(\Omega)),$$
\begin{equation}
\norme{\psi_{n}}_{L^{p_{n}}(0,T;W^{1,p_{n}}(\Omega))} \leq C_n\norme{e^{-s \rho \eta /2} (s \eta)^{3/2} \varphi}_{L^{p_{n}}((0,T)\times\omega_1)}.
\label{psin}
\end{equation}
By using the parabolic equation satisfied by $\psi_{n+1}$ i.e. \eqref{systemepsin}, \eqref{deffn} for $(n+1)$, \eqref{psin} and a parabolic regularity estimate (see \Cref{wplp} with $p=p_n$), we get \begin{align*}
\psi_{n+1} \in X_{T,p_{n}}\quad\text{and}\quad  \norme{\psi_{n+1}}_{X_{T,p_{n}}}^{p_{n}} &\leq C_{n+1}  \norme{e^{-s \rho \eta /2} (s \eta)^{3/2} \varphi}_{L^{p_{n}}((0,T)\times\omega_1)}^{p_{n}}.
\end{align*}
This ends the proof of \eqref{pnpn-1}.\\

\textbf{Step 3:} We apply \eqref{pnpn-1} with $n=n_0$ and we use a Sobolev embedding (see \Cref{injsobo}) and \eqref{defsequencep} to get 
$$\psi_{n_0} \in L^{p_{n_0}}(0,T;W^{1,p_{n_0}}(\Omega)),$$
\begin{equation}
\norme{\psi_{n_0}}_{L^{p_{n_0}}(0,T;W^{1,p_{n_0}}(\Omega))} \leq C\norme{e^{-s \rho \eta /2} (s \eta)^{3/2} \varphi}_{L^{p_{n_0-1}}((0,T)\times\omega_1)}.
\label{psin0}
\end{equation}
Recalling the definition of $n_0$ in \eqref{defn0},  and by using \eqref{psin0}, together with the embedding $$L^{p_{n_0}}(Q_T) \hookrightarrow L^{{2k+2}}(Q_T),$$ applied to the left hand side of \eqref{psin0} and the embedding $$L^{{2k+2}}((0,T)\times\omega_1) \hookrightarrow L^{p_{n_0-1}}((0,T)\times\omega_1),$$ applied to the right hand side of \eqref{psin0}, we get
\begin{equation}
\norme{\psi_{n_0}}_{L^{2k+2}(Q_T)}^{p_{n_0-1}} +  \norme{\nabla \psi_{n_0}}_{L^{2k+2}(Q_T)}^{p_{n_0-1}} \leq C  \norme{e^{-s \rho \eta /2} (s \eta)^{3/2} \varphi}_{L^{2k+2}((0,T)\times\omega_1)}^{p_{n_0-1}}.\label{pn0pn0embedd}
\end{equation}
Then, from the definition \eqref{defpsisuite} of $\psi_{n_0}$, we get
\begin{equation}\label{psin0varphi1}
\psi_{n_0} =  e^{-s \rho \eta /2}(s \eta)^{-1/2 - 2 n_0} \varphi,
\end{equation}
\begin{equation}
\nabla \psi_{n_0} =-\frac{1}{2} e^{-s \rho \eta /2} (s \eta)^{1/2 - 2 n_0} \varphi \nabla \rho + e^{-s \rho \eta /2}(s \eta)^{-1/2 - 2 n_0} \nabla \varphi.
\label{psin0varphi2}
\end{equation}
Consequently, we deduce from \eqref{psin0varphi1} and \eqref{psin0varphi2} that
\begin{align}\label{pn0pn0embedd2}
&\norme{e^{-s \rho \eta /2}(s \eta)^{-1/2 - 2 n_0} \varphi}_{L^{2k+2}(Q_T)}^{p_{n_0-1}} +  \norme{e^{-s \rho \eta /2}(s \eta)^{-3/2 - 2 n_0} \nabla \varphi}_{L^{2k+2}(Q_T)}^{p_{n_0-1}}\\
&\leq C \left(\norme{\psi_{n_0}}_{L^{2k+2}(Q_T)}^{p_{n_0-1}} +  \norme{(s \eta)^{-1} \nabla \psi_{n_0}}_{L^{2k+2}(Q_T)}^{p_{n_0-1}}\right)\notag\\
& \leq C  \left(\norme{\psi_{n_0}}_{L^{2k+2}(Q_T)}^{p_{n_0-1}} +  \norme{\nabla \psi_{n_0}}_{L^{2k+2}(Q_T)}^{p_{n_0-1}}\right).\notag
\end{align}
By using \eqref{pn0pn0embedd} and \eqref{pn0pn0embedd2}, we get \eqref{carl2k+2} with $m=4n_0+1$.
\end{proof}

We consider $\chi \in C^{\infty}(\overline{\Omega}; [0,+\infty))$ such that
\begin{equation}
supp(\chi) \subset \subset \omega, \qquad \chi = 1\ \text{in}\ \omega_1, \qquad \chi^{\frac{1}{2k+1}} \in C^{\infty}(\overline{\Omega}; [0,+\infty)).
\label{defchi}
\end{equation}
We deduce from \Cref{carl2+2cor} the following result.
\begin{cor}\label{carl4usefaiblecor}
There exist $C >0$ and $m \in (0,+\infty)$ such that\\ for every $\varphi_T \in L^{2k+2}(\Omega)$ and for every $s \geq C$, the solution $\varphi \in L^{2k+2}(Q_T)$ of \eqref{equationphiadj}
satisfies
\begin{align}\label{carl2k+2usefortpart}
& \int\int_{(0,T)\times\Omega} e^{-(k+1)s \rho(x) \eta(t)}((s\eta)^{-(k+1)m} |\varphi|^{2k+2} + (s \eta)^{-(k+1)(m+2)} |\nabla \varphi |^{2k+2})dxdt\\
& \leq C\int\int_{(0,T)\times\omega} \chi^{2k+2} e^{-(k+1)s \rho(x) \eta(t)}(s\eta)^{3(k+1)}|\varphi|^{2k+2} dxdt,\notag
\end{align}
and
\begin{equation}
\norme{\varphi(0,.)}_{L^{2k+2}(\Omega)}^{2k+2}
\leq C_s \int\int_{(0,T)\times\omega} \chi^{2k+2} e^{-(k+1)s \rho(x) \eta(t)}(s\eta)^{3(k+1)}|\varphi|^{2k+2} dxdt.
\label{carl2k+2usefaible}
\end{equation}
\end{cor}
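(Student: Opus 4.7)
The plan is to proceed in three short steps: reduce \eqref{carl2k+2} to \eqref{carl2k+2usefortpart}, extend from $C_0^{\infty}$ to $L^{2k+2}$ final data by density, and derive the bound on $\varphi(0,\cdot)$ via an energy monotonicity argument. For the first step, assuming first that $\varphi_T \in C_0^{\infty}(\Omega)$, I would deduce \eqref{carl2k+2usefortpart} directly from \Cref{carl2+2cor}: by \eqref{defchi} the cutoff $\chi$ is nonnegative on $\overline{\Omega}$, supported in $\omega$, and equals $1$ on $\omega_1$, so
$$\int\int_{(0,T)\times\omega_1} e^{-(k+1)s\rho\eta}(s\eta)^{3(k+1)}|\varphi|^{2k+2}\,dx\,dt \leq \int\int_{(0,T)\times\omega} \chi^{2k+2} e^{-(k+1)s\rho\eta}(s\eta)^{3(k+1)}|\varphi|^{2k+2}\,dx\,dt,$$
and combining with \eqref{carl2k+2} yields \eqref{carl2k+2usefortpart} for smooth compactly supported final data.

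Next, to extend to $\varphi_T \in L^{2k+2}(\Omega)$ by density, I would take $(\varphi_T^n) \subset C_0^{\infty}(\Omega)$ converging to $\varphi_T$ in $L^{2k+2}(\Omega)$ and let $\varphi^n$ denote the corresponding solution of \eqref{equationphiadj}. By \Cref{wpl2} applied with $p = 2k+2$ to the time-reversed equation, $\varphi^n \to \varphi$ in $L^{2k+2}(Q_T)$ and, up to extracting a subsequence, almost everywhere on $Q_T$. The weight appearing on the right-hand side of \eqref{carl2k+2usefortpart} being bounded on $Q_T$ (the exponential factor kills the polynomial blow-up of $(s\eta)^{3(k+1)}$ at the endpoints), dominated convergence handles this term. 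For the left-hand side, Fatou's lemma takes care of the $|\varphi|^{2k+2}$ contribution; for the gradient term, parabolic smoothing for the backward heat equation guarantees $\varphi \in C^{\infty}((0,T)\times\Omega)$ together with $\nabla \varphi^n \to \nabla \varphi$ locally in $L^{2k+2}((0,T)\times\Omega)$, which combined with the exponential vanishing of $e^{-(k+1)s\rho\eta}$ at $t \in \{0,T\}$ and one more application of Fatou yields \eqref{carl2k+2usefortpart} in the limit.

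Finally, to obtain \eqref{carl2k+2usefaible}, I would exploit the fact that $t \mapsto \norme{\varphi(t,\cdot)}_{L^{2k+2}(\Omega)}$ is non-increasing as $t$ decreases: setting $u(t,x) := \varphi(T-t,x)$, the function $u$ satisfies the forward heat equation with homogeneous Dirichlet data, and testing against $|u|^{2k}u$ followed by integration by parts gives $\frac{d}{dt}\norme{u(t,\cdot)}_{L^{2k+2}(\Omega)}^{2k+2} \leq 0$. Hence for every $t \in [T/4, T/2]$ one has $\norme{\varphi(0,\cdot)}_{L^{2k+2}(\Omega)}^{2k+2} \leq \norme{\varphi(t,\cdot)}_{L^{2k+2}(\Omega)}^{2k+2}$. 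Integrating in $t$ over $[T/4, T/2]$ and using that on this strip the weight $e^{-(k+1)s\rho(x)\eta(t)}(s\eta(t))^{-(k+1)m}$ is uniformly bounded below by some positive constant $c_{s,T}$ yields
$$\norme{\varphi(0,\cdot)}_{L^{2k+2}(\Omega)}^{2k+2} \leq C_{s,T} \int\int_{Q_T} e^{-(k+1)s\rho\eta}(s\eta)^{-(k+1)m}|\varphi|^{2k+2}\,dx\,dt,$$
which, combined with \eqref{carl2k+2usefortpart}, gives \eqref{carl2k+2usefaible}. The main technical subtlety lies in the density step for the gradient term: since $L^{2k+2}$ final data do not a priori yield global $W^{1,2k+2}$ regularity in space on $Q_T$, one must rely on interior parabolic smoothing together with the rapid vanishing of the Carleman weight at the endpoints $t \in \{0,T\}$ to justify the passage to the limit.
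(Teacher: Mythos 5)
Your proposal is correct and follows the same overall architecture as the paper: deduce \eqref{carl2k+2usefortpart} for $\varphi_T\in C_0^{\infty}(\Omega)$ from \Cref{carl2+2cor} via \eqref{defchi}, pass to general $\varphi_T\in L^{2k+2}(\Omega)$ by density, and obtain \eqref{carl2k+2usefaible} from the lower bound of the weight on a middle time strip together with the decay of $t\mapsto\norme{\varphi(t,\cdot)}_{L^{2k+2}(\Omega)}$. The one place where you genuinely diverge is the limit passage in the density step (the content of \Cref{appendixdensity}). The paper never discusses regularity of $\nabla\varphi$: it reads off from the uniform estimate that $e^{-s\rho\eta/2}(s\eta)^{-(m+2)/2}\nabla\varphi_n$ is bounded in the reflexive space $L^{2k+2}(Q_T)$, extracts a weak limit, identifies it with the corresponding weighted gradient of $\varphi$, and concludes by weak lower semicontinuity of the norm (and similarly for $\varphi_n(0,\cdot)$). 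You instead use interior parabolic smoothing of the backward equation to get $\nabla\varphi^n\to\nabla\varphi$ locally (hence a.e.) away from $t=T$ and then apply Fatou; this is also valid, since the smoothing estimate $\norme{\nabla e^{\sigma\Delta}f}_{L^{2k+2}(\Omega)}\leq C\sigma^{-1/2}\norme{f}_{L^{2k+2}(\Omega)}$ gives the required local convergence and the weight vanishes only on the null set $t\in\{0,T\}$. Your route is more elementary in that it avoids weak compactness and the identification of weak limits, at the cost of invoking smoothing; the paper's route is purely functional-analytic and needs no regularity of the adjoint solution beyond $L^{2k+2}$. A final cosmetic difference: you derive \eqref{carl2k+2usefaible} after the density step directly from the already-general \eqref{carl2k+2usefortpart}, whereas the paper proves both inequalities for smooth data and runs the density argument for both simultaneously; either order works, since the $L^{2k+2}$ contraction property of the Dirichlet heat semigroup holds for general $L^{2k+2}$ data.
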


\begin{rmk}
We insist on the fact that the constant $C$ of the observability inequality \eqref{carl2k+2usefaible} depends on the parameter $s$. It is not the case of \eqref{carl2k+2usefortpart}.
\end{rmk}

\begin{proof}
\textbf{Step 1:} We assume that $\varphi_T \in C_0^{\infty}(\Omega)$. We denote by $\varphi \in X_{T,2k+2}$, the solution of \eqref{equationphiadj}. Then, by \Cref{carl2+2cor} and \eqref{defchi}, for every $s \geq C$, \eqref{carl2k+2usefortpart} holds and in particular,
\begin{align}\label{carl4usefortpartproof}
&\int\int_{(0,T)\times\Omega} e^{-(k+1)s \rho \eta}(s\eta)^{-(k+1)m} |\varphi|^{2k+2} \\&
\leq C\int\int_{(0,T)\times\omega} \chi^{2k+2} e^{-(k+1)s \rho \eta}(s\eta)^{3(k+1)}|\varphi|^{2k+2}\notag
\end{align}
We fix $s$ sufficiently large such that \eqref{carl4usefortpartproof} holds.\\
\indent By using
\begin{equation}
\min_{[T/4,3T/4]\times\overline{\Omega}} e^{-(k+1)s \rho(x) \eta(t)}(s\eta)^{-(k+1)m} > 0,
\label{minoration}
\end{equation}
together with \eqref{carl4usefortpartproof}, we get 
\begin{align}
\int_{T/4}^{3T/4}\int_{\Omega} |\varphi|^{2k+2} dxdt
\leq C_s \int\int_{(0,T)\times\omega} \chi^{2k+2} e^{-(k+1)s \rho(x) \eta(t)}(s\eta)^{3(k+1)}|\varphi|^{2k+2} dxdt.
\label{carl4usefortpart2}
\end{align}
From the dissipation of the $L^{2k+2}$-norm for \eqref{equationphiadj} (obtained by multiplying the equation \eqref{equationphiadj} by $|\varphi|^{p-2} \varphi$ and integrating over $\Omega$): $\norme{\varphi(0,.)}_{L^{2k+2}(\Omega)} \leq \norme{\varphi(t,.)}_{L^{2k+2}(\Omega)}$ for $t \in (T/4,3T/4)$, by integrating in time, we get
\begin{equation}
\norme{\varphi(0,.)}_{L^{2k+2}(\Omega)}^{2k+2} \leq C \int_{T/4}^{3T/4}\int_{\Omega} |\varphi|^{2k+2} dxdt.
\label{dissip}
\end{equation}
Gathering \eqref{carl4usefortpart2} and \eqref{dissip}, we get \eqref{carl2k+2usefaible}.\\

\textbf{Step 2:} The general case comes from a density argument by using in particular \Cref{wpl2}: \eqref{estlinftyfaible} for $p=2k+2$. The complete proof is reported to \Cref{appendixdensity}. 
\end{proof}

\subsection{A new penalized duality method in $L^{(2k+2)/(2k+1)}$, the Reflexive Uniqueness Method}\label{RUM}
From now on, $\chi$ is a function which belongs to $C^{\infty}(\overline{\Omega};[0,+\infty))$ satisfying \eqref{defchi} and $m$, $s$ are fixed by \Cref{carl4usefaiblecor}.\\
\indent We introduce the notations
\begin{equation}
q := \frac{2k+2}{2k+1},
\label{defq}
\end{equation}
\begin{equation*}
L_{wght}^{q}((0,T)\times\omega) := \left\{ h \in L^q((0,T)\times\omega)\ ;\ e^{s \rho \eta/2} (s \eta)^{-3/2} h \in L^{q}((0,T)\times\omega)\right\}.
\end{equation*}
The goal of this section is to get a null-controllability result for the heat equation thanks to the observability inequalities of \Cref{carl4usefaiblecor}. 
\begin{prop}\label{phumL^qprop}
For every $\zeta_0 \in L^q(\Omega)$, there exists a control $h \in L_{wght}^{q}((0,T)\times\omega)$ such that the solution $\zeta$ of
\begin{equation}
\left\{
\begin{array}{l l}
\partial_t \zeta -  \Delta \zeta =   h \chi &\mathrm{in}\ (0,T)\times\Omega,\\
\zeta= 0&\mathrm{on}\ (0,T)\times\partial\Omega,\\
\zeta(0,.)=\zeta_0& \mathrm{in}\ \Omega,
\end{array}
\right.
\label{heatzeta}
\end{equation}
satisfies $\zeta(T,.)=0$ and
\begin{equation}
\norme{e^{s \rho(x) \eta(t)/2} (s \eta)^{-3/2}h}_{L^{q}((0,T)\times\omega)} \leq C \norme{\zeta_0}_{L^{q}(\Omega)}.
\label{estimatehprum}
\end{equation} 
\end{prop}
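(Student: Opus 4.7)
The plan is to carry out a variant of the Penalized Hilbert Uniqueness Method adapted to the reflexive Banach space setting of $L^{2k+2}(\Omega)$, with the nonlinear duality map $\varphi \mapsto |\varphi|^{2k}\varphi$ playing the role of the $L^2$ Riesz representation. Concretely, for each $\varepsilon > 0$ I introduce on $L^{2k+2}(\Omega)$ the continuous, strictly convex functional
\[
J_\varepsilon(\varphi_T) := \frac{1}{2k+2}\int\int_{Q_T}\chi^{2k+2}e^{-(k+1)s\rho\eta}(s\eta)^{3(k+1)}|\varphi|^{2k+2}\,dx\,dt + \varepsilon\norme{\varphi_T}_{L^{2k+2}(\Omega)} + \int_\Omega\zeta_0\,\varphi(0,\cdot)\,dx,
\]
where $\varphi$ denotes the solution of the backward adjoint equation \eqref{equationphiadj} with terminal data $\varphi_T$. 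The exponents and weights are designed precisely so that the dual pairing with an $L^q_{wght}$-control reproduces the Carleman-weighted integral above.

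Coercivity of $J_\varepsilon$ is the main analytic input, and it comes from the $L^{2k+2}$-observability inequality \eqref{carl2k+2usefaible}: bounding $\norme{\varphi(0)}_{L^{2k+2}}$ by a constant times the first term of $J_\varepsilon$ and applying Young's inequality with exponents $q$ and $2k+2$ absorbs the linear contribution $\int\zeta_0\varphi(0)$ into a small fraction of that first term. The extra penalization $\varepsilon\norme{\varphi_T}_{L^{2k+2}}$ is essential, because observability only sees the initial trace $\varphi(0)$, not $\varphi_T$ itself. Reflexivity of $L^{2k+2}(\Omega)$ then yields a minimizer $\varphi_T^\varepsilon$; writing the Euler--Lagrange equation and defining
\[
h_\varepsilon := \chi^{2k+1}e^{-(k+1)s\rho\eta}(s\eta)^{3(k+1)}|\varphi^\varepsilon|^{2k}\varphi^\varepsilon,
\]
a standard integration by parts between the state $\zeta^\varepsilon$ of \eqref{heatzeta} and the adjoint $\varphi^\varepsilon$ produces the approximate-controllability identity $\zeta^\varepsilon(T,\cdot) = -\varepsilon\alpha^\varepsilon$ in $L^q(\Omega)$ with $\norme{\alpha^\varepsilon}_{L^q(\Omega)} \leq 1$.

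Using the algebraic identity $q(2k+1) = 2k+2$, the first term of $J_\varepsilon$ evaluated at the minimizer is exactly $\frac{1}{2k+2}\norme{e^{s\rho\eta/2}(s\eta)^{-3/2}h_\varepsilon}_{L^q((0,T)\times\omega)}^q$, so the trivial bound $J_\varepsilon(\varphi_T^\varepsilon) \leq J_\varepsilon(0) = 0$, together with the same Young absorption as in the coercivity argument, delivers the uniform estimate \eqref{estimatehprum} on $h_\varepsilon$. By reflexivity of $L^q_{wght}((0,T)\times\omega)$ I then extract a weakly convergent subsequence $h_\varepsilon \rightharpoonup h$; by linearity of \eqref{heatzeta} one passes to the limit in the state equation, and the bound $\norme{\zeta^\varepsilon(T,\cdot)}_{L^q(\Omega)} \leq \varepsilon$ forces the limit to satisfy $\zeta(T,\cdot) = 0$.

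The main obstacle is the coercivity step, which in the non-Hilbertian setting cannot rely on an inner product but must go through the nonlinear pairing $\varphi_T \mapsto \norme{\varphi_T}_{L^{2k+2}}^{2k+2}$ and an exponent-matched Young inequality; a secondary delicate point is that $\norme{\cdot}_{L^{2k+2}}$ is not Fréchet-smooth at the origin, so the Euler--Lagrange relation involves a convex subdifferential. This can either be handled directly by the subgradient of the norm, or sidestepped by replacing the penalty by the smooth version $\frac{\varepsilon}{2k+2}\norme{\varphi_T}_{L^{2k+2}}^{2k+2}$ and rescaling afterwards.
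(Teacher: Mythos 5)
Your proposal is correct, but it runs the duality in the opposite direction from the paper. The paper sets up the \emph{primal} penalized problem: it minimizes, over controls $h$ in the reflexive weighted space $L^{q}_{wght}((0,T)\times\omega)$, the functional $J(h)=\frac{1}{q}\int\int e^{(q/2)s\rho\eta}(s\eta)^{-3q/2}|h|^{q}+\frac{1}{q\varepsilon}\norme{\zeta(T,.)}_{L^q(\Omega)}^{q}$, then uses the Euler--Lagrange equation together with a separate duality identity to discover that the optimal control satisfies $(h^{\varepsilon})^{1/(2k+1)}=e^{-(q/2)s\rho\eta}(s\eta)^{3q/2}\varphi^{\varepsilon}\chi$, where $\varphi^{\varepsilon}$ is the adjoint solution with terminal datum $-\frac{1}{\varepsilon}(\zeta^{\varepsilon}(T,.))^{1/(2k+1)}$; the observability inequality \eqref{carl2k+2usefaible} then enters \emph{a posteriori} to bound the cross term $\int_\Omega\varphi^{\varepsilon}(0,.)\zeta_0$. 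You instead minimize the \emph{dual} (Fursikov--Imanuvilov/Lions-type) functional over terminal data $\varphi_T\in L^{2k+2}(\Omega)$, where observability is used up front as the coercivity statement and the control is read off from the minimizer. The two are Fenchel-dual convex programs and — as your exponent bookkeeping $q(2k+1)=2k+2$ confirms — they produce the identical formula for the control as a $(2k+1)$-st power of a weighted adjoint state, which is what the subsequent bootstrap in the paper actually exploits. What your route buys is a cleaner conceptual link (observability $=$ coercivity) and no need for the intermediate identification \eqref{dual1}--\eqref{hfonctionphi}; what it costs is the non-smoothness of the penalty $\varepsilon\norme{\varphi_T}_{L^{2k+2}(\Omega)}$ at the origin, which you correctly flag and handle via the subdifferential (whose elements lie in the dual unit ball, giving $\norme{\zeta^{\varepsilon}(T,.)}_{L^q(\Omega)}\le\varepsilon$ even when $\varphi_T^{\varepsilon}=0$). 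The paper avoids this entirely because its penalty $\frac{1}{q\varepsilon}\norme{\zeta(T,.)}_{L^q(\Omega)}^{q}$ is $C^1$. Two minor points to make fully rigorous in your write-up: the weak lower semicontinuity of the first term of $J_\varepsilon$ uses that $\varphi_T\mapsto\varphi$ is linear and bounded from $L^{2k+2}(\Omega)$ into $L^{2k+2}(Q_T)$ (estimate \eqref{estlinftyfaible}) together with convexity of the integrand, and the continuity of $\varphi_T\mapsto\varphi(0,.)$ needed for the linear term is the $L^{2k+2}$-dissipation already used in \eqref{dissip}.
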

\begin{proof}
Let $\zeta_0 \in C_0^{\infty}(\Omega)$. The general case comes from a density argument.\\
\indent We first state two easy facts.
\begin{claim}\label{lemmaprimitive}
The antiderivative of the continuous mapping $x \in \R \mapsto x^{\frac{1}{2k+1}}$ (see \Cref{defracinepuissimp}) is the strictly convex function \[x \in \R \mapsto \frac{1}{q}\abs{x}^q := \left\{
\begin{array}{c l}
\frac{1}{q} \exp(q \log (\abs{x})) & \mathrm{if}\  x \neq 0,\\
0 & \mathrm{if}\  x =0.
\end{array}
\right.\]
\end{claim}
\begin{claim}
The real numbers $2k+2$, $q$ belong to $(1, +\infty)$ and are conjugate:
\[\frac{1}{2k+2} + \frac{1}{q} = 1.\]
\end{claim}
Let $\varepsilon \in (0,1)$. We consider the minimization problem
\begin{equation}
 \inf\limits_{h \in L_{wght}^{q}((0,T)\times\omega)} J(h),
 \label{minprob}
\end{equation}
where $J$ is defined as follows: for every $h \in L_{wght}^{q}((0,T)\times\omega)$,
\begin{equation}
J(h) := \frac{1}{q} \int\int_{(0,T)\times\omega} e^{(q/2) s \rho(x) \eta(t)} (s \eta)^{-3q/2} |h|^{q}dxdt + \frac{1}{q\varepsilon} \norme{\zeta(T,.)}_{L^{q}\Omega)}^{q},
 \label{defJ}
 \end{equation}
where $\zeta \in X_{T,q}$ is the solution of \eqref{heatzeta} (see \Cref{wplp}).\\
\indent The mapping $J$ is a coercive, strictly convex (see \Cref{lemmaprimitive}), $C^{1}$ function on the reflexive space $L_{wght}^{q}((0,T)\times\omega)$. Then, $J$ has a unique minimum $h^{\varepsilon}$. We denote by $\zeta^{\varepsilon} \in X_{T,q}$ the solution of \eqref{heatzeta} associated to the control $h^{\varepsilon}$. The Euler-Lagrange equation gives
\begin{align}\label{eulerlagrange}
\forall h \in L_{wght}^{q}((0,T)\times\omega),\
& \int\int_{(0,T)\times\omega}e^{(q/2) s \rho(x) \eta(t)} (s \eta)^{-3q/2} (h^{\varepsilon})^{1/(2k+1)} hdxdt\\  
&+ \frac{1}{\varepsilon} \int_{\Omega}(\zeta^{\varepsilon}(T,x))^{1/(2k+1)}\zeta(T,x)dx = 0\notag,
\end{align}
where $\zeta \in X_{T,q}$ is the solution of \eqref{heatzeta} (associated to the control $h$) with $\zeta_0 = 0$.\\
\indent We introduce $\varphi^{\varepsilon} \in L^{2k+2}(Q_T)$ the solution of the adjoint problem
\begin{equation}
\left\{
\begin{array}{l l}
-\partial_t \varphi^{\varepsilon} - \Delta\varphi^{\varepsilon} = 0 &\mathrm{in}\ (0,T)\times\Omega,\\
\varphi^{\varepsilon} = 0 &\mathrm{on}\ (0,T)\times\partial\Omega,\\
\varphi^{\varepsilon}(T,.)=-\frac{1}{\varepsilon}(\zeta^{\varepsilon}(T,.))^{1/(2k+1)}=:\varphi_T^{\varepsilon} &\mathrm{in}\ \Omega.
\end{array}
\right.
\label{equationphi}
\end{equation}
By a duality argument between $\zeta$, the solution of \eqref{heatzeta} with $\zeta_0 = 0$ and $\varphi^{\epsilon}$, we have
\begin{equation}
\forall h \in L_{wght}^{q}((0,T)\times\omega),\quad \int_{\Omega}\varphi^{\varepsilon}(T,x)\zeta(T,x) dx
 = \int\int_{(0,T)\times\omega}  \varphi^{\varepsilon} h \chi dxdt.
\label{dual1}
\end{equation}
Indeed, first, one can prove the result for $\varphi_T^{\varepsilon} \in C_0^{\infty}(\Omega)$ because in this case $\varphi^{\varepsilon} \in X_{T,2k+2}$ and $\zeta \in X_{T,q}$. This justifies the calculations for the duality argument. Then, the fact that $\overline{C_0^{\infty}(\Omega)}^{L^{2k+2}(\Omega)} =L^{2k+2}(\Omega)$ leads to \eqref{dual1}.\\

\indent From \eqref{equationphi} (definition of $\varphi_T^{\varepsilon}$) and \eqref{dual1}, we have 
\begin{equation}
\forall h \in L_{wght}^{q}((0,T)\times\omega),\ -\frac{1}{\varepsilon} \int_{\Omega}(\zeta^{\varepsilon}(T,x))^{1/(2k+1)}\zeta(T,x)dx = \int\int_{(0,T)\times\omega}  \varphi^{\varepsilon}h \chi dxdt.
\label{dual1bis}
\end{equation}
Then, by using \eqref{eulerlagrange} and \eqref{dual1bis}, we obtain
\begin{equation}
(h^{\varepsilon})^{1/(2k+1)} = e^{-(q/2) s \rho(x) \eta(t)} (s \eta)^{3q/2} \varphi^{\varepsilon} \chi.
\label{hfonctionphi}
\end{equation}
Moreover, from a duality argument between $\varphi^{\varepsilon}$ and $\zeta^{\varepsilon}$, together with \eqref{hfonctionphi}, we have
\begin{align}\label{dual2}
&\int_{\Omega}\varphi^{\varepsilon}(T,x)\zeta^{\varepsilon}(T,x) dx\\
& = - \frac{1}{\varepsilon} \norme{\zeta^{\varepsilon}(T,.)}_{L^{q}(\Omega)}^{q} \notag\\
&=\int\int_{(0,T)\times\omega}  \varphi^{\varepsilon}h^{\varepsilon}\chi dxdt + \int_{\Omega} \varphi^{\varepsilon}(0,x)\zeta_0(x)dx\notag\\
&= \int\int_{(0,T)\times\omega} e^{-(k+1) s \rho\eta} (s\eta)^{3(k+1)}|\varphi^{\varepsilon}|^{2k+2} \chi^{2k+2} + \int_{\Omega} \varphi^{\varepsilon}(0,.)\zeta_0(.).\notag
\end{align}
By Young's inequality, we have for every $\delta > 0$,
\begin{equation}
\left|\int_{\Omega} \varphi^{\varepsilon}(0,x)\zeta_0(x)dx\right| \leq \delta \norme{\varphi^{\varepsilon}(0,.)}_{L^{2k+2}(\Omega)}^{2k+2} + C_{\delta} \norme{\zeta(0,.)}_{L^{q}(\Omega)}^{q}.
\label{young}
\end{equation}
From \eqref{dual2}, \eqref{young}, the observability inequality \eqref{carl2k+2usefaible} (applied to $\varphi^{\varepsilon}$), and by taking $\delta$ sufficiently small, we get
\begin{equation}
\frac{1}{\varepsilon} \norme{\zeta^{\varepsilon}(T,.)}_{L^{q}(\Omega)}^{q} + \int\int_{(0,T)\times\omega} e^{-(k+1) s \rho\eta} (s\eta)^{3(k+1)}|\varphi^{\varepsilon}|^{2k+2} \chi^{2k+2} \leq C \norme{\zeta_0}_{L^{q}(\Omega)}^{q}.
\label{estcontrsol}
\end{equation}
Now, plugging \eqref{hfonctionphi} into \eqref{estcontrsol}, we obtain
\begin{equation}
\frac{1}{\varepsilon} \norme{\zeta^{\varepsilon}(T,.)}_{L^{q}(\Omega)}^{q} + \norme{e^{s \rho(x) \eta(t)/2} (s \eta)^{-3/2}h^{\varepsilon}}_{L^{q}((0,T)\times\omega)}^{q} \leq C \norme{\zeta_0}_{L^{q}(\Omega)}^{q}.
\label{estcontrsolbis}
\end{equation}
In particular, from \eqref{estcontrsolbis}, we have
\begin{equation}
\zeta^{\varepsilon}(T,.) \underset{\varepsilon \rightarrow 0}{\rightarrow} 0\  \mathrm{in}\ L^{q}(\Omega),
\label{convzeta0}
\end{equation}
and
\begin{equation}
\norme{h^{\varepsilon}}_{L^{q}(Q_T)} \leq C.
\label{unebornecontrole}
\end{equation}
We remark that 
\begin{equation}
\zeta^{\varepsilon} = \zeta_1^{\varepsilon} + \zeta_2^{\varepsilon},
\label{decompzeta}
\end{equation}
with $\zeta_1^{\varepsilon}$, $\zeta_2^{\varepsilon}$ satisfying
\begin{equation}
\left\{
\begin{array}{l l}
\partial_t \zeta_1^{\varepsilon} -  \Delta \zeta_1^{\varepsilon} =   0 &\mathrm{in}\ (0,T)\times\Omega,\\
\zeta_1^{\varepsilon}= 0&\mathrm{on}\ (0,T)\times\partial\Omega,\\
\zeta_1^{\varepsilon}=\zeta_0& \mathrm{in}\ \Omega,
\end{array}
\right.
\qquad
\left\{
\begin{array}{l l}
\partial_t \zeta_2^{\varepsilon} -  \Delta \zeta_2^{\varepsilon} =   h^{\varepsilon} \chi &\mathrm{in}\ (0,T)\times\Omega,\\
\zeta_2^{\varepsilon}= 0&\mathrm{on}\ (0,T)\times\partial\Omega,\\
\zeta_2^{\varepsilon}=0& \mathrm{in}\ \Omega.
\end{array}
\right.
\label{heatzeta-1,2}
\end{equation}
Then, by using \eqref{decompzeta}, \eqref{heatzeta-1,2}, \eqref{unebornecontrole} and \Cref{wplp} with $p=q$, we have 
\begin{equation}
\norme{\zeta^{\varepsilon}}_{X_{T,q}} \leq C.
\label{uneborneenzeta}
\end{equation}
So, from \eqref{uneborneenzeta}, up to a subsequence, we can suppose that  there exists $\zeta \in X_{T,q}$ such that
\begin{equation}
\zeta^{\varepsilon} \underset{\varepsilon \rightarrow 0}{\rightharpoonup} \zeta\ \text{in}\ X_{T,q},
\label{wkVeps}
\end{equation}
and from the embedding $X_{T,q} \hookrightarrow C([0,T];L^{q}(\Omega))$ (see \cite[Section 5.9.2, Theorem 2]{E}),
\begin{equation}
\zeta^{\varepsilon}(0,.) \underset{\varepsilon \rightarrow 0}{\rightharpoonup} \zeta(0,.)\ \text{in}\ L^{q}(\Omega),\ \zeta^{\varepsilon}(T,.) \underset{\varepsilon \rightarrow 0}{\rightharpoonup} \zeta(T,.)\ \text{in}\  L^{q}(\Omega).
\label{convcicf}
\end{equation}
Then, as we have $\zeta^{\varepsilon}(0,.)=\zeta_0$ and \eqref{convzeta0}, we deduce that \begin{equation}
\zeta(0,.) = \zeta_0\  \mathrm{and}\  \zeta(T,.) = 0.
\label{cicf}
\end{equation}
Moreover, from \eqref{estcontrsolbis}, up to a subsequence, we can suppose that there exists $h \in L_{wght}^{q}((0,T)\times\omega)$ such that
\begin{equation}
h^{\varepsilon} \underset{\varepsilon \rightarrow 0}\rightharpoonup h \ \text{in}\ L_{wght}^{q}((0,T)\times\omega),
\label{wk*eps}
\end{equation}
and
\begin{align}
\label{estlimiteeps}
\norme{e^{s \rho(x) \eta(t)/2} (s \eta)^{-3/2}h}_{L^{q}((0,T)\times\omega)}^{q} &\leq \underset{\varepsilon \rightarrow 0}\lim\inf\norme{e^{s \rho(x) \eta(t)/2} (s \eta)^{-3/2}h^{\varepsilon}}_{L^{q}((0,T)\times\omega)}^{q}\\
& \leq C \norme{\zeta_0}_{L^{q}(\Omega)}^{q}\notag.
\end{align}
Then, from \eqref{wkVeps}, \eqref{wk*eps}, and \eqref{convcicf}, we let $\varepsilon \rightarrow 0$ in the following equations
\begin{equation}
\left\{
\begin{array}{l l}
\partial_t \zeta^{\varepsilon} -  \Delta \zeta^{\varepsilon} = h^{\varepsilon} \chi &\mathrm{in}\ (0,T)\times\Omega,\\
\zeta^{\varepsilon}= 0&\mathrm{on}\ (0,T)\times\partial\Omega,\\
\zeta^{\varepsilon}(0,.)=\zeta_0& \mathrm{in}\ \Omega,
\end{array}
\right.
\label{heatzetaeps}
\end{equation}
and by using \eqref{cicf}, we deduce
\begin{equation}
\left\{
\begin{array}{l l}
\partial_t \zeta -  \Delta \zeta =   h \chi &\mathrm{in}\ (0,T)\times\Omega,\\
\zeta= 0&\mathrm{on}\ (0,T)\times\partial\Omega,\\
(\zeta(0,.),\zeta(T,.))=(\zeta_0,0)& \mathrm{in}\ \Omega.
\end{array}
\right.
\label{heatzetalim}
\end{equation}
Therefore, \eqref{heatzetalim} and \eqref{estlimiteeps} conclude the proof of \Cref{phumL^qprop}.
\end{proof}

\subsection{A Bootstrap argument}\label{bootstrap}
The goal of this section is to prove \Cref{lemcontreg}. We keep the same notations as in the proof of \Cref{phumL^qprop}. We want to improve the regularity of the sequence $((h^{\varepsilon})^{\frac{1}{2k+1}})_{\varepsilon > 0}$. The \textit{key point} is the equality \eqref{hfonctionphi}. We deduce that the regularity of $(h^{\varepsilon})^{\frac{1}{2k+1}}$ depends on the regularity of $e^{-(q/2) s \rho(x) \eta(t)} (s \eta)^{3q/2} \varphi^{\varepsilon}$. We use \textit{parabolic regularity estimates} (see \Cref{wplp}) and a \textit{bootstrap argument} (similar to the proof of \Cref{carl2+2cor}). The \textit{starting point} is \eqref{carl2k+2usefortpart}.\\

\textbf{Step 1:}  We introduce the sequence $(\psi_n^{\varepsilon})_{n \geq -1}$,
\begin{equation}
\psi_{-1}^{\varepsilon} := e^{-s \rho \eta /2} (s \eta)^{-m/2} \varphi,\quad \forall n \geq 0,\ \psi_{n} := (s \eta)^{-2} \psi_{n-1}^{\varepsilon} = e^{-s \rho \eta /2}(s \eta)^{-m/2 - 2 (n+1)} \varphi^{\varepsilon},
\label{defpsisuitebis}
\end{equation}
where $m$ is defined in \Cref{carl4usefaiblecor}. Then, we also define an increasing sequence $(p_n)_{n \geq -1}$ by
\begin{equation}
p_{-1} := 2k+2,\qquad \forall n \geq 0,\   p_{n} :=
\left\{
\begin{array}{c l}
\frac{(N+2)p_{n-1}}{N+2-p_{n-1}} & \mathrm{if}\  p_{n-1} < N+2,\\
2 p_{n-1} & \mathrm{if}\  p_{n-1} = N+2,\\
+\infty & \mathrm{if}\  p_{n-1} > N+2.\\
\end{array}
\right.
\label{defsequencepbis}
\end{equation}
We denote by $l$ the integer such that
\begin{equation}
l := \min \{n \in \N\ ;\ p_n = +\infty\}.
\label{pinfty}
\end{equation}
By using \eqref{defpsisuitebis} and \eqref{equationphi}, we show by induction that for every $n \in \N$,
\begin{equation}
\left\{
\begin{array}{l l}
- \partial_t \psi_{n}^{\varepsilon} -  \Delta \psi_{n}^{\varepsilon} = g_n^{\varepsilon} &\mathrm{in}\ (0,T)\times\Omega,\\
\psi_{n}^{\varepsilon}= 0&\mathrm{on}\ (0,T)\times\partial\Omega,\\
\psi_{n}^{\varepsilon}(T,.)=0& \mathrm{in}\ \Omega,
\end{array}
\right.
\label{systemepsinbis}
\end{equation} 
with 
\begin{equation}
g_n(t,x) = a_n(t,x) \psi_{n-1}^{\varepsilon} + (s \eta)^{-1} \nabla \psi_{n-1}^{\varepsilon} . \nabla \rho, \qquad \text{where}\ \norme{a_n}_{L^{\infty}(Q_T)} \leq C_n.
\label{deffnbis}
\end{equation}
Indeed, straightforward computations as in the proof of \Cref{lemtechniquerec} lead to 
\begin{equation}
a_0:= -\partial_t ((s\eta)^{-2}) + \eta' \left( \frac{s^{-1}\eta^{-2}}{2} \rho  + \frac{m}{2} s^{-2} \eta^{-3}\right) + \frac{1}{4} |\nabla \rho |^2 + \frac{1}{2} (s \eta)^{-1}\Delta \rho,
\label{defa_0bis}
\end{equation}
\begin{equation}
a_n:=  - \partial_t ((s \eta)^{-2}) + a_{n-1}.
\label{defrecanan-1bis}
\end{equation}

\textbf{Step 2:} From \eqref{carl2k+2usefortpart}, \eqref{estcontrsol}, \eqref{defpsisuitebis} and \eqref{defq}, we have 
\begin{equation}
\norme{\psi_{-1}}_{L^{2k+2}(Q_T)} + \norme{(s\eta)^{-1} \nabla \psi_{-1}}_{L^{2k+2}(Q_T)} \leq C \norme{\zeta_0}_{L^{q}(\Omega)}^{q/(2k+2)} = C\norme{\zeta_0}_{L^{q}(\Omega)}^{1/(2k+1)}.
\label{estipsi-1bis}
\end{equation}
Then, by using parabolic regularity estimate (see \Cref{wplp}), \eqref{systemepsinbis}, \eqref{deffnbis}, \eqref{estipsi-1bis} and an induction argument (as in the proof of \Cref{carl2+2cor}), we have that
\begin{align}
\forall n \in \{0,\dots,l\},\ \psi_n^{\varepsilon} \in X_{T,p_{n-1}}\quad\text{and}\quad  \norme{\psi_n^{\varepsilon}}_{X_{T,p_{n-1}}} &\leq C_n   \norme{\zeta_0}_{L^{q}(\Omega)}^{1/(2k+1)}\label{pnpn-1bis}.
\end{align}

\textbf{Step 3:} We apply \eqref{pnpn-1bis} with $n = l$ (see \eqref{pinfty}) and we use \Cref{injsobo} with $p=p_{l-1}$ to get
\begin{align}
\psi_l^{\varepsilon} \in L^{\infty}(0,T;W_0^{1,\infty}(\Omega))\qquad\text{and}\qquad  \norme{\psi_l^{\varepsilon}}_{L^{\infty}(0,T;W_0^{1,\infty}(\Omega))} &\leq C   \norme{\zeta_0}_{L^{q}(\Omega)}^{1/(2k+1)}\label{pnpn-1tri}.
\end{align}
From a parabolic regularity estimate (see \Cref{wplp}) applied to the heat equation satisfied by $\psi_{l+1}^{\varepsilon}$ and \eqref{pnpn-1tri}, we obtain
\begin{equation}
\psi_{l+1}^{\varepsilon} \in \cap_{p \in [2,+\infty)} X_{T,p}\quad\text{and}\quad \forall p\in [2,+\infty),\ \norme{\psi_{l+1}^{\varepsilon}}_{X_{T,p}} \leq C_p  \norme{\zeta_0}_{L^{q}(\Omega)}^{1/(2k+1)}\label{pnpn-1quad}.
\end{equation}
From \eqref{hfonctionphi}, \eqref{defpsisuitebis} (see in particular that $q > 1$), we have
\begin{equation}
\forall p\in [2,+\infty),\ \norme{(h^{\varepsilon})^{1/(2k+1)}}_{X_{T,p}}\leq C_p \norme{\psi_{l+1}^{\varepsilon}}_{X_{T,p}}.
\label{hxpnlinftyint}
\end{equation}
From \eqref{pnpn-1quad} and \eqref{hxpnlinftyint}, we have
\begin{equation}
(h^{\varepsilon})^{1/(2k+1)} \in \bigcap_{p \in [2,+\infty)} X_{T,p},\ \forall p\in [2,+\infty),\ \norme{(h^{\varepsilon})^{1/(2k+1)}}_{X_{T,p}}  \leq C_p \norme{\zeta_0}_{L^{q}(\Omega)}^{1/(2k+1)}.
\label{hxpnlinfty}
\end{equation}
Now, by \eqref{estcontrsolbis} and \eqref{hxpnlinfty}, we have 
\begin{equation}
\forall p\in [2,+\infty),\ \frac{1}{\varepsilon^{1/(2k+2)}} \norme{\zeta^{\varepsilon}(T,.)}_{L^{q}(\Omega)}^{1/(2k+1)} + \norme{(h^{\varepsilon})^{1/(2k+1)}}_{X_{T,p}} \leq C_p \norme{\zeta_0}_{L^{q}(\Omega)}^{1/(2k+1)}.
\label{hxpnlinftyavecfini}
\end{equation}

\textbf{Step 4:} From \eqref{hxpnlinftyavecfini} and same arguments given as previously (see \Cref{RUM}), together with a diagonal extraction process, up to a subsequence, we can assume that there exist $H \in \cap_{p \in [2,+\infty)}X_{T,p}$ and $\zeta \in X_{T,p}$ such that
\begin{align}
&(h^{\varepsilon})^{1/(2k+1)} \underset{\varepsilon \rightarrow 0}\rightharpoonup H \ \text{in}\ X_{T,p}\ \forall p \in [2,+\infty),
\label{convxpnh}
\\
&(h^{\varepsilon})^{1/(2k+1)} \underset{\varepsilon \rightarrow 0}\rightarrow H \ \text{in}\ L^{\infty}(Q_T),\qquad \Big(\Rightarrow h^{\varepsilon} \underset{\varepsilon \rightarrow 0}\rightarrow H^{2k+1} \ \text{in}\ L^{\infty}(Q_T)\Big),
\label{convlinftyh}
\\
&H(0,.) = 0,\ \quad   H(T,.) = 0,\qquad \big(\text{see}\ \eqref{hfonctionphi}\big),
\label{cicfbootcont}
\\
&\zeta^{\varepsilon} \underset{\varepsilon \rightarrow 0}{\rightharpoonup} \zeta\ \text{in}\ X_{T,q},
\label{wnVepsboot}
\\
&\zeta(0,.) = \zeta_0,\ \qquad   \zeta(T,.) = 0.
\label{cicfbootsol}
\end{align}
The strong $L^{\infty}$-convergence \eqref{convlinftyh} is a consequence of the weak $X_{T,p}$-convergence \eqref{convxpnh} for $p$ sufficiently large because $X_{T,p}$ is relatively compact in $L^{\infty}(Q_T)$ (see \cite[Section 8, Corollary 4]{S}: Aubin-Lions lemma).\\
\indent By using \eqref{convlinftyh}, \eqref{wnVepsboot} and \eqref{cicfbootsol} and by letting $\varepsilon \rightarrow 0$ in the following equations
\begin{equation}
\left\{
\begin{array}{l l}
\partial_t \zeta^{\varepsilon} -  \Delta \zeta^{\varepsilon} = h^{\varepsilon} \chi &\mathrm{in}\ (0,T)\times\Omega,\\
\zeta^{\varepsilon}= 0&\mathrm{on}\ (0,T)\times\partial\Omega,\\
\zeta^{\varepsilon}(0,.)=\zeta_0& \mathrm{in}\ \Omega,
\end{array}
\right.
\label{heatzetaepsboot}
\end{equation}
we deduce
\begin{equation}
\left\{
\begin{array}{l l}
\partial_t \zeta -  \Delta \zeta =   H^{2k+1} \chi &\mathrm{in}\ (0,T)\times\Omega,\\
\zeta= 0&\mathrm{on}\ (0,T)\times\partial\Omega,\\
(\zeta(0,.),\zeta(T,.))=(\zeta_0,0)& \mathrm{in}\ \Omega.
\end{array}
\right.
\label{heatzetalimboot}
\end{equation}

To sum up, for all $\zeta_0 \in C_0^{\infty}(\Omega)$, we have found a control 
\begin{equation}
h: = H^{2k+1} \chi \in L^{\infty}(Q_T),
\label{defhcontrolefinal}
\end{equation} 
such that 
\begin{align} \label{boundcontrlboot}
&h^{1/(2k+1)} = H \chi^{1/(2k+1)} \in  \bigcap_{[2,+\infty)} X_{T,p}, \qquad (\text{see}\ \eqref{convxpnh}),\\
\label{estihXTp}
&\forall p \in [2,+\infty),\  \norme{h^{1/(2k+1)}}_{X_{T,p}} \leq C_p \norme{\zeta_0}_{L^{q}(\Omega)}^{1/(2k+1)},\qquad (\text{see}\ \eqref{hxpnlinftyavecfini}),\\
\label{hnulen0enT}
&h(0,.) = h(T,.) = 0 , \qquad (\text{see}\ \eqref{cicfbootcont}).
\end{align}
Moreover, from \eqref{heatzetalimboot}, the solution $\zeta \in L^{\infty}(Q_T)$ of 
\begin{equation}
\left\{
\begin{array}{l l}
\partial_t \zeta -  \Delta \zeta =   h &\mathrm{in}\ (0,T)\times\Omega,\\
\zeta= 0&\mathrm{on}\ (0,T)\times\partial\Omega,\\
\zeta(0,.)=\zeta_0 & \mathrm{in}\ \Omega,
\end{array}
\right.
\label{heatzetalimbootbis}
\end{equation}
satisfies 
\begin{equation}
\label{zetanulTproofprop}
\zeta(T,.) = 0.
\end{equation}
By \eqref{defhcontrolefinal}, \eqref{boundcontrlboot}, \eqref{estihXTp}, \eqref{hnulen0enT}, \eqref{heatzetalimbootbis} and \eqref{zetanulTproofprop}, we deduce \Cref{lemcontreg}\\ for $\zeta_0 \in C_0^{\infty}(\Omega)$. The general case comes from $\overline{C_0^{\infty}(\Omega)}^{L^{q}(\Omega)} =L^{q}(\Omega)$ and the bound \eqref{boundcontrlboot}.
\begin{rmk}\label{rmk3hlinfty}
In this paragraph, we gives the main details to get \Cref{rmk2hlinfty} and consequently \Cref{rmkhLinfty}. By \eqref{OmegaC2a}, the function $\rho$, as in \Cref{carl2lem}, can be chosen such that \begin{equation}
\rho \in C^{2, \alpha}(\overline{\Omega}),
\label{lemregrho}
\end{equation} 
see the proof of \cite[Lemma 2.68]{C}.\\
\indent Let us take $\beta$ such that $ 1 + \alpha \leq \beta < 2$. From Sobolev embedding (see \cite[Corollary 1.4.1]{WYW}) and \eqref{pnpn-1quad}, we have for $p$ sufficiently large,
\begin{equation}
\psi_{l+1}^{\varepsilon} \in X_{T,p} \hookrightarrow C^{\beta/2, \beta}(\overline{Q_T})\ \text{and}\ \norme{\psi_{l+1}^{\varepsilon}}_{C^{\beta/2, \beta}(\overline{Q_T})} \leq C \norme{\zeta_0}_{L^{q}(\Omega)}^{1/(2k+1)}.
\label{injholder}
\end{equation}
From \eqref{lemregrho}, \eqref{defa_0bis} and \eqref{defrecanan-1bis}, we have $
a_{l+2} \in C^{\alpha/2, \alpha}(\overline{Q_T})$. Then, we deduce from \eqref{systemepsinbis}, \eqref{deffnbis} for $n=l+2$, \eqref{injholder}, \eqref{OmegaC2a} and a parabolic regularity theorem in Hölder spaces (see \cite[Theorem 8.3.7 and Theorem 7.2.24]{WYW}) that
$\psi_{l+2}^{\varepsilon} \in C^{1+\alpha/2, 2+\alpha}(\overline{Q_T})$. Therefore, we have
\begin{equation}
\frac{1}{\varepsilon^{1/(2k+2)}} \norme{\zeta^{\varepsilon}(T,.)}_{L^{q}(\Omega)}^{1/(2k+1)} + \norme{(h^{\varepsilon})^{1/(2k+1)}}_{C^{1+\alpha/2, 2+\alpha}(\overline{Q_T})} \leq C \norme{\zeta_0}_{L^{q}(\Omega)}^{1/(2k+1)}.
\label{hxpnlinftyavecfinihold}
\end{equation}
By \eqref{hxpnlinftyavecfinihold}, we conclude the proof of \Cref{rmk2hlinfty} as in the \textbf{Step 4} by using the compact embedding \\$C^{1+\alpha/2, 2+\alpha}(\overline{Q_T}) \hookrightarrow C^{1,2}(\overline{Q_T})$ by Ascoli's theorem.
\end{rmk}
\section{Local null-controllability of general nonlinear systems}
The proof of \Cref{mainresult2} relies on the return method and \Cref{couplzonecontrole}. Thus, we just need to construct an appropriate reference trajectory $((\overline{u}, \overline{v}), \overline{h})$. The goal of this section is to prove the existence of a nontrivial trajectory of \eqref{systemef1f2} associated to $f_1$ and $f_2$ defined in \Cref{mainresult2} (see in particular \eqref{hyp1} and \eqref{hyp2}). More precisely, we have the following result.
\begin{prop}\label{existencetraj}
Let $\omega_0$ be a nonempty open subset such that $\omega_0 \subset \subset \omega$. There exist $\varepsilon > 0$, $((\overline{u},\overline{v}),\overline{h}) \in (W_T \cap L^{\infty}(Q_T))^2 \times \Big(\bigcap_{[2,+\infty)}L^{p}(Q_T)\Big)$ such that
\begin{equation*}
\left\{
\begin{array}{l l}
\partial_t \overline{u}-  \Delta \overline{u} =  f_1(\overline{u},\overline{v}) + \overline{h} 1_{\omega} &\mathrm{in}\ (0,T)\times\Omega,\\
\partial_t \overline{v} -  \Delta \overline{v} = f_2(\overline{u},\overline{v})  &\mathrm{in}\ (0,T)\times\Omega,\\
\overline{u},\overline{v}= 0&\mathrm{on}\ (0,T)\times\partial\Omega,\\
(\overline{u},\overline{v})(0,.)=(0,0),\ (\overline{u},\overline{v})(T,.)=(0,0)& \mathrm{in}\ \Omega,
\end{array}
\right.
\end{equation*}
and 
\begin{equation}\label{T/83T/8}
\forall (t,x) \in (T/8,3T/8) \times \omega_0,\ \overline{u}(t,x) \geq \varepsilon.
\end{equation}
\end{prop}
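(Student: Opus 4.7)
The strategy is to build $(\overline{u},\overline{v})$ on the two halves $[0,T/2]$ and $[T/2,T]$ separately, glue at $t=T/2$, and then define $\overline{h}$ from the first equation of \eqref{systemef1f2}. Fix nested subsets $\omega_0\subset\subset\omega_1\subset\subset\omega$ and a parameter $\mu>0$ to be chosen small at the end. Write $\alpha:=g_1^{(2k+1)}(0)/(2k+1)!\neq 0$, so $g_1(u)=\alpha u^{2k+1}+O(u^{2k+2})$ near $0$. A direct manipulation of the Taylor series shows that the map $z\mapsto g_1^{-1}(z^{2k+1})$ (defined unambiguously for small $z$ via \Cref{defracinepuissimp}) is a \emph{smooth} function of $z$ near $0$, of the form $z/\alpha^{1/(2k+1)}+O(z^2)$; this is the device that turns the non-smooth map $y\mapsto g_1^{-1}(y)$ into a smooth operation whenever its argument is a $(2k+1)$-th power.

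\emph{Heating stage on $[0,T/2]$.} Choose $\theta\in C^\infty([0,T/2];[0,1])$ with $\theta(0)=\theta(T/2)=0$ and $\theta\equiv 1$ on $[T/8,3T/8]$, and $\psi\in C_c^\infty(\omega_1;[0,1])$ with $\psi\equiv 1$ on $\overline{\omega_0}$. Set $\overline{u}(t,x):=\mu\,\theta(t)\psi(x)$ on $[0,T/2]\times\Omega$, so that $\overline{u}\geq\mu$ on $(T/8,3T/8)\times\omega_0$ and $\overline{u}$ is spatially supported in $\overline{\omega_1}\subset\subset\omega$. Define $\overline{v}$ on $[0,T/2]$ as the unique solution of the second equation of \eqref{systemef1f2} with $\overline{v}(0,\cdot)=0$; by \Cref{wpl2} and a Gronwall estimate absorbing $g_2(\overline{v})$, using $\norme{g_1(\mu\theta\psi)}_{L^\infty}=O(\mu^{2k+1})$, one obtains $\norme{\overline{v}}_{L^\infty([0,T/2]\times\Omega)}=O(\mu^{2k+1})$. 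Set $v_1:=\overline{v}(T/2,\cdot)\in L^\infty(\Omega)$.

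\emph{Cooling stage on $[T/2,T]$.} Apply \Cref{lemcontreg}, after a time-translation, on the interval $(T/2,T)$ to the initial datum $v_1$: we obtain a control $h_\star$ with $\mathrm{supp}\,h_\star(t,\cdot)\subset\subset\omega$ for every $t$, $h_\star(T/2,\cdot)=h_\star(T,\cdot)=0$, $h_\star^{1/(2k+1)}\in\bigcap_{p\in[2,\infty)}X_{(T/2,T),p}$ with the sharp bound $\norme{h_\star^{1/(2k+1)}}_{X_{(T/2,T),p}}\leq C_p\norme{v_1}_{L^\infty}^{1/(2k+1)}=O(\mu)$, such that the solution $\overline{v}$ of $\partial_t\overline{v}-\Delta\overline{v}=h_\star$ with $\overline{v}(T/2,\cdot)=v_1$ satisfies $\overline{v}(T,\cdot)=0$. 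By \Cref{wpl2} one has $\norme{\overline{v}}_{L^\infty([T/2,T]\times\Omega)}=O(\mu^{2k+1})$, hence $\abs{g_2(\overline{v})}\geq\abs{g_2(0)}/2$ for $\mu$ small. Define
\[
\overline{u}(t,x):=g_1^{-1}\!\bigl(h_\star(t,x)/g_2(\overline{v}(t,x))\bigr)\quad\text{on}\ [T/2,T]\times\Omega.
\]
Writing $h_\star=(h_\star^{1/(2k+1)})^{2k+1}$ and using smoothness of $z\mapsto g_1^{-1}(z^{2k+1})$, $\overline{u}$ becomes a smooth function of $\bigl(h_\star^{1/(2k+1)},\overline{v}\bigr)$; hence $\overline{u}\in\bigcap_{p\in[2,\infty)}X_{(T/2,T),p}\subset L^\infty$, with $\overline{u}(T/2,\cdot)=\overline{u}(T,\cdot)=0$ (since $h_\star$ vanishes there and $g_1^{-1}(0)=0$) and spatial support in a compact subset of $\omega$. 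By construction $g_1(\overline{u})g_2(\overline{v})=h_\star=\partial_t\overline{v}-\Delta\overline{v}$, so the second equation of \eqref{systemef1f2} holds.

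\emph{Gluing and defining $\overline{h}$.} Since $\overline{u}(T/2,\cdot)=0$ from both sides and $\overline{v}(T/2,\cdot)=v_1$, concatenating produces $(\overline{u},\overline{v})\in(W_T\cap L^\infty(Q_T))^2$ satisfying the second equation on $Q_T$, vanishing at $t=0$ and $t=T$, and with $\overline{u}\geq\mu=:\varepsilon$ on $(T/8,3T/8)\times\omega_0$. Set $\overline{h}:=\partial_t\overline{u}-\Delta\overline{u}-f_1(\overline{u},\overline{v})$. Since $\overline{u}$ is spatially supported in a compact subset of $\omega$ and $f_1(0,v)=0$, one gets $\overline{h}\equiv 0$ outside $(0,T)\times\omega$; the $X_{T,p}$-regularity of $\overline{u}$ on both halves together with smoothness of $f_1$ gives $\overline{h}\in\bigcap_{p\in[2,\infty)}L^p(Q_T)$. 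The main obstacle is the cooling stage: it requires the full strength of \Cref{lemcontreg}---in particular the $X_{T,p}$-regularity of $h_\star^{1/(2k+1)}$ and its temporal vanishing---together with the key observation that $g_1^{-1}(z^{2k+1})$ is smooth in $z$, to recover an $\overline{u}$ regular enough to produce an $\overline{h}$ in $\bigcap_p L^p(Q_T)$. Smallness of $\mu$ keeps everything in the regime where the local inverse $g_1^{-1}$ is valid and $g_2$ stays bounded away from zero.
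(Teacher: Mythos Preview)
Your proof is correct and follows essentially the same two-stage construction as the paper (Propositions \ref{defu1v1h1RM} and \ref{defu2v2h2RM}): your observation that $z\mapsto g_1^{-1}(z^{2k+1})$ is smooth near $0$ is precisely the paper's device of writing $g_1=\widetilde{g_1}^{2k+1}$ with $\widetilde{g_1}$ a local $C^\infty$-diffeomorphism, so that your $\overline{u}=g_1^{-1}(h_\star/g_2(\overline{v}))$ coincides with the paper's $\overline{u_2}=\widetilde{g_1}^{-1}\bigl(H^{1/(2k+1)}g_2(\overline{v_2})^{-1/(2k+1)}\bigr)$. The one point you gloss over is the $X_{(T/2,T),p}$-regularity of $\overline{v}$ on the cooling half (needed so that the chain-rule computation actually places $\overline{u}$ in $\bigcap_p X_{(T/2,T),p}$); the paper secures this by viewing $\overline{v}$ on $(T/2,T)$ as the restriction of a heat solution on all of $(0,T)$ with zero initial datum and $L^\infty$ source, then invoking \Cref{wplp}.
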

We construct the reference trajectory $((\overline{u},\overline{v}),\overline{h})$ on $(0,T/2)$ to guarantee \eqref{T/83T/8} according to the following statement.
\begin{prop}\label{defu1v1h1RM}
There exists $\varepsilon_0 > 0$ such that for every $0 < \varepsilon < \varepsilon_0$, there exists $((\overline{u_1},\overline{v_1}),\overline{h_1}) \in L^{\infty}((0,T/2)\times\Omega)^2 \times L^{\infty}((0,T/2)\times\Omega)$ satisfying 
\begin{equation}
\left\{
\begin{array}{l l}
\partial_t \overline{u_1}-  \Delta \overline{u_1} =  f_1(\overline{u_1},\overline{v_1}) + \overline{h_1} 1_{\omega} &\mathrm{in}\ (0,T/2)\times\Omega,\\
\partial_t \overline{v_1} -  \Delta \overline{v_1} = f_2(\overline{u_1},\overline{v_1})  &\mathrm{in}\ (0,T/2)\times\Omega,\\
\overline{u_1},\overline{v_1}= 0&\mathrm{on}\ (0,T/2)\times\partial\Omega,\\
(\overline{u_1},\overline{v_1})(0,.)=(0,0)& \mathrm{in}\ \Omega,
\end{array}
\right.
\label{defu1uv1T/2}
\end{equation}
and
\begin{equation} 
\forall (t,x) \in (T/8,3T/8) \times \omega_0,\ \overline{u_1}(t,x) \geq \varepsilon,
\label{propoverlineu1}
\end{equation}
\begin{equation} 
\norme{\overline{u_1}}_{L^{\infty}((0,T/2)\times\Omega)} \leq 2 \varepsilon,
\label{propoverlineu1bis}
\end{equation}
\begin{equation}
\norme{\overline{v_1}(T/2,.)}_{L^{\infty}(\Omega)} \leq C \varepsilon^{2k+1}.
\label{estvt/2}
\end{equation}
\end{prop}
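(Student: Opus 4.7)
The strategy is to \emph{prescribe} $\overline{u_1}$ directly as a smooth spatially-compactly-supported ansatz, recover $\overline{v_1}$ as a small solution of the semilinear heat equation built from the second equation, and finally read off $\overline{h_1}$ from the first equation; the hypothesis \eqref{hyp1} will make $\overline{h_1}$ automatically vanish outside $\omega$, which is what ultimately matters since only $\overline{h_1} 1_\omega$ appears in the system.

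First, I would fix smooth cut-offs $\alpha \in C^\infty([0,T/2])$ with $\alpha(0)=0$, $0 \le \alpha \le \sqrt{2}$ and $\alpha \equiv \sqrt{2}$ on $[T/8, 3T/8]$, together with $\beta \in C_c^\infty(\omega)$ such that $0 \le \beta \le \sqrt{2}$ and $\beta \equiv \sqrt{2}$ on $\omega_0$. Setting
\[ \overline{u_1}(t,x) := \varepsilon\, \alpha(t)\, \beta(x), \]
yields a $C^\infty$ function, vanishing on $\partial \Omega$ and at $t=0$, with $\|\overline{u_1}\|_{L^\infty} \le 2\varepsilon$ and equal to $2\varepsilon \ge \varepsilon$ on $(T/8, 3T/8) \times \omega_0$, so \eqref{propoverlineu1} and \eqref{propoverlineu1bis} are built in.

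Next, I would construct $\overline{v_1}$ via a contraction argument. Since $g_1^{(j)}(0)=0$ for $0 \le j \le 2k$, Taylor's formula gives a constant $M_1$ with $|g_1(u)| \le M_1 |u|^{2k+1}$ for $|u| \le 2$, and $g_2$ is both bounded by some $M_2$ and Lipschitz on $[-1,1]$. For $R>0$ to be tuned, let $\Phi$ send $w$ in the closed ball $B_R \subset L^\infty((0,T/2)\times\Omega)$ to the unique solution $v$ of the \emph{linear} heat equation $\partial_t v -\Delta v = g_1(\overline{u_1})\, g_2(w)$ with zero Dirichlet data and $v(0,\cdot)=0$. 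The $L^\infty$-estimate \eqref{estlinftyfaible} of \Cref{wpl2} yields
\[ \|\Phi(w)\|_{L^\infty} \le C\, \|g_1(\overline{u_1})\, g_2(w)\|_{L^\infty} \le C\, M_1 M_2\, (2\varepsilon)^{2k+1}. \]
Choosing $R := 2^{2k+1} C M_1 M_2\, \varepsilon^{2k+1}$ and $\varepsilon_0$ small enough ensures both $R \le 1$ and $\Phi(B_R) \subset B_R$; the same estimate combined with the Lipschitz bound on $g_2$ turns $\Phi$ into a strict contraction on $B_R$ for $\varepsilon < \varepsilon_0$. The fixed point $\overline{v_1}$ then obeys $\|\overline{v_1}\|_{L^\infty} \le C \varepsilon^{2k+1}$, which yields \eqref{estvt/2}.

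Finally, I would simply set
\[ \overline{h_1} := \partial_t \overline{u_1} - \Delta \overline{u_1} - f_1(\overline{u_1}, \overline{v_1}) \in L^\infty((0,T/2)\times\Omega), \]
so that the first equation of \eqref{defu1uv1T/2} holds by construction. Outside $\omega$, both $\overline{u_1}$ and its derivatives vanish because $\beta$ is compactly supported in $\omega$, while \eqref{hyp1} gives $f_1(0, \overline{v_1}) = 0$; hence $\overline{h_1} \equiv 0$ outside $\omega$ and $\overline{h_1} = \overline{h_1}\, 1_\omega$, closing the system. The only non-routine step is the contraction argument for $\overline{v_1}$, but the high order of vanishing of $g_1$ at $0$ pins the source term of the $v$-equation at size $\varepsilon^{2k+1}$, so the natural fixed-point radius shrinks much faster than the Lipschitz constant of $\Phi$, making the iteration easy to close for $\varepsilon$ small.
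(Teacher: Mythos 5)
Your proof is correct and follows essentially the same route as the paper: prescribe a smooth $\overline{u_1}$ supported in $\omega$ satisfying \eqref{propoverlineu1}--\eqref{propoverlineu1bis}, solve for $\overline{v_1}$ by a Banach fixed point (where the order-$2k+1$ vanishing of $g_1$ gives the source size $\varepsilon^{2k+1}$ and hence \eqref{estvt/2}), and read off $\overline{h_1}$ from the first equation using \eqref{hyp1}. The only difference worth noting is that the paper takes $\operatorname{supp}(\overline{u_1})\subset\subset(0,T/2)\times\omega$, i.e. your $\alpha$ should also vanish near $t=T/2$; this is not needed for the proposition as stated, but it is needed afterwards to glue the trajectory continuously with the one of \Cref{defu2v2h2RM}, which starts from $\overline{u_2}(T/2,\cdot)=0$.
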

\begin{proof}
Let $\varepsilon > 0$, $\overline{u_1} \in C^{\infty}(\overline{(0,T/2)\times\Omega})$ such that $supp(\overline{u_1}) \subset \subset (0,T/2) \times \omega$, \eqref{propoverlineu1} and \eqref{propoverlineu1bis} holds. By a standard Banach fixed point argument, \eqref{hyp2} and \eqref{propoverlineu1bis}, for $\varepsilon > 0$ small enough, there exists a unique solution $\overline{v_1} \in L^{\infty}((0,T/2)\times \Omega)$ of
\begin{equation}
\left\{
\begin{array}{l l}
\partial_t \overline{v_1} -  \Delta \overline{v_1} = f_2(\overline{u_1},\overline{v_1}) &\mathrm{in}\ (0,T/2)\times\Omega,\\
\overline{v_1}= 0&\mathrm{on}\ (0,T/2)\times\partial\Omega,\\
\overline{v_1}(0,.)=0& \mathrm{in}\ \Omega,
\end{array}
\right.
\label{systemevT/2NL}
\end{equation}
in the sense of \Cref{defpropsolNL}.
From \eqref{propoverlineu1bis}, \eqref{hyp2}, \eqref{systemevT/2NL} and \Cref{wpl2} (see \eqref{estlinftyfaible}), we have \eqref{estvt/2}.
Finally, we define $\overline{h_1} \in L^{\infty}((0,T/2)\times \Omega)$ thanks to the property of $supp(\overline{u_1})$ and \eqref{hyp1} (note that $f_1(0,.)=0$),
\begin{equation} \overline{h_1} := \partial_t \overline{u_1}-  \Delta \overline{u_1} -  f_1(\overline{u_1},\overline{v_1}),\label{defoverlineh1}\end{equation} 
which is supported on $(0,T/2) \times \omega$.
This ends the proof of \Cref{defu1v1h1RM}.
\end{proof}
We construct the reference trajectory $((\overline{u},\overline{v}),\overline{h})$ of \Cref{existencetraj} on $(T/2,T)$ to guarantee $(\overline{u},\overline{v})(T,.)=0$ according to the following statement, which relies on \Cref{lemcontreg} and the local invertibility of $g_1^{1/(2k+1)}$.
\begin{prop}\label{defu2v2h2RM}
Let $\varepsilon_0$ be as in \Cref{defu1v1h1RM}.\\
\indent There exists $\varepsilon_0' \in (0,\varepsilon_0)$ such that, for every $\varepsilon \in (0,\varepsilon_0')$, there exists $((\overline{u_2},\overline{v_2}),\overline{h_2}) \in L^{\infty}((T/2,T)\times\Omega)^2 \times \cap_{[2,+\infty)} L^p((T/2,T)\times\Omega)$ satisfying 
\begin{equation}
\left\{
\begin{array}{l l}
\partial_t \overline{u_2}-  \Delta \overline{u_2} =  f_1(\overline{u_2},\overline{v_2}) + \overline{h_2} 1_{\omega} &\mathrm{in}\ (T/2,T)\times\Omega,\\
\partial_t \overline{v_2} -  \Delta \overline{v_2} = f_2(\overline{u_2},\overline{v_2})  &\mathrm{in}\ (T/2,T)\times\Omega,\\
\overline{u_2},\overline{v_2}= 0&\mathrm{on}\ (0,T/2)\times\partial\Omega,\\
(\overline{u_2},\overline{v_2})(T/2,.)=(0,\overline{v_1}(T/2,.)),\quad (\overline{u_2},\overline{v_2})(T,.)=(0,0)& \mathrm{in}\ \Omega,
\end{array}
\right.
\label{defu2v2T}
\end{equation}
where $((\overline{u_1},\overline{v_1}),\overline{h_1})$ is given by \Cref{defu1v1h1RM}.
\end{prop}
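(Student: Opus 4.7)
The plan is to apply \Cref{lemcontreg} on the interval $(T/2,T)$ with initial datum $\overline{v_1}(T/2,\cdot)$ in order to construct $\overline{v_2}$ as a controlled solution of the linear heat equation; then to recover $\overline{u_2}$ by locally inverting the nonlinear algebraic relation $f_2(\overline{u_2},\overline{v_2})=H$, where $H$ is the control produced by \Cref{lemcontreg}; and finally to define $\overline{h_2}$ from the first equation of \eqref{defu2v2T}.

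By \eqref{estvt/2}, $\norme{\overline{v_1}(T/2,\cdot)}_{L^{\infty}(\Omega)} \leq C\varepsilon^{2k+1}$. An application of \Cref{lemcontreg} on $(T/2,T)$ with this initial datum yields a control $H \in L^{\infty}((T/2,T)\times\Omega)$ and a solution $\overline{v_2}$ of
\[ \partial_t \overline{v_2} - \Delta \overline{v_2} = H, \quad \overline{v_2}(T/2,\cdot)=\overline{v_1}(T/2,\cdot), \quad \overline{v_2}(T,\cdot)=0,\]
with $H(T/2,\cdot)=H(T,\cdot)=0$, $\mathrm{supp}\,H(t,\cdot)\subset\subset\omega$, $H^{1/(2k+1)}\in\bigcap_{p\in[2,+\infty)}X_{(T/2,T),p}$, and $\norme{H^{1/(2k+1)}}_{X_{(T/2,T),p}} \leq C_p\varepsilon$. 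Parabolic estimates then give $\norme{\overline{v_2}}_{L^{\infty}} \leq C\varepsilon$.

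From the hypotheses on $g_1$, Taylor's theorem at $0$ provides
\[ g_1(u) = c\, u^{2k+1}\mu(u), \qquad c := \frac{g_1^{(2k+1)}(0)}{(2k+1)!}\neq 0,\]
with $\mu \in C^{\infty}$ and $\mu(0)=1$, so that $F(u):=u\,\mu(u)^{1/(2k+1)}$ is a $C^{\infty}$ local diffeomorphism at $0$ with $F(0)=0$, $F'(0)=1$. Since $g_2(0)\neq 0$, the map $v\mapsto (c\,g_2(v))^{1/(2k+1)}$ (defined via \Cref{defracinepuissimp}) is smooth and bounded away from zero near $v=0$. For $\varepsilon$ small enough, define
\[ \overline{u_2}(t,x):= F^{-1}\!\left(\frac{H^{1/(2k+1)}(t,x)}{(c\,g_2(\overline{v_2}(t,x)))^{1/(2k+1)}}\right);\]
this is well defined since its argument is uniformly small. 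A direct computation gives $g_1(\overline{u_2})\,g_2(\overline{v_2}) = c\,F(\overline{u_2})^{2k+1}\,g_2(\overline{v_2}) = H$, so the second equation of \eqref{defu2v2T} is satisfied. Endpoint vanishing of $\overline{u_2}$ at $t=T/2$ and $t=T$ and spatial support in $\omega$ are inherited from $H$, while the smoothness of $F^{-1}$ transports the $\bigcap_p X_{(T/2,T),p}$-regularity of $H^{1/(2k+1)}$ to $\overline{u_2}$.

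Finally, set $\overline{h_2} := \partial_t \overline{u_2} - \Delta \overline{u_2} - f_1(\overline{u_2},\overline{v_2})$. Since $\overline{u_2}$ is supported in $\omega$ and $f_1(0,v)=0$ by \eqref{hyp1}, $\overline{h_2}$ is supported in $\omega$; the regularity $\overline{h_2} \in \bigcap_{p\in[2,+\infty)} L^p((T/2,T)\times\Omega)$ follows from $\overline{u_2}\in\bigcap_p X_{(T/2,T),p}$ and the smoothness of $f_1$. The main technical obstacle is making the nonlinear algebraic inversion preserve parabolic regularity: one must secure uniform $L^{\infty}$-smallness of $H^{1/(2k+1)}/(c\,g_2(\overline{v_2}))^{1/(2k+1)}$ so that $F^{-1}$ is applied within its smoothness domain, and then invoke a chain-rule/Moser-type argument, using $X_{(T/2,T),p}\hookrightarrow C^{0}(\overline{Q_T})$ for $p$ large enough, to transfer regularity through the composition.
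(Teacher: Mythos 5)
Your proposal is correct and follows essentially the same route as the paper: apply \Cref{lemcontreg} on $(T/2,T)$ to produce $H$ and $\overline{v_2}$, write $g_1$ as a $(2k+1)$-th power of a local diffeomorphism via Taylor's formula (your $c^{1/(2k+1)}F$ is the paper's $\widetilde{g_1}$), invert to define $\overline{u_2}$, and set $\overline{h_2}:=\partial_t\overline{u_2}-\Delta\overline{u_2}-f_1(\overline{u_2},\overline{v_2})$. The only detail you gloss over is that $\overline{u_2}$ depends on $\overline{v_2}$ through $g_2(\overline{v_2})^{-1/(2k+1)}$, so you also need $\overline{v_2}\in\bigcap_{p\in[2,+\infty)}X_{(T/2,T),p}$ and not just an $L^{\infty}$ bound; the paper obtains this by viewing $\overline{v_2}$ as the restriction to $(T/2,T)$ of the solution on $(0,T)$ with zero initial datum and bounded right-hand side, so that maximal $L^p$ regularity (\Cref{wplp}) applies despite $\overline{v_1}(T/2,\cdot)$ being merely $L^{\infty}$.
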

\begin{proof}
We recall that $f_2(u,v) = g_1(u) g_2(v)$ (see \eqref{hyp2}).\\

\indent \textbf{Step 1:} We prove the existence of $a, \alpha, \beta > 0$ and a $C^{\infty}$-diffeomorphism $\widetilde{g_1} : (-a,a) \rightarrow(-\alpha,\beta)$ such that
\begin{equation}\label{g1g2prop}\forall x \in (-a,a),\ g_1(x):= \widetilde{g_1}(x)^{2k+1},\ g_2(x) \neq 0.\end{equation}
From \eqref{hyp2} and the Taylor formula, the map $$\widetilde{g_1}(x) := \left(\int_0^{1} \frac{(1-u)^{2k}}{(2k)!} g_1^{(2k+1)}(ux) du\right)^{1/(2k+1)} x,$$ satisfies $g_1(x) = \widetilde{g_1}(x)^{2k+1}$ for every $x \in \R$. Taking into account that $$\widetilde{g_1}'(0) = \frac{1}{(2k+1)!} g_1^{(2k+1)}(0) \neq 0\ \text{and}\ g_2(0) \neq 0\ (\text{see}\ \eqref{hyp2}),$$ there exists $a > 0$ such that $\widetilde{g_1} \in C^{\infty}((-a,a);\R)$, and $\widetilde{g_1}'(x) \neq 0$, $g_2(x) \neq 0$ for every $x \in (-a,a)$.\\
\indent We conclude from \textbf{Step 1} that $f_2(u,v) = \widetilde{g_1}^{2k+1}(u) g_2(v)$ locally around $0$.\\

\indent \textbf{Step 2:} Let  $0< \varepsilon < \varepsilon_0$ be a small parameter which will be fixed later. Let $((\overline{u_1},\overline{v_1}),\overline{h_1})$ be as in \Cref{defu1v1h1RM}.\\
\indent We apply \Cref{lemcontreg} with $(0,\tau)\leftarrow (T/2,T)$, $y_0 \leftarrow \overline{v_1}(T/2,.) \in L^{\infty}(\Omega)$. From \eqref{estvt/2}, there exists a control $H \in L^{\infty}((T/2,T)\times\Omega)$ such that 
\begin{align}
&\norme{H^{\frac{1}{2k+1}}}_{L^{\infty}((T/2,T)\times \Omega)} \leq C \norme{\overline{v_1}(T/2,.)}_{L^{\infty}(\Omega)}^{1/(2k+1)} \leq C \varepsilon, \label{estiHreg}\\
&H^{\frac{1}{2k+1}} \in  \cap_{p \in [2,+\infty)} X_{(T/2,T),p},\qquad (\text{see}\ \text{Definition}\ \ref{defiXp}),\label{Hregbis}\\
& H(T/2,.) = H(T,.) = 0\label{Hnulbis},\\
& \forall t \in [T/2,T],\ supp(H(t,.)) \subset\subset \omega\label{suppHbis},
\end{align} and the solution $\overline{v_2}$ of
\begin{equation}
\left\{
\begin{array}{l l}
\partial_t \overline{v_2} -  \Delta \overline{v_2} = H  &\mathrm{in}\ (T/2,T)\times\Omega,\\
\overline{v_2}= 0&\mathrm{on}\ (T/2,T)\times\partial\Omega,\\
\overline{v_2}(T/2,.)=\overline{v_1}(T/2,.)& \mathrm{in}\ \Omega,
\end{array}
\right.
\label{systemevTNL}
\end{equation}
satisfies 
\begin{equation}
\overline{v_2}(T,.) = 0.
\label{nuloverlinev2T}
\end{equation}
From \eqref{estiHreg}, \eqref{systemevTNL} and \Cref{wpl2}, we have
\begin{equation}
\norme{\overline{v_2}}_{L^{\infty}((T/2,T)\times\Omega)} \leq C \varepsilon^{2k+1}.
\label{estvT}
\end{equation}
Moreover, $\overline{v_2}$ is the restriction on $(T/2,T)$ of $\overline{v}$ defined by 
\begin{equation}
\left\{
\begin{array}{l l}
\partial_t \overline{v} -  \Delta \overline{v} = f_2(\overline{u_1}, \overline{v_1})1_{(0,T/2)} + H 1_{(T/2,T)}  &\mathrm{in}\ (0,T)\times\Omega,\\
\overline{v}= 0&\mathrm{on}\ (0,T)\times\partial\Omega,\\
\overline{v}(0,.)=0& \mathrm{in}\ \Omega.
\end{array}
\right.
\label{systemevTNLbis}
\end{equation}
Then, by using \Cref{defu1v1h1RM}: $(\overline{u_1}, \overline{v_1}) \in L^{\infty}(Q_T)^2$, \eqref{estiHreg}, \eqref{systemevTNLbis}, \Cref{defiXp} and \Cref{wplp}, we have
\begin{equation}
\overline{v_2} \in  \cap_{p \in [2,+\infty)} X_{(T/2,T),p}.
\label{regv2T}
\end{equation}
From \eqref{estvT}, for $\varepsilon$ sufficiently small, we have
\begin{equation}
\norme{\overline{v_2}}_{L^{\infty}((T/2,T)\times \Omega)} < a/2,
\label{estnormev2a}
\end{equation}
where $a$ is defined in \textbf{Step 1}. Therefore, from \eqref{estnormev2a} and \eqref{g1g2prop}, $g_2(\overline{v_2})^{-\frac{1}{2k+1}}$ is well-defined. Moreover, from \eqref{estiHreg}, for $\varepsilon$ sufficiently small, we have 
\begin{equation}
\label{estnormealphabeta}
\norme{H^{\frac{1}{2k+1}} g_2(\overline{v_2})^{-\frac{1}{2k+1}}}_{L^{\infty}((0,T/2)\times \Omega)} <  \max(\alpha/2,\beta/2),
\end{equation}
where $\alpha$ and $\beta$ are defined in \textbf{Step 1}.\\
\indent Then, we set \begin{align}\overline{u_2} &:= \widetilde{g_1}^{-1} \left(H^{\frac{1}{2k+1}} g_2(\overline{v_2})^{-\frac{1}{2k+1}}\right) \in L^{\infty}((T/2,T)\times \Omega),\label{defoverlineu2}\end{align} where $\widetilde{g_1}$ is defined as in \textbf{Step 1}.
From the fact that $g_2^{-\frac{1}{2k+1}} \in W^{2, \infty} (-a/2,a/2)$ (see \eqref{g1g2prop}), \eqref{estnormev2a} and \eqref{regv2T}, we check that
\begin{equation}
g_2(\overline{v_2})^{-\frac{1}{2k+1}} \in  \cap_{p \in [2,+\infty)} X_{(T/2,T),p}.
\label{g2v2reg}
\end{equation}
Taking into account that $\widetilde{g_1}^{-1} \in W^{2, \infty}(-\alpha/2, \beta/2)$, \eqref{Hregbis}, \eqref{g2v2reg} and \eqref{defoverlineu2}, we verify that 
\begin{equation}
\overline{u_2} \in  \cap_{p \in [2,+\infty)} X_{(T/2,T),p}.
\label{regu2}
\end{equation}
Finally, we define $\overline{h_2}$ thanks to \eqref{defoverlineu2} and \eqref{regu2}
\begin{equation}\overline{h_2}:= \partial_t \overline{u_2}-  \Delta \overline{u_2} -  f_1(\overline{u_2},\overline{v_2}) \in  \cap_{p \in [2,+\infty)}L^p((T/2,T)\times\Omega).,\label{defoverlineh2}\end{equation}
which is supported on $(T/2,T)\times \omega$ by \eqref{suppHbis} and \eqref{hyp1} (note that $f_1(0,.)=0$).
This ends the proof of \Cref{defu2v2h2RM}.
\end{proof}
\section{Some generalizations of the global null-controllability for “odd power systems”}\label{geneodd}

In this section, we generalize \Cref{mainresult1} to other parabolic systems. We omit the proofs because in each case, it is a slight adaptation of the strategy of \Cref{preuveglobale}.
\subsection{Linear parabolic operators} We present a natural generalization of the global null-controllability of \eqref{systemeodd} to more general \textit{linear} parabolic operators than $\partial_t - \Delta$.
\begin{prop}
Let $k \in \N^*$, $(d_1, d_2) \in (0,+\infty)^2$, $(b_1,b_2) \in (L^{\infty}(Q_T)^N)^2$, $(a_1,a_2) \in L^{\infty}(Q_T)^2$. Then, 
\begin{equation*}
\left\{
\begin{array}{l l}
\partial_t u -  d_1 \Delta u + b_1 . \nabla u + a_1 u  =   h 1_{\omega} &\mathrm{in}\ (0,T)\times\Omega,\\
\partial_t v -  d_2 \Delta v + b_2 . \nabla v + a_2 v = u^{2k+1}  &\mathrm{in}\ (0,T)\times\Omega,\\
u,v= 0&\mathrm{on}\ (0,T)\times\partial\Omega,\\
(u,v)(0,.)=(u_0,v_0)& \mathrm{in}\ \Omega,
\end{array}
\right.
\end{equation*}
is globally null-controllable.
\end{prop}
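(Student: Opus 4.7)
The plan is to follow the three-step strategy of Section \ref{preuveglobale} verbatim, after verifying that each ingredient extends to the linear parabolic operators $P_i := \partial_t - d_i \Delta + b_i \cdot \nabla + a_i$ ($i=1,2$) with bounded coefficients. The first step, steering $u$ to $0$ in time $T/2$, relies on $L^{\infty}$ null-controllability of $P_1 u = h 1_{\omega}$. The $L^{2}$ null-controllability of a parabolic equation with bounded lower-order coefficients is a classical consequence of the Fursikov--Imanuvilov Carleman estimate, and the upgrade to $L^{\infty}$ is obtained by any of the three methods recalled in Remark \ref{refpreuve}, each of which adapts without difficulty. The analogue of Proposition \ref{controlT/2} then yields $(u_1,v_1)$ with $u_1(T/2,\cdot)=0$ and $\norme{v_1(T/2,\cdot)}_{L^{\infty}(\Omega)}\leq C(\norme{u_0}_{L^{\infty}(\Omega)}^{2k+1}+\norme{v_0}_{L^{\infty}(\Omega)})$, since the $L^{\infty}$ estimate \eqref{estlinftyfaible} is itself insensitive to bounded lower-order terms.

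The heart of the matter is extending Proposition \ref{lemcontreg} to $P_2 y = h\chi$. Writing the adjoint as $P_2^{\ast}\varphi = -\partial_t \varphi - d_2 \Delta \varphi - \nabla\cdot(b_2 \varphi) + a_2 \varphi$, one starts from the standard $L^{2}$ Carleman estimate of Proposition \ref{carl2lem}: the first-order and zero-order terms produce contributions of lower weight in $s\eta$ than the dominant ones, so they are absorbed on the left as soon as $s\geq C$ for some $C$ depending on $\norme{b_2}_{L^\infty}$, $\norme{a_2}_{L^\infty}$ and $d_2$. The bootstrap of Theorem \ref{carl2+2cor} then runs identically: in the inductive lemma generalizing Lemma \ref{lemtechniquerec}, applying $P_2^{\ast}$ to $(s\eta)^{-2}\psi_{n-1}$ generates the same commutator structure $a_n \psi_{n-1}+(s\eta)^{-1}\nabla \psi_{n-1}\cdot \nabla \rho$, augmented by the perturbation $b_2\cdot\nabla\psi_{n-1}+a_2\psi_{n-1}$; these extra contributions still have the form $\widetilde{a}_n\psi_{n-1}+(s\eta)^{-1}\widetilde{b}_n\cdot\nabla\psi_{n-1}$ with bounded coefficients $\widetilde{a}_n,\widetilde{b}_n$, so they fit into the inductive hypothesis. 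The parabolic maximal-regularity estimate of Proposition \ref{wplp} and the Sobolev embedding chain of Lemma \ref{injsobo} hold verbatim for uniformly elliptic operators with bounded lower-order coefficients, so one recovers an $L^{2k+2}$ Carleman inequality of exactly the form \eqref{carl2k+2}.

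Once this $L^{2k+2}$ Carleman inequality is in hand, the Reflexive Uniqueness Method of Section \ref{RUM}, the bootstrap of Section \ref{bootstrap}, and the gluing argument of Proposition \ref{strategyofcontrol0-T} go through word for word, since none of them uses anything about $P_2$ beyond well-posedness, duality and parabolic regularity. The Euler--Lagrange equation now couples the penalized minimizer to the $P_2^{\ast}$ adjoint state via the same identity \eqref{hfonctionphi}, and the diagonal extraction in $\varepsilon$ produces a control $h$ with $h^{1/(2k+1)}\in\bigcap_p X_{T,p}$ obeying the bounds required by Proposition \ref{lemcontreg}.

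The main obstacle I anticipate is not conceptual but bookkeeping: carefully tracking how the extra terms $b_i\cdot\nabla$ and $a_i$ propagate through the iterative lemma producing the $L^{2k+2}$ Carleman inequality, and checking that taking $s$ large enough indeed absorbs them while the weights $e^{-(k+1)s\rho\eta}(s\eta)^{-(k+1)m}$ retain their structure. Everything else---the penalized minimization in $L^{(2k+2)/(2k+1)}$, the regularity bootstrap through Denk--Hieber--Prüss, and the passage to the limit in the Euler--Lagrange equation---is fully insensitive to the underlying second-order elliptic operator as long as it is uniformly elliptic with bounded coefficients.
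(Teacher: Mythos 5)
Your proposal follows exactly the route the paper itself indicates: the paper omits the proof of this proposition, saying only that it is a slight adaptation of \Cref{preuveglobale} based on the Carleman estimate of \cite[Lemma 2.1]{FCG} for parabolic operators with bounded lower-order terms, which is precisely the absorption argument you describe for the $L^2$ step. The two-interval decomposition, the bootstrapped $L^{2k+2}$ Carleman inequality, and the Reflexive Uniqueness Method are all used as in the paper, so in spirit your proof and the (omitted) official one coincide.

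There is, however, one load-bearing assertion that does not hold ``verbatim'' under the stated hypothesis $b_2 \in L^{\infty}(Q_T)^N$: that the maximal regularity of \Cref{wplp} applies to the adjoint $P_2^{\ast}\varphi = -\partial_t\varphi - d_2\Delta\varphi - \nabla\cdot(b_2\varphi) + a_2\varphi$. You write the adjoint in divergence form and then replace the perturbation by $b_2\cdot\nabla\psi_{n-1} + a_2\psi_{n-1}$, but this identification requires $\nabla\cdot b_2$ to exist. For merely bounded $b_2$, the term $\nabla\cdot(b_2\psi_n)$ can only be treated as a source in $L^{p}(0,T;W^{-1,p}(\Omega))$, so each application of parabolic regularity yields $\psi_n \in L^{p}(0,T;W^{1,p}(\Omega))$ and never $W^{2,p}$. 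This is harmless for observability (which is why the first step and the $L^2$ Carleman estimate survive with $b_1,b_2\in L^{\infty}$), but it blocks the final step: the whole point of the bootstrap of \Cref{bootstrap} is to obtain $(h^{\varepsilon})^{1/(2k+1)} \in X_{T,p}$, i.e. two spatial derivatives of the weighted adjoint state, so that $h_2 := P_1\bigl(H^{1/(2k+1)}\bigr)$ lies in $L^p((T/2,T)\times\Omega)$ rather than in a negative Sobolev space. With the regularity capped at one spatial derivative, $\Delta u_2$ is only a distribution and the resulting control is not in $L^2(Q_T)$, as the definition of null-controllability requires. The gap closes if one additionally assumes $b_2 \in L^{\infty}(0,T;W^{1,\infty}(\Omega)^N)$, so that the divergence can be expanded into bounded non-divergence-form terms and your induction runs as written; the coefficients $a_1$, $a_2$ and $b_1$ cause no such difficulty. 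Note that the paper's own one-line proof is silent on this point, so the issue concerns the statement as much as your argument, but your explicit claim that \Cref{wplp} transfers ``verbatim'' to the adjoint is where a referee would push back.
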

The proof is based on a Carleman estimate different from the one in \Cref{carl2lem} which can be found in \cite[Lemma 2.1]{FCG}.
\subsection{Global null-controllability result for particular superlinearities} We state a global null-controllability result linked with the global null-controllability of the \textit{semilinear heat equation}.
\begin{prop}
Let $k \in \N^*$, $f \in C^{\infty}(\R;\R)$ such that $$f(0) = 0\ \text{and}\ \frac{f(s)}{s \log^{3/2}(1 + |s|)} \rightarrow 0\ \text{when}\ |s| \rightarrow + \infty.$$ Then,
\begin{equation*}
\left\{
\begin{array}{l l}
\partial_t u -  \Delta u  =    f(u) + h 1_{\omega} &\mathrm{in}\ (0,T)\times\Omega,\\
\partial_t v -   \Delta v = u^{2k+1}  &\mathrm{in}\ (0,T)\times\Omega,\\
u,v= 0&\mathrm{on}\ (0,T)\times\partial\Omega,\\
(u,v)(0,.)=(u_0,v_0)& \mathrm{in}\ \Omega,
\end{array}
\right.
\end{equation*}
is globally null-controllable.
\end{prop}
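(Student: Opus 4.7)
The plan is to follow the two-step splitting used in Section \ref{preuveglobale}, replacing only the first step by its semilinear analogue. On $(0, T/2)$, the key ingredient is the classical global null-controllability in $L^{\infty}$ of the semilinear heat equation
\[ \partial_t u - \Delta u = f(u) + h 1_{\omega}, \]
under the Fernandez-Cara--Zuazua growth hypothesis $f(s)/(s \log^{3/2}(1+|s|)) \to 0$, which yields a control $h_1 \in L^{\infty}((0,T/2) \times \Omega)$ such that the associated solution $u_1$ lies in $L^{\infty}((0,T/2) \times \Omega)$ and satisfies $u_1(T/2, \cdot) = 0$. Defining then $v_1$ as the solution of $\partial_t v_1 - \Delta v_1 = u_1^{2k+1}$ with $v_1(0, \cdot) = v_0$, \Cref{wpl2} gives $v_1 \in L^{\infty}((0,T/2) \times \Omega)$; in particular $v_1(T/2, \cdot) \in L^{\infty}(\Omega)$. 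This is exactly the analogue of \Cref{controlT/2}, with the linear \Cref{nullheat} replaced by its semilinear counterpart.

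For the second step, on $(T/2, T)$, I would apply \Cref{lemcontreg} with initial datum $v_1(T/2, \cdot)$, exactly as in the proof of \Cref{controlT}. This produces $H \in L^{\infty}((T/2,T) \times \Omega)$ with $H^{1/(2k+1)} \in \bigcap_{p \in [2, +\infty)} X_{(T/2, T), p}$, $H(T/2, \cdot) = H(T, \cdot) = 0$, and $\mathrm{supp}(H(t, \cdot)) \subset\subset \omega$, such that the solution $v_2$ of $\partial_t v_2 - \Delta v_2 = H$ with $v_2(T/2, \cdot) = v_1(T/2, \cdot)$ satisfies $v_2(T, \cdot) = 0$. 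I would then set $u_2 := H^{1/(2k+1)}$ and
\[ h_2 := \partial_t u_2 - \Delta u_2 - f(u_2). \]
Since $u_2 \in L^{\infty}((T/2, T) \times \Omega)$ (by Sobolev embedding) and $\mathrm{supp}(u_2(t, \cdot)) \subset\subset \omega$ for every $t$, the additional term $f(u_2)$ is bounded and supported in $\omega$; hence $h_2 \in \bigcap_{p \in [2, +\infty)} L^p((T/2, T) \times \Omega)$ with $\mathrm{supp}(h_2(t, \cdot)) \subset \omega$, and $(u_2, v_2, h_2)$ solves the nonlinear system on $(T/2, T)$ between the states $(0, v_1(T/2, \cdot))$ and $(0, 0)$.

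Concatenating at $t = T/2$ (the matching conditions $u_1(T/2, \cdot) = 0 = u_2(T/2, \cdot)$ and $v_1(T/2, \cdot) = v_2(T/2, \cdot)$ hold by construction) produces a triple $((u, v), h) \in (W_T \cap L^{\infty}(Q_T))^2 \times \bigcap_{p \in [2, +\infty)} L^p(Q_T)$ that solves the system and drives $(u_0, v_0)$ to $(0, 0)$ in time $T$. The only place this proof departs from the one of \Cref{mainresult1} is step 1, which is precisely why the $\log^{3/2}$ growth condition appears in the hypothesis; step 2 is unchanged because the second equation is the same as in \eqref{systemeodd}, and the extra nonlinearity $f(u_2)$ is absorbed harmlessly into the definition of $h_2$ thanks to the compact support of $u_2$ in $\omega$. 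I expect the only real obstacle to be a careful statement and citation of the semilinear global $L^{\infty}$-null-controllability result, which is standard but requires the matching growth assumption; once it is in hand the rest of the argument is a line-by-line transposition of Section \ref{preuveglobale}.
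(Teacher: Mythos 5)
Your proposal is correct and is exactly the argument the paper has in mind: the paper omits the proof, remarking only that it is a "slight adaptation" of Section \ref{preuveglobale} based on the Fernandez-Cara--Zuazua semilinear null-controllability theorem \cite[Theorem 1.2]{FCZ}, which is precisely your step 1, while your step 2 (reusing \Cref{lemcontreg}, setting $u_2 := H^{1/(2k+1)}$ and absorbing $f(u_2)$ into $h_2$, which stays supported in $\omega$ because $f(0)=0$ and $\mathrm{supp}(u_2(t,\cdot)) \subset\subset \omega$) is the unchanged second step of \Cref{controlT}.
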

The proof is based on the global null-controllability of the semilinear heat equation with superlinearity as the function $f$. Particularly, we can see \cite[Theorem 1.2]{FCZ} proved by Enrique Fernandez-Cara and Enrique Zuazua or \cite[Theorem 1.7]{FCG}. 
\subsection{Global null-controllability for all “power systems”}
Let $n \in \N^*$. We have seen that \eqref{systemeu^n}, with $n$ an even integer, is not (globally) null-controllable by the maximum principle (see \Cref{resultatnegatifeven}) but \eqref{systemeu^n}, with $n$ an odd integer, is globally null-controllable (see \Cref{mainresult1}). In this section, we consider the following system:
\begin{equation}
\left\{
\begin{array}{l l}
\partial_t u -  \Delta u =   h 1_{\omega} &\mathrm{in}\ (0,T)\times\Omega,\\
\partial_t v -  \Delta v = |u|^{n-1}u  &\mathrm{in}\ (0,T)\times\Omega,\\
u,v= 0&\mathrm{on}\ (0,T)\times\partial\Omega,\\
(u,v)(0,.)=(u_0,v_0)& \mathrm{in}\ \Omega.
\end{array}
\right.
\tag{PowerO}
\label{systemeu^nO}
\end{equation}
\begin{defi}\label{defracinepuissimpgene}
The mapping $\Phi_{n}: x \in \R \mapsto |x|^{n-1}x \in \R$ is one-to-one. We note its inverse function $\Phi_{n}^{-1}$.
\end{defi}
\begin{rmk}
For $n$ an even integer, $\Phi_{n}(u) = u^{n}$ if $u \geq 0$ and $\Phi_{n}(u) = -u^{n}$ if $u < 0$. Whereas for $n$ an odd power, $\Phi_{n}(u)  = u^{n}$ for every $u \in \R$. 
\end{rmk}
We have a generalization of \Cref{mainresult1}.
\begin{theo}\label{mainresult1bis}
The system \eqref{systemeu^nO} is globally null-controllable (in the sense of \Cref{deflocalglobalcontr}).\\
\indent More precisely, there exists $(C_p)_{p \in [2,+\infty)} \in (0,\infty)^{[2,+\infty)}$ such that for every initial data $(u_0,v_0) \in L^{\infty}(\Omega)^2$, there exists a control $h \in \bigcap\limits_{p \in [2, +\infty)}L^p(Q_T)$ satisfying
\begin{equation}\label{estimcontrolmr1bis}
\forall p \in [2, +\infty),\ \norme{h}_{L^p(Q_T)} \leq C_p \left(\norme{u_0}_{L^{\infty}(\Omega)} + \norme{v_0}_{L^{\infty}(\Omega)}^{1/n}\right),
\end{equation}
and the solution $(u,v)$ of \eqref{systemeu^nO} verifies
$$(u,v)(T,.)=(0,0).$$
\end{theo}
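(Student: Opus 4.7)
The plan is to run the two-step scheme of \Cref{preuveglobale} with the odd-power map $x \mapsto x^{2k+1}$ replaced throughout by $\Phi_n(x) = \abs{x}^{n-1}x$, exploiting that $\Phi_n \colon \R \to \R$ is a continuous increasing bijection with continuous inverse $\Phi_n^{-1}(y) = \mathrm{sign}(y)\abs{y}^{1/n}$ (which coincides with $y \mapsto y^{1/(2k+1)}$ when $n = 2k+1$). On $(0,T/2)$, applying \Cref{nullheat} to $u_0$ produces a control $h_1 \in L^\infty$ driving $u_1$ to $0$ with $\norme{u_1}_{L^{\infty}} \leq C\norme{u_0}_{L^{\infty}}$, and since $\abs{u_1}^{n-1}u_1$ still has $L^\infty$-norm bounded by $\norme{u_1}_{L^{\infty}}^n$, the resulting $v_1$ satisfies $\norme{v_1(T/2,\cdot)}_{L^{\infty}} \leq C(\norme{u_0}_{L^{\infty}}^n + \norme{v_0}_{L^{\infty}})$, exactly as in \Cref{controlT/2}.

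The core step is an analogue of \Cref{lemcontreg}: for every $\tau > 0$ and $y_0 \in L^\infty(\Omega)$, find a control $h_\tau \in L^\infty(Q_\tau)$ supported in $\omega$, vanishing at the endpoints in time, with $\Phi_n^{-1}(h_\tau) \in \bigcap_{p \in [2,+\infty)} X_{\tau,p}$ and norm bounded by $C_{\tau,p}\norme{y_0}_{L^{\infty}}^{1/n}$, such that the heat equation with source $h_\tau$ drives $y_0$ to $0$. Granted this, apply it on $(T/2,T)$ with $y_0 = v_1(T/2,\cdot)$ to obtain $h$, and set $u_2 := \Phi_n^{-1}(h)$; then $\abs{u_2}^{n-1}u_2 = \Phi_n(u_2) = h$ is exactly the forcing needed in the second equation, $u_2$ inherits the support of $h$ (and hence lies in $\omega$) because $\Phi_n^{-1}(0) = 0$, and $h_2 := \partial_t u_2 - \Delta u_2$ is the desired control for the first equation.

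To prove this analogue I mimic \Cref{existencecontrolodd} with the exponents $(2k+2,\, (2k+2)/(2k+1))$ replaced by $(n+1,\, q)$ where $q := (n+1)/n$. The Carleman inequality in $L^{n+1}$ corresponding to \Cref{carl2+2cor} is obtained by the same bootstrap from the $L^2$-Carleman \eqref{carl2}, since the iteration in \Cref{lemtechniquerec} and the Sobolev embeddings of \Cref{injsobo} are insensitive to parity. The penalized functional
\begin{equation*}
J(h) := \frac{1}{q}\int\int_{(0,T)\times\omega} e^{(q/2)s\rho\eta}(s\eta)^{-3q/2}\abs{h}^q\, dx\, dt + \frac{1}{q\varepsilon}\norme{\zeta(T,\cdot)}_{L^q(\Omega)}^q
\end{equation*}
remains coercive, strictly convex and $C^1$ on the weighted reflexive space $L_{\mathrm{wght}}^q$, and because $\tfrac{d}{dx}\bigl(\abs{x}^q/q\bigr) = \abs{x}^{q-1}\mathrm{sign}(x) = \Phi_n^{-1}(x)$, its Euler-Lagrange equation takes the form
\begin{equation*}
\Phi_n^{-1}(h^\varepsilon) = e^{-(q/2)s\rho\eta}(s\eta)^{3q/2}\varphi^\varepsilon\chi.
\end{equation*}
The bootstrap of \Cref{bootstrap} then upgrades the regularity of the right-hand side, hence of $\Phi_n^{-1}(h^\varepsilon)$, to $\bigcap_{p \in [2,+\infty)} X_{\tau,p}$ with the expected bounds, \emph{without ever composing} $\Phi_n^{-1}$ with a function.

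The main obstacle I anticipate is precisely the lack of smoothness of $\Phi_n^{-1}$ at the origin when $n$ is even (it is only Hölder of exponent $1/n$), which forbids any naive post-composition strategy; the method circumvents this by producing $\Phi_n^{-1}(h^\varepsilon)$ \emph{directly} as an algebraic expression in $\varphi^\varepsilon$ and the Carleman weights. To conclude, one passes to the limit $\varepsilon \to 0$: from the uniform $X_{\tau,p}$-bounds and Aubin-Lions one gets $\Phi_n^{-1}(h^\varepsilon) \to G$ strongly in $L^\infty(Q_\tau)$; since $\Phi_n$ is $C^1$ on $\R$ and locally Lipschitz, this yields $h^\varepsilon = \Phi_n\bigl(\Phi_n^{-1}(h^\varepsilon)\bigr) \to \Phi_n(G)$ strongly in $L^\infty$ as well, enough to pass to the limit in $\partial_t\zeta^\varepsilon - \Delta\zeta^\varepsilon = h^\varepsilon\chi$ and recover the desired control $h := \Phi_n(G)\chi$ with all the claimed regularity, support, and endpoint-vanishing properties. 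Assembling the pieces on $(0,T/2)$ and $(T/2,T)$ as in \Cref{strategyofcontrol0-T} finishes the proof, and the estimate \eqref{estimcontrolmr1bis} follows from the elementary inequality $(a^n+b)^{1/n} \leq a + b^{1/n}$ for $a,b \geq 0$ applied to \eqref{estiv1T/2} with $2k+1$ replaced by $n$.
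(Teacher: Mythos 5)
Your proposal is correct and is essentially the paper's own argument: the paper likewise reduces \Cref{mainresult1bis} to the key lemma \Cref{lemcontreggene}, proved by an $L^{n+1}$-Carleman estimate and a penalized duality method in $L^{(n+1)/n}$ whose Euler--Lagrange equation produces $\Phi_n^{-1}(h^{\varepsilon})$ directly as a weighted multiple of $\varphi^{\varepsilon}$, exactly as you describe. Your observations on the antiderivative $\tfrac{1}{q}\abs{x}^q$ of $\Phi_n^{-1}$, on avoiding any post-composition with the merely H\"older map $\Phi_n^{-1}$, and on the elementary inequality yielding \eqref{estimcontrolmr1bis} all match the paper's reasoning.
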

The strategy of proof of \Cref{mainresult1bis} is the same as for \Cref{mainresult1}. It is based on the following \textit{key} result.
\begin{prop}\label{lemcontreggene}
For every $\tau > 0$, there exists $C_{\tau} >0$ such that \\for every $y_0 \in L^{\infty}(\Omega)$, there exists a control $h_{\tau} \in L^{\infty}(Q_{\tau})$ which verifies 
\begin{align*}
& \norme{\Phi_{n}^{-1}(h_{\tau})}_{L^{\infty}(Q_{\tau})} \leq C_{\tau} \norme{y_0}_{L^{\infty}(\Omega)}^{1/n},\\
&\Phi_{n}^{-1}(h_{\tau}) \in \bigcap\limits_{p \in [2, +\infty)} X_{\tau,p},\\
& \forall p \in [2,+\infty),\ \exists C_{\tau,p} >0,\  \norme{\Phi_{n}^{-1}(h_{\tau})}_{X_{\tau,p}} \leq C_{\tau,p} \norme{y_0}_{L^{\infty}(\Omega)}^{1/n},\\ 
&h_{\tau}(0,.)=h_{\tau}(\tau,.)=0,\\
&\forall t \in [0,\tau],\ supp(h_{\tau}(t,.)) \subset\subset \omega,
\end{align*} 
such that the solution $y \in  L^{\infty}(Q_{\tau})$ of \eqref{heat} satisfies $y(\tau,.)=0$.
\end{prop}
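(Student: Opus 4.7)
The plan is to mimic, almost verbatim, the proof of \Cref{lemcontreg}, replacing the odd exponent $2k+1$ by the arbitrary positive integer $n$ and the corresponding even exponent $2k+2$ by $n+1$. The key algebraic observation that makes this work is that the map $\Phi_n(x)=|x|^{n-1}x$ satisfies $\Phi_n^{-1}(x)=|x|^{1/n}\operatorname{sgn}(x)$, and the antiderivative of $\Phi_n^{-1}$ is the strictly convex function $x\mapsto \frac{1}{q}|x|^q$ with $q:=\frac{n+1}{n}\in(1,+\infty)$; in particular $2k+2$ is replaced by $n+1$ and its conjugate exponent $q$ is still well-defined. This is the exact substitute for \Cref{lemmaprimitive}, which was the only place where the odd-power structure of $x\mapsto x^{2k+1}$ was actually used.

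First I would establish a Carleman estimate in $L^{n+1}$ for solutions of the adjoint problem \eqref{equationphiadj}, by copying the proof of \Cref{carl2+2cor} verbatim: one defines the same weighted sequence $\psi_n$ (now with $2k+2$ replaced by $n+1$ in the target exponent), iterates Sobolev embeddings and parabolic maximal regularity starting from the usual $L^2$ Carleman inequality \eqref{carl2}, and stops at the first index whose Sobolev exponent exceeds $n+1$. This produces an analogue of \eqref{carl2k+2usefortpart}--\eqref{carl2k+2usefaible} with $(2k+2)$ systematically replaced by $(n+1)$ and a (possibly different) constant $m$.

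Next I would run the Reflexive Uniqueness Method of \Cref{RUM} in the reflexive space $L^{q}_{\text{wght}}$ with $q=\tfrac{n+1}{n}$. The penalized functional becomes
\[
J(h) := \frac{1}{q}\iint_{(0,\tau)\times\omega} e^{(q/2)s\rho\eta}(s\eta)^{-3q/2}|h|^{q}\,dx\,dt + \frac{1}{q\varepsilon}\|\zeta(\tau,\cdot)\|_{L^{q}(\Omega)}^{q},
\]
which is still strictly convex, coercive and $C^1$. Its Euler-Lagrange equation, combined with duality against the adjoint solution $\varphi^\varepsilon$ and the observability inequality (the $L^{n+1}$ analogue of \eqref{carl2k+2usefaible}), yields the identity
\[
\Phi_n^{-1}(h^\varepsilon) = e^{-(q/2)s\rho\eta}(s\eta)^{3q/2}\varphi^\varepsilon\chi,
\]
together with a uniform bound of $h^\varepsilon$ in $L^{q}_{\text{wght}}$ and of $\zeta^\varepsilon(\tau,\cdot)/\varepsilon^{1/(n+1)}$ in $L^q(\Omega)$, entirely analogous to \eqref{estcontrsolbis}.

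Finally I would run the bootstrap of \Cref{bootstrap} on the sequence $\psi_n^\varepsilon$ starting from the $L^{n+1}$-Carleman estimate (instead of the $L^{2k+2}$ one), climbing up in $p$ via \Cref{wplp} and \Cref{injsobo}, which is indifferent to whether $n$ is odd or even. This gives uniform bounds of $\Phi_n^{-1}(h^\varepsilon)$ in $X_{\tau,p}$ for every $p\in[2,+\infty)$, and hence by Aubin--Lions a subsequence converging strongly in $L^\infty(Q_\tau)$ to some $H\in\bigcap_p X_{\tau,p}$. Since $\Phi_n$ is continuous, $h^\varepsilon=\Phi_n(\Phi_n^{-1}(h^\varepsilon))\to \Phi_n(H)$ in $L^\infty$, which allows one to pass to the limit in \eqref{heatzetaepsboot} and recover the desired control $h:=\Phi_n(H)\chi$, supported in $\omega$, vanishing at $t=0,\tau$, and with $\Phi_n^{-1}(h)=H\chi^{1/n}$ enjoying the stated bounds. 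The extension from smooth $y_0$ to $L^\infty(\Omega)$ follows by density, as at the end of \Cref{bootstrap}.

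The main subtlety, compared with the odd case, is checking that every step survives with $\Phi_n^{-1}$ in place of $x\mapsto x^{1/(2k+1)}$: strict convexity of $|x|^q/q$ (which holds since $q>1$), the differentiation identity $\frac{d}{dx}(|x|^q/q)=\Phi_n^{-1}(x)$ used in Euler-Lagrange, and the regularity preservation $H\in X_{\tau,p}\Rightarrow \Phi_n(H)\in L^p$ (which holds since $\Phi_n$ is locally Lipschitz on bounded sets and $H$ is uniformly bounded). None of these introduces genuine new difficulty; the odd/even dichotomy affects only sign bookkeeping already absorbed into $\Phi_n^{-1}$, so the proof is essentially a translation of \Cref{existencecontrolodd} with the dictionary $2k+1\leftrightarrow n$, $2k+2\leftrightarrow n+1$.
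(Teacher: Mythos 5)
Your proposal is correct and follows exactly the route the paper itself takes: the paper's proof of this proposition is precisely the two-line sketch "adapt \Cref{existencecontrolodd} with a Carleman estimate in $L^{n+1}$ and a penalized duality method in $L^{(n+1)/n}$, using that the antiderivative of $\Phi_n^{-1}$ is the strictly convex function $\frac{n}{n+1}\abs{x}^{(n+1)/n}$," which is the same dictionary $2k+1\leftrightarrow n$, $2k+2\leftrightarrow n+1$ you describe, and you correctly identify \Cref{lemmaprimitive} as the only point where the odd-power structure was used. The only cosmetic adjustment you leave implicit is that the cutoff $\chi$ must now be chosen with $\chi^{1/n}$ smooth instead of $\chi^{1/(2k+1)}$.
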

The proof of \Cref{lemcontreggene} is a slight adaptation of \Cref{existencecontrolodd}. First, we get a Carleman estimate in $L^{n+1}$ (see \Cref{carl2+2cor}). Secondly, we use a penalized duality method in $L^{(n+1)/n}$ as in \Cref{RUM} taking into account that the antiderivative of the continuous mapping $\Phi_{n}^{-1}$ (see \Cref{defracinepuissimpgene}) is the strictly convex function \[x \in \R \mapsto \frac{n}{n+1}\abs{x}^{\frac{n+1}{n}} := \left\{
\begin{array}{c l}
\frac{n}{n+1}\exp(\frac{n+1}{n} \log (\abs{x})) & \mathrm{if}\  x \neq 0,\\
0 & \mathrm{if}\  x =0.
\end{array}
\right.\]
\subsection{Global null-controllability for “even power systems” in $\C$}
Let $k \in \N^*$. We have seen in \Cref{resultatnegatifeven} that global null-controllability does not hold for 
\begin{equation*}
\left\{
\begin{array}{l l}
\partial_t u -  \Delta u =   h 1_{\omega} &\mathrm{in}\ (0,T)\times\Omega,\\
\partial_t v -  \Delta v = u^{2k}  &\mathrm{in}\ (0,T)\times\Omega,\\
u,v= 0&\mathrm{on}\ (0,T)\times\partial\Omega,\\
(u,v)(0,.)=(u_0,v_0)& \mathrm{in}\ \Omega.
\end{array}
\right.
\tag{Even}
\label{systemeeven}
\end{equation*}
A natural question, asked by Luc Robbiano, is: what happens if we consider \textit{complex-valued} functions? A positive answer i.e. a global null-controllability result for \eqref{systemeeven} with $k=1$ is given in \cite{CGR} (see \cite[Theorem 3]{CGR}). Here, we want to generalize this result for every $k \in \N^*$. We have the following result.
\begin{theo}\label{theoevenpower}
The system \eqref{systemeeven} is globally null-controllable (in the sense of \Cref{deflocalglobalcontr}).\\
\indent More precisely, there exists $(C_p)_{p \in [2,+\infty)} \in (0,\infty)^{[2,+\infty)}$ such that for every initial data $(u_0,v_0) \in L^{\infty}(\Omega)^2$, there exists a control $h \in \bigcap\limits_{p \in [2, +\infty)}L^p(Q_T;\C)$ satisfying
\begin{equation}\label{estimcontrolmr1even}
\forall p \in [2, +\infty),\ \norme{h}_{L^p(Q_T;\C)} \leq C_p \left(\norme{u_0}_{L^{\infty}(\Omega)} + \norme{v_0}_{L^{\infty}(\Omega)}^{1/(2k)}\right),
\end{equation}
and the solution $(u,v) \in L^{\infty}(Q_T,\C)^2$ of \eqref{systemeeven} verifies
$$(u,v)(T,.)=(0,0).$$
\end{theo}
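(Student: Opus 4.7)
The strategy mirrors that of Theorem \ref{mainresult1}: split $[0,T]$ into $[0,T/2]$ and $[T/2,T]$. On $[0,T/2]$, apply \Cref{nullheat} to drive $u_0$ to $0$ by a real $L^{\infty}$-control $h_1$; by the maximum principle this leaves a real intermediate state $v_1(T/2)$ satisfying $\norme{v_1(T/2)}_{L^{\infty}(\Omega)} \leq C(\norme{u_0}_{L^{\infty}(\Omega)}^{2k} + \norme{v_0}_{L^{\infty}(\Omega)})$. On $[T/2,T]$, we must drive the complex-valued $v$ from $v_1(T/2)$ to $0$ by a source of the form $u_2^{2k}$ with $u_2$ complex-valued, where $u_2$ itself arises from some complex control $h_2$. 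As in \Cref{scetape}, this step reduces to a complex-valued analog of \Cref{lemcontreg}: for every $\tau>0$ and $y_0 \in L^{\infty}(\Omega)$, produce $U \in \bigcap_{p \in [2,+\infty)} X_{\tau,p}$ complex-valued, spatially supported in $\overline{\omega}$, with $U(0,\cdot) = U(\tau,\cdot) = 0$ and $\norme{U}_{X_{\tau,p}} \leq C_{\tau,p}\norme{y_0}_{L^{\infty}(\Omega)}^{1/(2k)}$, such that the heat equation with source $U^{2k}$ and initial datum $y_0$ vanishes at time $\tau$.

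To prove this complex analog, we adapt \Cref{existencecontrolodd} with exponent $2k+1$ replacing $2k+2$ and conjugate $q' := (2k+1)/(2k)$ replacing $q = (2k+2)/(2k+1)$, and work throughout with complex-valued functions. The Carleman inequality in $L^{2k+1}$ for the complex backward heat adjoint follows from the bootstrap of \Cref{carl2+2cor} with the exponent $2k+1$ (or equivalently applied separately to real and imaginary parts of $\varphi$). The penalized duality minimization reads
\[
\min_{h \in L_{wght}^{q'}((0,T)\times\omega;\mathbb{C})}\ J_{\varepsilon}(h) := \frac{1}{q'}\int_{Q_T} W\abs{h}^{q'} + \frac{1}{q'\varepsilon}\norme{\zeta(T)}_{L^{q'}(\Omega)}^{q'},\quad W := e^{(q'/2)s\rho\eta}(s\eta)^{-3q'/2},
\]
with $\zeta$ the solution of the heat equation with source $h\chi$ and initial datum $y_0$. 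Strict convexity of $z \mapsto \abs{z}^{q'}$ on $\mathbb{C}$ (identified with $\mathbb{R}^2$) and coercivity yield a unique minimizer $h^{\varepsilon}$, and the Euler-Lagrange identity, derived by testing against all complex variations, takes the explicit form $h^{\varepsilon} = W^{-2k}\chi^{2k}\abs{\varphi^{\varepsilon}}^{2k-1}\varphi^{\varepsilon}$, where $\varphi^{\varepsilon}$ is the complex adjoint with $\varphi^{\varepsilon}(T) = -\abs{\zeta^{\varepsilon}(T)}^{q'-2}\zeta^{\varepsilon}(T)/\varepsilon$.

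The principal new difficulty, absent in the real odd case, is to factor $h^{\varepsilon} = (U^{\varepsilon})^{2k}\chi$ with $U^{\varepsilon}$ regular. Since $h^{\varepsilon}/\chi$ is a positive real multiple of $\varphi^{\varepsilon}$, the formal candidate reads
\[
U^{\varepsilon} := W^{-1}\chi^{(2k-1)/(2k)}\abs{\varphi^{\varepsilon}}^{(2k-1)/(2k)}(\varphi^{\varepsilon})^{1/(2k)},
\]
and the obstruction is the continuous selection of a $2k$-th root of the complex function $\varphi^{\varepsilon}$ on the support of $\chi$. This fails whenever $\arg(\varphi^{\varepsilon})$ winds around zero or when $\varphi^{\varepsilon}$ has zeros where the fractional-power extraction destroys Sobolev regularity. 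Following the complex shift trick of \cite[Theorem 3]{CGR} for $k=1$, we propose to regularize the minimization by penalizing $\norme{\zeta(T) - i\delta}_{L^{q'}(\Omega)}^{q'}$ for a small constant $\delta > 0$, pushing $\varphi^{\varepsilon}$ away from zero and away from winding (the latter by unique continuation for the backward heat equation), so that a smooth branch of $(\varphi^{\varepsilon})^{1/(2k)}$ exists over the support of $\chi$. The bootstrap of \Cref{bootstrap} then lifts $U^{\varepsilon}$ to $\bigcap_{p \in [2,+\infty)} X_{T,p}$ with the required bounds, and Aubin-Lions compactness together with weak convergence as $\varepsilon \to 0$, followed by $\delta \to 0$, produces the desired $U$. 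Carrying out the branch selection rigorously through this double limit is the central technical point of the argument.
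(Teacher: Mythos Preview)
Your first step on $[0,T/2]$ matches the paper exactly. The second step, however, diverges sharply from the paper's argument and contains a genuine gap.

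The gap is precisely where you place it: you reduce everything to a continuous, regular selection of a $2k$-th root of the complex adjoint $\varphi^{\varepsilon}$ on the support of $\chi$, propose an $i\delta$-shift to avoid zeros and winding, and then write that ``carrying out the branch selection rigorously through this double limit is the central technical point of the argument''---without carrying it out. This is not a detail. The shift is applied to $\zeta^{\varepsilon}(T)$, hence to $\varphi^{\varepsilon}(T)$, but nothing prevents $\varphi^{\varepsilon}(t,\cdot)$ from vanishing at intermediate times on $\omega$; unique continuation for the backward heat equation rules out identical vanishing on open sets, not pointwise zeros, so winding of $\arg\varphi^{\varepsilon}$ around such zeros is not excluded. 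Even where a branch exists, the map $z\mapsto |z|^{(2k-1)/(2k)} z^{1/(2k)}$ does not preserve $X_{T,p}$-regularity near zeros of $z$, so your bootstrap claim for $U^{\varepsilon}$ is unsupported. Passing to the double limit $\varepsilon\to 0$, $\delta\to 0$ while keeping both the factorization $h=(U)^{2k}\chi$ and the $X_{T,p}$-bounds is not addressed at all.

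The paper avoids this difficulty entirely by an algebraic trick that stays inside the \emph{real-valued} Reflexive Uniqueness Method. Instead of working with exponent $2k$ in the complex setting, it applies \Cref{lemcontreggene} with $n=4k$: one obtains a \emph{real} control $H$ steering $v$ to zero, with $\Phi_{4k}^{-1}(H)\in\bigcap_{p}X_{(T/2,T),p}$. Writing $\alpha=e^{i\pi/(2k)}$ and
\[
u_2:=\bigl((\Phi_{4k}^{-1}(H))^{+}\bigr)^{2}+\alpha\,\bigl((\Phi_{4k}^{-1}(H))^{-}\bigr)^{2},
\]
one has $u_2^{2k}=H^{+}+\alpha^{2k}H^{-}=H^{+}-H^{-}=H$ because $\alpha^{2k}=-1$ and the positive and negative parts have disjoint supports. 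The regularity of $u_2$ follows since $x\mapsto (x^{\pm})^{2}$ lies in $W^{2,\infty}_{\mathrm{loc}}(\R)$, so composition with $\Phi_{4k}^{-1}(H)\in\bigcap_{p}X_{(T/2,T),p}$ preserves that space. No branch selection, no complex adjoint, no double limit is needed; the only ``complex'' ingredient is the fixed scalar $\alpha$. Your route, even if it could be made rigorous, is substantially harder than necessary.
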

The strategy of proof of \Cref{theoevenpower} is the same as for \Cref{mainresult1} (see \Cref{preuveglobale}). The first step of the proof i.e. \Cref{fsetape} does not change but we have to modify some arguments given in \Cref{scetape}.\\
\indent Let us fix $(u_0,v_0)\in L^{\infty}(\Omega)^2$ until the end of the section.
\subsubsection{First step of the proof: steer $u$ to $0$}
First, we find a control of \eqref{systemeeven} which steers $u$ to $0$ in time $T/2$ (see the proof of \Cref{controlT/2}). 
\begin{prop}\label{controlevenT/2}
There exists $h_1 \in L^{\infty}((0,T/2)\times \Omega;\R)$ satisfying 
\begin{equation}
\norme{h_1}_{L^{\infty}((0,T/2)\times \Omega;\R)} \leq C \norme{u_0}_{L^{\infty}(\Omega)},
\label{esticontrolT/2even}
\end{equation}
such that the solution $(u_1,v_1) \in L^{\infty}((0,T/2)\times \Omega;\R)^2 $ of
\begin{equation*}
\left\{
\begin{array}{l l}
\partial_t u_1 -  \Delta u_1 =   h_1 1_{\omega} &\mathrm{in}\ (0,T/2)\times\Omega,\\
\partial_t v_1 -  \Delta v_1 = u_1^{2k}  &\mathrm{in}\ (0,T/2)\times\Omega,\\
u_1,v_1= 0&\mathrm{on}\ (0,T/2)\times\partial\Omega,\\
(u_1,v_1)(0,.)=(u_0,v_0)& \mathrm{in}\ \Omega,
\end{array}
\right.
\label{systemeoddT/2even}
\end{equation*}
satisfies $u_1(T/2,.)=0$. Moreover, we have 
\begin{equation}\label{estiv1T/2even}
\norme{v_1(T/2,.)}_{L^{\infty}(\Omega)} \leq C \left(\norme{u_0}_{L^{\infty}(\Omega)}^{2k} + \norme{v_0}_{L^{\infty}(\Omega)}\right).
\end{equation}
\end{prop}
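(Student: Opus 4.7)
The plan is to mimic almost verbatim the proof of Proposition \ref{controlT/2}, since the first step of the strategy is insensitive to the parity of the exponent: it concerns only the null-controllability of the scalar heat equation driving $u_1$, together with a direct $L^\infty$ estimate on the second equation viewed as an uncontrolled heat equation with source $u_1^{2k}$.

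More precisely, I would apply Proposition \ref{nullheat} with $\tau = T/2$ and $y_0 = u_0 \in L^\infty(\Omega)$, producing a real-valued control $h_1 \in L^\infty((0,T/2)\times\Omega;\R)$ with
\[
\norme{h_1}_{L^\infty((0,T/2)\times\Omega)} \leq C\norme{u_0}_{L^\infty(\Omega)},
\]
such that the associated $u_1 \in L^\infty((0,T/2)\times\Omega)$ satisfies $u_1(T/2,\cdot) = 0$. This yields \eqref{esticontrolT/2even}. By \Cref{wpl2} with $p = +\infty$ applied to the equation satisfied by $u_1$, we then obtain
\[
\norme{u_1}_{L^\infty((0,T/2)\times\Omega)} \leq C\norme{u_0}_{L^\infty(\Omega)}.
\]

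Next I would define $v_1 \in L^\infty((0,T/2)\times\Omega)$ as the unique solution, granted by \Cref{wpl2} with $p = +\infty$, of
\[
\left\{
\begin{array}{l l}
\partial_t v_1 - \Delta v_1 = u_1^{2k} & \text{in } (0,T/2)\times\Omega,\\
v_1 = 0 & \text{on } (0,T/2)\times\partial\Omega,\\
v_1(0,\cdot) = v_0 & \text{in } \Omega.
\end{array}
\right.
\]
Since $u_1^{2k}$ is real-valued and bounded by $\norme{u_1}_{L^\infty}^{2k} \leq C\norme{u_0}_{L^\infty(\Omega)}^{2k}$, the $L^\infty$ estimate \eqref{estlinftyfaible} for $p = +\infty$ gives
\[
\norme{v_1(T/2,\cdot)}_{L^\infty(\Omega)} \leq C\bigl(\norme{u_1^{2k}}_{L^\infty((0,T/2)\times\Omega)} + \norme{v_0}_{L^\infty(\Omega)}\bigr) \leq C\bigl(\norme{u_0}_{L^\infty(\Omega)}^{2k} + \norme{v_0}_{L^\infty(\Omega)}\bigr),
\]
which is exactly \eqref{estiv1T/2even}. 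The triple $((u_1,v_1),h_1)$ then has all the required properties.

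There is essentially no obstacle here: the only subtlety compared to the odd case is the sign of the source term $u_1^{2k} \geq 0$, which is precisely the obstruction to global null-controllability in the real-valued framework (the very reason Proposition \ref{resultatnegatifeven} prevents going further with real controls). But this obstruction is not felt in the first step, because we are not yet trying to bring $v$ back to zero; we only need an $L^\infty$ bound on $v_1(T/2,\cdot)$ propagated from the initial data and the source. The difficulty will be genuinely confined to the second step on $(T/2,T)$, where one must construct a complex-valued control driving $v$ to zero, which is where the analogue of \Cref{lemcontreg} adapted to the even power case (and complex-valued controls) must be invoked.
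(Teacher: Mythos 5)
Your proposal is correct and follows exactly the paper's own route: the paper proves \Cref{controlevenT/2} by pointing back to the proof of \Cref{controlT/2}, i.e.\ applying \Cref{nullheat} with $\tau=T/2$, $y_0=u_0$ to steer $u_1$ to zero with the stated bound, then defining $v_1$ via \Cref{wpl2} and using \eqref{estlinftyfaible} with $p=+\infty$ to obtain \eqref{estiv1T/2even}. Your closing observation that the sign obstruction $u_1^{2k}\geq 0$ only bites in the second step is also consistent with how the paper organizes the argument.
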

\subsubsection{Second step of the proof: steer $v$ to $0$}
The aim of this part is to find a \textit{complex} control of \eqref{systemeeven} which steers $v$ to $0$ (and $u$ from $0$ to $0$) in time $T$.
\begin{prop}\label{controlTeven}
Let $((u_1,v_1),h_1)$ as in \Cref{controlevenT/2}.\\
\indent There exists a control $h_2 \in \bigcap_{p \in [2, +\infty)}L^p((T/2,T)\times \Omega;\C)$ satisfying
\begin{equation}
\forall p \in [2,+\infty),\ \norme{h_2}_{L^{p}((T/2,T)\times \Omega);\C)} \leq C_p\left( \norme{u_0}_{L^{\infty}(\Omega)} + \norme{v_0}_{L^{\infty}(\Omega)}^{1/(2k)}\right).
\label{esticontrolTeven}
\end{equation}
such that the solution $(u_2,v_2) \in L^{\infty}((T/2,T)\times \Omega;\C)^2 $ of
\begin{equation*}
\left\{
\begin{array}{l l}
\partial_t u_2 -  \Delta u_2 =   h_2 1_{\omega} &\mathrm{in}\ (T/2,T)\times\Omega,\\
\partial_t v_2 -  \Delta v_2 = u_2^{2k}  &\mathrm{in}\ (T/2,T)\times\Omega,\\
u_2,v_2= 0&\mathrm{on}\ (T/2,T)\times\partial\Omega,\\
(u_2,v_2)(T/2,.)=(0,v_1(T/2,.))& \mathrm{in}\ \Omega,
\end{array}
\right.
\label{systemeevenT}
\end{equation*}
satisfies $(u_2,v_2)(T,.)=(0,0)$.
\end{prop}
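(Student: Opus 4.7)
The proof should mirror Proposition \ref{controlT} in a complex-valued setting. As in Section \ref{scetape}, I would view the second equation of \eqref{systemeeven} as a controlled complex heat equation with state $v_{2}$ and ``control input'' $u_{2}^{2k}$, and reduce Proposition \ref{controlTeven} to the following complex analog of Proposition \ref{lemcontreg}: for every $y_{0}\in L^{\infty}(\Omega;\C)$, there exists a control $H$ of the complex heat equation driving $y_{0}$ to $0$ at time $\tau$, supported in $[0,\tau]\times\overline{\omega}$, vanishing at $t=0$ and $t=\tau$, and such that a suitable complex $2k$-th root $u:=H^{1/(2k)}$ lies in $\bigcap_{p\in[2,+\infty)}X_{\tau,p}$ with $\norme{H^{1/(2k)}}_{X_{\tau,p}}\leq C_{\tau,p}\norme{y_{0}}_{L^{\infty}(\Omega)}^{1/(2k)}$. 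Given this analog, applied with $\tau\leftarrow T/2$ and $y_{0}\leftarrow v_{1}(T/2,.)$, I would set $u_{2}:=H^{1/(2k)}$ and $h_{2}:=\partial_{t}u_{2}-\Delta u_{2}$, and invoke \eqref{estiv1T/2even} to obtain \eqref{esticontrolTeven}.

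To build this complex analog, I would rerun the two ingredients of Section \ref{existencecontrolodd} with the conjugate pair $(p,q)=(2k+1,(2k+1)/(2k))$ in place of $((2k+2),(2k+2)/(2k+1))$. First, prove an $L^{2k+1}$-Carleman estimate for the complex heat equation by running the bootstrap of Theorem \ref{carl2+2cor} and Corollary \ref{carl4usefaiblecor} verbatim after replacing $2k+2$ by $2k+1$; the starting $L^{2}$-Carleman of Proposition \ref{carl2lem} applies componentwise to real and imaginary parts. Second, apply a complex Reflexive Uniqueness Method: minimize
\begin{equation*}
J(h):=\frac{1}{q}\int\int_{(0,\tau)\times\omega}e^{(q/2)s\rho(x)\eta(t)}(s\eta)^{-3q/2}|h|^{q}\,dxdt+\frac{1}{q\varepsilon}\norme{\zeta(\tau,.)}_{L^{q}(\Omega;\C)}^{q}
\end{equation*}
over complex-valued $h\in L^{q}_{wght}((0,\tau)\times\omega;\C)$, where $\zeta$ solves the complex heat equation with source $h\chi$ and initial datum $y_{0}$. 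Strict convexity of $|h|^{q}$ on $\C\simeq\R^{2}$ for $q>1$ provides a unique minimizer $h^{\varepsilon}$; the Euler--Lagrange equation, combined with the duality with the complex adjoint $\varphi^{\varepsilon}$, the $L^{2k+1}$-observability inequality, and the bootstrap of Section \ref{bootstrap}, yield uniform bounds in $\bigcap_{p\in[2,+\infty)}X_{(0,\tau),p}$ on a complex function $w^{\varepsilon}$ satisfying the pointwise identity $h^{\varepsilon}=\Phi_{2k}(w^{\varepsilon})$ with $\Phi_{2k}(z):=z|z|^{2k-1}$, of order $\norme{y_{0}}_{L^{\infty}(\Omega)}^{1/(2k)}$.

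The main obstacle, and the only genuine new point with respect to the odd case, is the passage from $h^{\varepsilon}=\Phi_{2k}(w^{\varepsilon})=|w^{\varepsilon}|^{2k}e^{i\arg w^{\varepsilon}}$ to the required representation $h^{\varepsilon}=(u^{\varepsilon})^{2k}$. The natural candidate
\begin{equation*}
u^{\varepsilon}:=|w^{\varepsilon}|\,e^{i(\arg w^{\varepsilon})/(2k)}
\end{equation*}
satisfies $(u^{\varepsilon})^{2k}=h^{\varepsilon}$ wherever $w^{\varepsilon}\neq 0$, but its smoothness across the zero set of $w^{\varepsilon}$ requires $w^{\varepsilon}$ to be genuinely complex-valued, so that this zero set is generically of real codimension two and $(\arg w^{\varepsilon})/(2k)$ admits a continuous determination. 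In the odd case no such branch arises because $\Phi_{2k+1}(w)=w|w|^{2k}=w^{2k+1}$ on $\R$, but here a real-valued datum $y_{0}=v_{1}(T/2,.)$ forces $\zeta^{\varepsilon},\varphi^{\varepsilon}$ and hence $w^{\varepsilon}$ to be real by uniqueness symmetry, in which case $\arg w^{\varepsilon}\in\{0,\pi\}$ jumps at sign changes. The resolution I anticipate is to break this real symmetry at the level of the penalization, either by adding a linear term of the form $\lambda\int_{\Omega}g\cdot\mathrm{Im}(\zeta(\tau,.))$ to $J$, or by applying the construction on two nested sub-intervals of $(T/2,T)$ to first create a nontrivial imaginary part of $v_{2}$ using an elementary closed-form complex control; either device forces $\varphi^{\varepsilon}$ and $w^{\varepsilon}$ to be genuinely complex, after which the bootstrap regularity of $w^{\varepsilon}$ gives $u^{\varepsilon}\in\bigcap_{p\in[2,+\infty)}X_{(0,\tau),p}$ with the quantitative bound, and the proof concludes by the compactness argument of Section \ref{bootstrap}.
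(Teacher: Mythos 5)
Your overall framing (reduce to a key lemma producing a control of the heat equation that is a $2k$-th power of a regular complex function) matches the paper, and you have correctly located the real difficulty: a sign-changing real control $H$ has no regular real $2k$-th root. But your proposed resolution does not work, and it is not what the paper does. First, breaking the real symmetry so that $w^{\varepsilon}$ becomes genuinely complex-valued does not make $u^{\varepsilon}:=|w^{\varepsilon}|e^{i(\arg w^{\varepsilon})/(2k)}$ regular: even near a nondegenerate zero of a smooth complex-valued $w$ (say $w\approx x_1+ix_2$), the modulus $|w|$ is Lipschitz but not $W^{2,p}$ for $p\geq 2$ (its Laplacian behaves like $1/|w|$), so the required bound in $X_{\tau,p}$ for all $p\in[2,+\infty)$ is lost exactly where you need it; moreover a continuous determination of $\arg w^{\varepsilon}$ need not exist globally (winding around zeros), and neither of your symmetry-breaking devices comes with any quantitative control compatible with the estimate \eqref{esticontrolTeven}. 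Second, rerunning the Carleman/duality machinery with the non-conjugate-even pair $(2k+1,(2k+1)/(2k))$ and complex-valued adjoint states is unnecessary.

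The paper's proof stays entirely real in the duality step and handles the complex phase by pure pointwise algebra. It applies \Cref{lemcontreggene} with exponent $n=4k$ (not $2k$) to $y_0=v_1(T/2,.)$, producing a \emph{real} control $H$ whose real $4k$-th root $w:=\Phi_{4k}^{-1}(H)$ lies in $\bigcap_{p\in[2,+\infty)}X_{(T/2,T),p}$ with $\norme{w}_{X_{(T/2,T),p}}\leq C_p\norme{v_1(T/2,.)}_{L^{\infty}(\Omega)}^{1/(4k)}$. It then sets
\begin{equation*}
u_2:=\bigl(w^{+}\bigr)^{2}+e^{\frac{i\pi}{2k}}\bigl(w^{-}\bigr)^{2}.
\end{equation*}
Since $x\mapsto (x^{+})^{2}$ and $x\mapsto (x^{-})^{2}$ are $W^{2,\infty}_{loc}(\R)$, the composition preserves the $X_{(T/2,T),p}$ regularity (this is the whole point of going to the $4k$-th root: squaring the positive and negative parts is smooth enough, whereas taking $|w|$ or a fractional root is not), and the bound becomes $\norme{v_1(T/2,.)}_{L^{\infty}}^{2/(4k)}=\norme{v_1(T/2,.)}_{L^{\infty}}^{1/(2k)}$, which is the exponent in \eqref{esticontrolTeven}. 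Because $w^{+}$ and $w^{-}$ have disjoint supports, all cross terms in $u_2^{2k}$ vanish and
\begin{equation*}
u_2^{2k}=\bigl(w^{+}\bigr)^{4k}+e^{i\pi}\bigl(w^{-}\bigr)^{4k}=H^{+}-H^{-}=H,
\end{equation*}
so $v_2$ is exactly the trajectory driven to zero by $H$, and one concludes by setting $h_2:=\partial_t u_2-\Delta u_2$, supported in $\omega$ and bounded in every $L^p$. I recommend you replace your branch-of-argument construction by this $4k$-th-root plus positive/negative-part decomposition; as written, your argument has a genuine gap at the regularity of the complex $2k$-th root across the zero set.
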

\indent Our approach consists in looking at the second equation of \eqref{systemeodd} like a controlled heat equation where the \textit{state} is $v(t,.)$ and the \textit{control input} is $u^{2k}(t,.)$. Here, \textbf{the question consists in proving that the heat equation is null-controllable with a localized control which \textit{is as an even power}} of a regular \textit{complex} function.

Now, we can prove \Cref{controlTeven}.
\begin{proof}
We apply \Cref{lemcontreggene} with $n=4k$, $(0,\tau)\leftarrow(T/2,T)$, $y_0 \leftarrow v_1(T/2,.) \in L^{\infty}(\Omega)$. Then, there exists a control $H\in L^{\infty}((T/2,T)\times\Omega)$ such that 
\begin{align}
&\Phi_{4k}^{-1}(H) \in \bigcap_{p \in [2, +\infty)}X_{(T/2,T),p}\label{Hregeven}\\
&\forall p \in [2,+\infty),\ \norme{\Phi_{4k}^{-1}(H)}_{X_{(T/2,T),p}} \leq C_{p} \norme{v_1(T/2,.)}_{L^{\infty}(\Omega)}^{1/(4k)},\label{Hregestieven}\\
& H(T/2,.) = H(T,.) = 0\label{Hnuleven},\\
& \forall t \in [T/2,T],\ supp(H(t,.)) \subset\subset \omega\label{suppHeven},
\end{align}
and the solution $v_2$ of 
\begin{equation}
\left\{
\begin{array}{l l}
\partial_t v_2 -  \Delta v_2 = H  &\mathrm{in}\ (T/2,T)\times\Omega,\\
v_2= 0&\mathrm{on}\ (T/2,T)\times\partial\Omega,\\
v_2(T/2,.)=v_1(T/2,.)& \mathrm{in}\ \Omega,
\end{array}
\right.
\label{systemevTeven}
\end{equation}
satisfies \begin{equation}v_2(T,.) = 0.\label{nulvTeven}\end{equation}\\
We introduce the notation $$\alpha := e^{\frac{i \pi}{2k}} \in \C.$$
We take $u_2$, the \textit{complex-valued function}, as
\begin{equation}
u_2 := \Big(\left((\Phi_{4k}^{-1}(H)\right)^{+}\Big)^2 + \alpha \Big(\left((\Phi_{4k}^{-1}(H)\right)^{-}\Big)^2, \label{defu2even}\end{equation}
where the positive and negative parts of a real number $x$ are defined as follows \[x^{+} := \max(x,0),\qquad x^{-} := -\min(x,0).\]
From \Cref{defracinepuissimpgene} and \eqref{defu2even}, we verify that
\begin{equation}u^{2k} = \Big(\left((\Phi_{4k}^{-1}(H)\right)^{+}\Big)^{4k} + \alpha^{2k}  \Big(\left((\Phi_{4k}^{-1}(H)\right)^{-}\Big)^{4k} = H^{+} - H^{-} =  H.\label{u22k=H}\end{equation}
Moreover, we have
\begin{equation}
x \mapsto (x^{+})^{2} \in W_{loc}^{2, \infty}(\R),\qquad x \mapsto (x^{-})^{2} \in W_{loc}^{2, \infty}(\R).
\label{regpartpospartnet}
\end{equation}
From \eqref{Hregeven}, \eqref{defu2even} and \eqref{regpartpospartnet}, we have
\begin{equation}
u_2  \in \left(\bigcap_{p \in [2, +\infty)}X_{(T/2,T),p}\right)\cap L^{\infty}((T/2,T)\times\Omega;\C).\label{u2regeven}\end{equation}
We have, from \eqref{Hnuleven},
\begin{equation}u_2(T/2,.)= u_2(T,.) = 0.\label{u2nulextreven}\end{equation}
Then, we set, from \eqref{defu2even}, \eqref{u2regeven} and \eqref{suppHeven},
\begin{equation}h_2:= \partial_t u_2 - \Delta u_2 \in \bigcap\limits_{p \in [2,+\infty)}L^p((T/2,T)\times\Omega;\C),\label{defh2even}\end{equation}
which is supported in $(T/2,T)\times\omega$.
From \eqref{Hregestieven}, \eqref{estiv1T/2even} and \eqref{defu2even}, we have 
\begin{equation}
\forall p \in [2,+\infty),\ \norme{h_2}_{L^{p}((T/2,T)\times \Omega);\C)} \leq C_p\left( \norme{u_0}_{L^{\infty}(\Omega)} + \norme{v_0}_{L^{\infty}(\Omega)}^{1/(2k)}\right).
\label{esticontrolTevenbis}
\end{equation}
By using \eqref{suppHeven}, \eqref{systemevTeven}, \eqref{nulvTeven}, \eqref{u22k=H}, \eqref{u2regeven}, \eqref{u2nulextreven}, \eqref{defh2even} and \eqref{esticontrolTevenbis}, we check that $((u_2,v_2),h_2)$ satisfies \Cref{controlTeven}.
\end{proof}
\appendix
\section{}\label{appendix}

\DeactivateToc
\subsection{Proof of the uniqueness of the point $2.$ of \Cref{defpropsolNL}}\label{proofuniqueness}

\begin{proof}
Let $(u_0,v_0) \in L^{\infty}(\Omega)^2$, $h \in L^2(Q_T)$.\\
\indent Let $(u,v) \in (W_T \cap L^{\infty}(Q_T))^2$ and $(\widetilde{u},\widetilde{v}) \in (W_T\cap L^{\infty}(Q_T))^2$ be two solutions of \eqref{systemef1f2}. Then, the function $(\widehat{u},\widehat{v}):=(u-\widetilde{u},v - \widetilde{v}) \in (W_T \cap L^{\infty}(Q_T))^2$ satisfies (in the weak sense)
\begin{equation}
\left\{
\begin{array}{l l}
\partial_t \widehat{u} -  \Delta \widehat{u} =   f_1(u,v) - f_1(\widetilde{u},\widetilde{v}) &\mathrm{in}\ (0,T)\times\Omega,\\
\partial_t \widehat{v} -  \Delta \widehat{v} =f_2(u,v) - f_2(\widetilde{u},\widetilde{v}) &\mathrm{in}\ (0,T)\times\Omega,\\
\widehat{u},\widehat{v}= 0&\mathrm{on}\ (0,T)\times\partial\Omega,\\
(\widehat{u},\widehat{v})(0,.)=(0,0)& \mathrm{in}\ \Omega.
\end{array}
\right.
\label{systemediffsol}
\end{equation}
By taking $(w_1,w_2):=(\widehat{u},\widehat{v})$ in the variational formulation of \eqref{systemediffsol} (see \eqref{formvarNL1} and \eqref{formvarNL2}) and by using the fact that the mapping $t \mapsto \norme{(\widehat{u}(t),\widehat{v}(t))^{T}}_{L^2(\Omega)^2}^2$ is absolutely continuous (see \cite[Section 5.9.2, Theorem 3]{E}) with for a.e. $0 \leq t \leq T$,
\[\frac{d}{dt} \norme{(\widehat{u}(t),\widehat{v}(t))^{T}}_{L^2(\Omega)^2}^2 = 2 \left(\Big((\partial_t \widehat{u}(t),\widehat{u}(t)),(\partial_t \widehat{v}(t),\widehat{v}(t))\Big)^{T}\right)_{(H^{-1}(\Omega)^2,H_0^1(\Omega)^2)},\]we find that for a.e. $0 \leq t \leq T$,
\begin{align*}
&\frac{1}{2}\frac{d}{dt} \left(\norme{(\widehat{u}(t),\widehat{v}(t))^T}_{L^2(\Omega)^2}^2\right) + \norme{(\nabla \widehat{u},\nabla \widehat{v})^{T}}_{L^2(\Omega)^2}^2\\
& = \left(\Big((f_1(u,v) - f_1(\widetilde{u},\widetilde{v}),\widehat{u}), (f_2(u,v) - f_2(\widetilde{u},\widetilde{v}),\widehat{v})\Big)^{T}\right)_{(L^2(\Omega)^2,L^2(\Omega)^2)}.
\label{ddtv}
\end{align*}
By using the facts that $(u,\widetilde{u})\in L^{\infty}(Q_T)^2$ and $(x,y) \mapsto f_1(x,y)$, $(x,y) \mapsto f_2(x,y)$ are locally Lipschitz on $\R^2$, we find the differential inequality
\begin{equation*}
\frac{d}{dt} \left(\norme{(\widehat{u}(t),\widehat{v}(t))^T}_{L^2(\Omega)^2}^2\right) \leq C\left(\norme{(\widehat{u}(t),\widehat{v}(t))^T}_{L^2(\Omega)^2}^2\right)\  \text{for\ a.e.}\ 0 \leq t \leq T.
\label{ddtvbis}
\end{equation*}
Gronwall lemma and the initial condition $(\widehat{u}(0),\widehat{v}(0)) = (0,0)$ (see \eqref{initNL}) prove that $(\widehat{u}(t),\widehat{v}(t))=(0,0)$. Consequently, $(u,v) =(\widetilde{u},\widetilde{v})$.
\end{proof}

\subsection{Proof of the general case for \Cref{carl4usefaiblecor}}\label{appendixdensity}
\begin{proof}
Let $\varphi_T \in L^{2k+2}(\Omega)$ and $(\varphi_{T,n})_{n \in \N} \in (C_0^{\infty}(\Omega))^{\N}$ such that
\begin{equation}
\varphi_{T,n} \underset{n \rightarrow + \infty} \rightarrow \varphi_T \ \text{in}\ L^{2k+2}(\Omega).
\label{convinit}
\end{equation}
We denote by $(\varphi_n)_{n \in \N}$ the sequence of solutions of  
\begin{equation}
\left\{
\begin{array}{l l}
-\partial_t \varphi_n - \Delta\varphi_n = 0 &(0,T)\times\Omega,\\
\varphi_n = 0 &(0,T)\times\partial\Omega,\\
\varphi_n(T,.)=\varphi_{T,n} &\Omega.
\end{array}
\right.
\label{equationphiadjn}
\end{equation}
The estimates \eqref{carl2k+2usefaible} and \eqref{carl2k+2usefortpart} hold true for $(\varphi_n)_{n \in \N}$ by the \textbf{Step 1} of the proof of \Cref{carl4usefaiblecor}. Moreover, from \eqref{convinit}, \eqref{equationphiadjn}, \eqref{equationphiadj} and \Cref{wpl2} (particularly \eqref{estlinftyfaible} for $p=2k+2$), we have
\begin{align}
\norme{\varphi_n - \varphi}_{L^{2k+2}(Q_T)} \leq C \norme{\varphi_{T,n} - \varphi_T}_{L^{2k+2}(\Omega)} \underset{n \rightarrow + \infty}\rightarrow 0,
\label{convsol}
\end{align}
where $\varphi \in L^{2k+2}(Q_T)$ is the solution of \eqref{equationphiadj}. By using
\begin{equation}
\chi^{2k+2} e^{-(k+1)s \rho(x) \eta(t)}(s\eta)^{3(k+1)} \in L^{\infty}(Q_T),
\label{estlinfty}
\end{equation}
and \eqref{convsol}, we get
\begin{align}\label{convobs}
&\int\int_{(0,T)\times\omega} \chi^{2k+2} e^{-(k+1)s \rho \eta}(s\eta)^{3(k+1)}|\varphi_n|^{2k+2}\\
& \underset{n \rightarrow + \infty} \rightarrow \int\int_{(0,T)\times\omega} \chi^{2k+2} e^{-(k+1)s \rho \eta}(s\eta)^{3(k+1)}|\varphi|^{2k+2}.\notag
\end{align}
From \eqref{carl2k+2usefaible}, \eqref{carl2k+2usefortpart} applied to $(\varphi_n)_{n \in \N}$ and \eqref{convobs}, we deduce that $(\varphi_n(0,.))_{n \in \N}$, (respectively $( e^{-\rho s \eta /2} (s \eta)^{-m/2} \varphi_n)_{n \in \N}$, respectively $( e^{-\rho s \eta /2} (s \eta)^{-(m+2)/2} \nabla\varphi_n)_{n \in \N}$) is bounded in $L^{2k+2}(\Omega)$, (respectively $L^{2k+2}(Q_T)$, respectively $L^{2k+2}(Q_T)$) which is a Banach reflexive space. Then, up to a subsequence, we can assume that 
\begin{equation}
\varphi_n(0,.) \underset{n \rightarrow + \infty}\rightharpoonup \varphi(0,.)\ \text{in}\ L^{2k+2}(\Omega),
\label{convweak1}
\end{equation}
\begin{equation}
e^{-\rho s \eta /2} (s \eta)^{-m/2} \varphi_n \underset{n \rightarrow + \infty}\rightharpoonup e^{-\rho s \eta /2} (s \eta)^{-m/2} \varphi\ \text{in}\ L^{2k+2}(Q_T),
\label{convweak2}
\end{equation}
\begin{equation}
e^{-\rho s \eta /2} (s \eta)^{-(m+2)/2}\nabla\varphi_n \underset{n \rightarrow + \infty}\rightharpoonup e^{-\rho s \eta /2} (s \eta)^{-(m+2)/2} \nabla\varphi\ \text{in}\ L^{2k+2}(Q_T).
\label{convweak3}
\end{equation}
In particular, we have
\begin{align}
\label{proofcarl4faible}
\norme{\varphi(0,.)}_{L^{2k+2}(\Omega)}^{2k+2}
&\leq \liminf_{n \rightarrow + \infty} \norme{\varphi_n(0,.)}_{L^{2k+2}(\Omega)}^{2k+2}\\
 &\leq C_s \liminf_{n \rightarrow + \infty}\int\int_{(0,T)\times\omega} \chi^{2k+2} e^{-(k+1)s \rho(x) \eta(t)}(s\eta)^{3(k+1)}|\varphi_n|^{2k+2} dxdt \notag\\
&\leq C_s \int\int_{(0,T)\times\omega} \chi^{2k+2} e^{-(k+1)s \rho(x) \eta(t)}(s\eta)^{3(k+1)}|\varphi|^{2k+2} dxdt\notag,
\end{align}
and
\begin{align}
\label{proofcarl4fort}
&\int\int_{(0,T)\times\Omega} e^{-(k+1)s \rho \eta}((s\eta)^{-(k+1)m} |\varphi|^{2k+2} + (s \eta)^{-(k+1)(m+2)} |\nabla \varphi |^{2k+2})\\
&\leq \liminf_{n \rightarrow + \infty}  \int\int_{(0,T)\times\Omega} e^{-(k+1)s \rho \eta}((s\eta)^{-(k+1)m} |\varphi_n|^{2k+2} + (s \eta)^{-(k+1)(m+2)} |\nabla \varphi_n |^{2k+2})\notag\\
&\leq C \liminf_{n \rightarrow + \infty}\int\int_{(0,T)\times\omega} \chi^{2k+2} e^{-(k+1)s \rho \eta}(s\eta)^{3(k+1)}|\varphi_n|^{2k+2} \notag\\
&\leq C \int\int_{(0,T)\times\omega} \chi^{2k+2} e^{-(k+1)s \rho \eta}(s\eta)^{3(k+1)}|\varphi|^{2k+2}.\notag
\end{align}
The estimates \eqref{proofcarl4faible} and \eqref{proofcarl4fort} conclude the proof.
\end{proof}
\bibliographystyle{plain}
\small{\bibliography{bibliordnonlin}}

\end{document}